\newtheorem{theorem}{Theorem}
\newtheorem{definition}{Definition}[section]
\newtheorem{lemma}{Lemma}[section]
\newtheorem{proposition}{Proposition}[section]
\numberwithin{equation}{section}
\renewcommand{\Re}{\mbox{Re}}
\newcommand{\Ga}{\mbox{Ga}}
\newcommand{\We}{\mbox{We}}
\newcommand{\R}{\mathbb{R}}
\newcommand{\lowsub}[1]{\raisebox{-1.5ex}{\scriptsize $#1$}}
\newcommand{\eps}{\varepsilon}
\begin{document}

\title {\bf Technical report on a long-wave unstable thin
film equation with convection}

\author{{\normalsize\bf Marina Chugunova, M. C. Pugh, R. M. Taranets}
\smallskip}

\date{\today}

\maketitle

\setcounter{section}{0}

\begin{abstract}
In this technical report, we consider a nonlinear 4th-order
degenerate parabolic partial differential equation that arises in
modelling the dynamics of an incompressible thin liquid film on the
outer surface of a rotating horizontal cylinder in the presence of
gravity.  The parameters involved determine a rich variety of
qualitatively different flows. Depending on the initial data and the
parameter values, we prove the existence of nonnegative periodic
weak solutions. In addition, we prove that these solutions and their
gradients cannot grow any faster than linearly in time; there cannot
be a finite-time blow-up. Finally, we present numerical simulations
of solutions.
\end{abstract}

\textbf{2000 MSC:} {35K65, 35K35, 35Q35, 35G25, 35B40, 35B99,
35D05, 76A20}

\textbf{keywords:} {fourth-order degenerate parabolic equations,
thin liquid films, convection, rimming flows, coating flows}

\section{Introduction} \label{A}
We consider the dynamics of a viscous incompressible fluid on the
outer surface of a horizontal circular cylinder that is rotating
around its axis in the presence of gravity, see Figure
\ref{cartoon}.
\begin{figure}
\begin{center}
\vspace{-.5in}
\includegraphics[height=5cm] {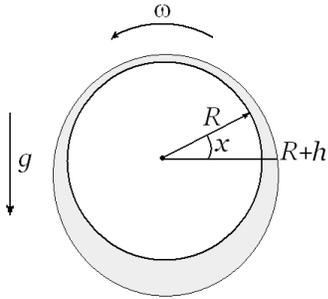}
\end{center}
\vspace{-.2in}
\caption{\label{cartoon}
Liquid film on the outer surface of a rotating
horizontal cylinder in the presence of gravity.
}
\end{figure}
If the cylinder is fully
coated there is only one free boundary: where the liquid meets the
surrounding air.  Otherwise, there
is also a free boundary (or contact line) where the air and liquid meet
the cylinder's surface.

The motion of the liquid film is governed by four physical effects:
viscosity, gravity, surface tension, and centrifugal forces. These
are reflected in the parameters: $R$ --- the radius of the cylinder,
$\omega$ --- its rate of rotation (assumed constant), $g$ --- the
acceleration due to gravity, $\nu$ --- the kinematic viscosity,
$\rho$ --- the fluid's density, and $\sigma$ --- the surface
tension.

These parameters yield three independent dimensionless numbers: the
Reynolds number $\Re = (R^2 \omega)/\nu$, the Galileo number $\Ga =
g/(R \omega^2)$ and the Weber number $\We = (\rho R^3
\omega^2)/\sigma$.

We introduce the parameter $\epsilon = \bar{h}/R$, where $\bar{h}$
is the average thickness of the liquid. The following quantities are
assumed to have finite, nonzero limits in the thin film ($\epsilon
\to 0$) limit  \cite{Pukh1, Pukh2, Benilov1, Moff}:

\begin{equation} \label{params}
\kappa = \Re\, \epsilon^2, \quad \chi =  \tfrac{\Re}{\We} \,
\epsilon^2, \quad \mbox{and} \quad  \mu = \Ga \, \Re \, \epsilon^2.
\end{equation}
This corresponds to a low rotation rate, for example.

One can model the flow using the full three-dimensional
Navier-Stokes equations with free boundaries: for $\vec{u}(x,y,z,t)$
in the region $x \in [-\pi,\pi)$, $y \in \R^1$, and $z \in
(0,h(x,y,t))$ where $x$ is the angular variable, $y$ is the axial
variable, and $h(x,y,t)$ is the thickness of the fluid above the
point $(x,y)$ on the surface of the cylinder at time $t$. This has
been done by Pukhnachov \cite{Pukh1} in which he considered the
physical regime for which the ratio of the free-fall acceleration
and the centripetal acceleration is small.  There, he proved the
existence and uniqueness of fully-coating steady states (no contact
line is present).  We know of no results for the affiliated initial
value problem.

In this physical regime, if one also makes a longwave approximation
(the thickness of the coating fluid is smaller than the radius of
the cylinder) and if one further assumes that the rotation rate is
low (or the viscosity is large) then the three-dimensional
Navier-Stokes equations with free boundary can be approximated by a
fourth-order degenerate partial differential equation (PDE) for the
film thickness $h(x,y,t)$. This is done by averaging  the fluid flow
in the direction normal to the cylinder \cite{Pukh1,Pukh2}. If one
further assumes that the flow is independent of the axial variable,
$y$, then this results in a PDE in one dimension for $h(x,t)$.

In his pioneering 1977 article about syrup rings on a rotating roller,
Moffatt neglected the effect of surface tension
(i.e. $\We^{-1}=0 = \chi$), assumed the flow was uniform in the
axial variable, and derived \cite{Moff} the following model for the
thin film thickness:
\begin{equation}
\label{A:MoffEq} h_t + \left( h - \tfrac{\mu}{3} h^3 \, \cos(x)
\right)_x = 0,
\end{equation}
where $\mu$ is given in (\ref{params}) and
$$
x \in [-\pi,\pi], \quad
t > 0,
\quad  h \text{ is } 2\pi \text{-periodic in } x.
$$
Pukhnachov's 1977 article \cite{Pukh1} gives the first model that
takes into account surface tension:
\begin{equation} \label{A:PukhEq}
h_t + ( h - \tfrac{\mu}{3} h^3 \, \cos(x) )_x + \tfrac{\chi}{3}\,
\left( h^3 \left(h_x + h_{xxx} \right) \right)_x = 0
\end{equation}
where $\mu$ and $\chi$ are given in (\ref{params}) and
$$
x \in [-\pi,\pi], \quad
t > 0,
\quad  h \text{ is } 2\pi \text{-periodic in } x.
$$
This model assumes a no-slip boundary condition at the liquid/solid
interface.  For a solution to (\ref{A:MoffEq}) or (\ref{A:PukhEq}) to
be physically relevant, either $h$ is strictly positive (the cylinder
is fully coated) or $h$ is nonnegative (the cylinder is wet in some
region and dry in others).

Surprisingly little is understood about the initial value problem
for (\ref{A:PukhEq}). Bernis and Friedman  \cite{B8} were the first
to prove the existence of nonnegative weak solutions for nonnegative
initial data for the related fourth-order nonlinear degenerate
parabolic PDE
\begin{equation} \label{lube}
h_t + (f(h)\,h_{xxx})_x = 0,
\end{equation}
where $f(h) = |h|^n\,f_0(h), \quad f_0(h) > 0, \quad n \geqslant 1$.

Unlike for second-order parabolic equations, there is no comparison
principle for equation (\ref{lube}).  Nonnegative initial data does
not automatically yield a nonnegative solution; indeed it may not
even be true for general fourth-order PDE (e.g. consider $h_t = -
h_{xxxx}$).  The degeneracy $f(h)$ in equation (\ref{lube}) is key
in ensuring that nonnegative solutions exist. Also, unlike for
second-order problems, it is possible that strictly positive initial
data might yield a solution that is zero at certain moments in time,
at certain locations in space.

Lower-order terms can be added to equation (\ref{lube}) to model
additional physical effects.  For example,
\begin{equation} \label{long_wave_stable}
h_t + (f(h)\,h_{xxx})_x  - (g(h) h_x)_x = 0
\end{equation}
where $g(h) > 0$ for $h \neq 0$.  Equation (\ref{long_wave_stable})
can model a thin liquid film on a horizontal surface with gravity
acting towards the surface. If this surface is not horizontal then
the dynamics can be modelled by
\begin{equation} \label{advect}
h_t +(h^n(a - b\,h_x + h_{xxx}))_x = 0, \ \  a > 0, \ \ b \geq 0
\end{equation}
The constant $a$ in the first-order term vanishes as the surface
becomes more and more horizontal. If the thin film of liquid is on a
horizontal surface with gravity acting away from the surface then
the thin film dynamics can be modelled by
\begin{equation} \label{long_wave_unstable}
h_t + (f(h)\,h_{xxx})_x  + (g(h) h_x)_x = 0.
\end{equation}
For a thorough review of the modelling of thin liquid films, see
\cite{Craster,Myers,O1}.

In equations (\ref{long_wave_stable}) and (\ref{advect}) the
second-order term is stabilizing: if one linearizes the equation about
a constant, positive steady state then the presence of the
second-order term increases how quickly perturbations decay in time.
In equation (\ref{long_wave_unstable}), the second-order term is
destabilizing: the linearized equation can have some long-wavelength
perturbations that grow in time.  For this reason, we refer to
equation (\ref{long_wave_unstable}) as ``long--wave unstable''.  The
long--wave stable equations (\ref{long_wave_stable}) and
(\ref{advect}) have similar dynamics as equation (\ref{lube}) however
the long-wave unstable equation (\ref{long_wave_unstable}) can have
nontrivial exact solutions and can have finite--time blow--up
($h(x^*,t) \uparrow \infty$ as $t \uparrow t^* < \infty$).

In all cases, the fourth-order term makes it harder to prove desirable
properties such as: the short--time (or long--time) existence of
nonnegative solutions given nonnegative initial data, compactly
supported initial data yielding compactly supported solutions (finite
speed of propagation), and uniqueness.  Indeed, there are
counterexamples to uniqueness of weak solutions \cite{B2}.  Results
about existence and long--time behavior for solutions of
(\ref{long_wave_stable}) can be found in \cite{B14}; analogous results
for (\ref{advect}) are in \cite{Gi}.  See \cite{B15,BertPughBlow} for
results about existence, finite speed of propagation, and finite--time
blow--up for equation (\ref{long_wave_unstable}).

In this paper we study the existence of  weak solutions of the thin
film equation
\begin{equation} \label{A:mainEq}
h_t  + \left({ |h|^3 (a_0 \, h_{xxx} + a_1 h_x + a_2 w'(x) ) }
\right)_x + a_3 h_x = 0
\end{equation}
where $a_1, \, a_2, \, a_3 $ are arbitrary constants, constant $a_0
> 0$, and $w(x)$ is periodic. Equation (\ref{A:PukhEq}) is a
special case of (\ref{A:mainEq}). The sign of $a_1$ determines
whether equation (\ref{A:mainEq}) is long--wave unstable. Also, the
coefficient of the convection term $a_2 (w'(x) |h|^3)_x$ can depend
on space and will change sign if $a_2 w'(x) \not\equiv 0$. The cubic
nonlinearity $|h|^3$ in equation (\ref{A:mainEq}) arises naturally
in models of thin liquid films with no-slip boundary conditions at
the liquid/solid interface. Our methods generalize naturally to
$f(h)=|h|^n$ for an interval of $n$ containing 3; we refer the
reader to \cite{B2,B8,BertPugh1996} for the types of results
expected.

Given nonnegative initial data that satisfies some reasonable
conditions, we prove long-time existence of nonnegative periodic
generalized weak solutions to the initial value problem for equation
(\ref{A:mainEq}). We start by using energy methods to prove
short-time existence of a weak solution and find an explicit lower
bound on the time of existence. A generalization and sharpening of
the method used in \cite{B15} allows us to prove that the $H^1$ norm
of the constructed solution can grow at most linearly in time,
precluding the possibility of a finite--time blow--up. This $H^1$
control, combined with the explicit lower bound on the (short) time
of existence, allows us to continue the weak solution in time,
extending the short-time result to a long-time result.

If $a_2=0$ or $a_3 = 0$ in equation (\ref{A:mainEq}) then solutions
will be uniformly bounded for all time.  If $a_2 \neq 0$ and $a_3
\neq 0$, it is natural to ask if the nonlinear advection term could
cause finite--time blow--up ($h(x^*,t) \uparrow \infty$ as $t
\uparrow t^*$ at some point $x^*$). Such finite-time blow-up is
impossible by the linear-in-time bound on $H^1$ but we have not
ruled out that a solution might grow in an unbounded manner as time
goes to infinity.

In \cite{BDGG, DGG}, the authors consider
the multidimensional analogue of (\ref{lube})
\begin{equation} \label{M1}
h_t  + \nabla \cdot \left( {|h|^n \nabla \Delta h} \right) = 0,
\end{equation}
for $h(x,t)$ where $x \in \Omega \subset \R^N$ with $N \in \{2,3\}$.
Depending on the sign of $A'$, if $g=0$ then equation
\begin{equation} \label{M2}
h_t  +
\nabla \cdot \left( {f(h)\nabla \Delta h + \nabla A(h)}
\right) = g(t,x,h,\nabla h)
\end{equation}
on $\Omega$
is the multidimensional analogue of equation (\ref{long_wave_stable})
or (\ref{long_wave_unstable}).
In \cite{D3}, the authors consider the long-wave
stable case with $g = 0$ and power-law coefficients, $f(h) = |h|^n$ and
$A'(h) = - |h|^m$.
In \cite{Gr}, the author considers the Neumann problem for
both the long-wave stable and unstable cases
with the assumption that $f(h) \geq 0$ has power-law-like behavior
near $h=0$, that
$|A'(h)|$ is dominated by $f(h)$ (specifically
$|A'(h)| \leqslant d_0 f(h) $ for some $d_0$), and that the source/sink term $g(t,x,h)$ grows no
faster than linearly in $h$.
In  \cite{T1,T2,T3}, the authors consider the Neumann problem for
the long-wave stable case of (\ref{M2}) with power-law coefficients
and a larger class of source
terms: $g(t,x,h) \sim |h|^{\lambda -1}h$ with $\lambda > 0$.
In \cite{T4,T5}, the same authors consider the long-wave stable equation with power-law
coefficients but with
$g(h) =  \vec{a} \cdot \nabla b(h)$ where $b(z) \sim z^{\lambda}$
and $\vec{a} \in \mathbb{R}^N$: $g$ models advective effects.  They
consider the problem both  on $\R^N$ and on a bounded domain $\Omega$.

All of these works on (\ref{M1}) and (\ref{M2}) construct nonnegative weak solutions from nonnegative
initial data and address qualitative questions such as dependence on exponents $n$ and
$m$ and $\lambda$, on dimension $N$, speed of propagation of the support and of
perturbations, exact asymptotics of the motion of the support, and positivity properties.
We note that the works \cite{T1,T2,T3,T4,T5} also construct ``strong'' solutions.

\section{Steady state solutions} \label{AA}

Smooth steady state solutions, $h(x,t) = h(x)$, of (\ref{A:PukhEq}) satisfy
\begin{equation} \label{chi_nonzero_ss}
h - \tfrac{\mu}{3} h^3 \, \cos(x)  + \tfrac{\chi}{3}\,
\left( h^3 \left(h_x + h_{xxx} \right) \right) = q
\end{equation}
where $q$ is a constant of integration that corresponds to the dimensionless mass flux.
In the zero surface tension case ($\chi = 0$), steady states satisfy
\begin{equation} \label{chi_zero_ss}
h - \tfrac{\mu}{3} h^3 \, \cos(x)  = q.
\end{equation}
Such steady states were first studied by Johnson \cite{John} and
Moffatt \cite{Moff}. Johnson proved that there are positive, unique,
smooth steady states if and only if the flux is not too large: $0 <
q < 2/(3 \sqrt{\mu})$. These steady-states are neutrally stable
\cite{OBrien}.

This critical value of $q$ had been first observed numerically by
Moffatt as a threshold between continuous and discontinuous (shock)
steady states. Evaluating (\ref{chi_zero_ss}) at $x = \pm \pi/2$,
one sees that this limit to the amount of fluid which can be
transferred per unit time corresponds to a limit to the thickness of
the fluid at the top (or bottom) of the cylinder.

Figure \ref{Moffatt} presents steady states for two fluxes. The
smooth curve corresponds a steady state with a flux smaller than
$2/(3\sqrt{\mu})$. As $q$ decreases to zero, the thickness of the
fluid on the left side of the cylinder decreases to zero.  As $q$
increases to $2/(3 \sqrt{\mu})$ the smooth maximum on the right side
of the cylinder becomes a corner, as shown in Figure \ref{Moffatt}.
Also, as shown, at this critical flux value there can be
discontinuous steady states.

\begin{figure}
\begin{center}
\vspace{-.5in}
\includegraphics[height=4cm] {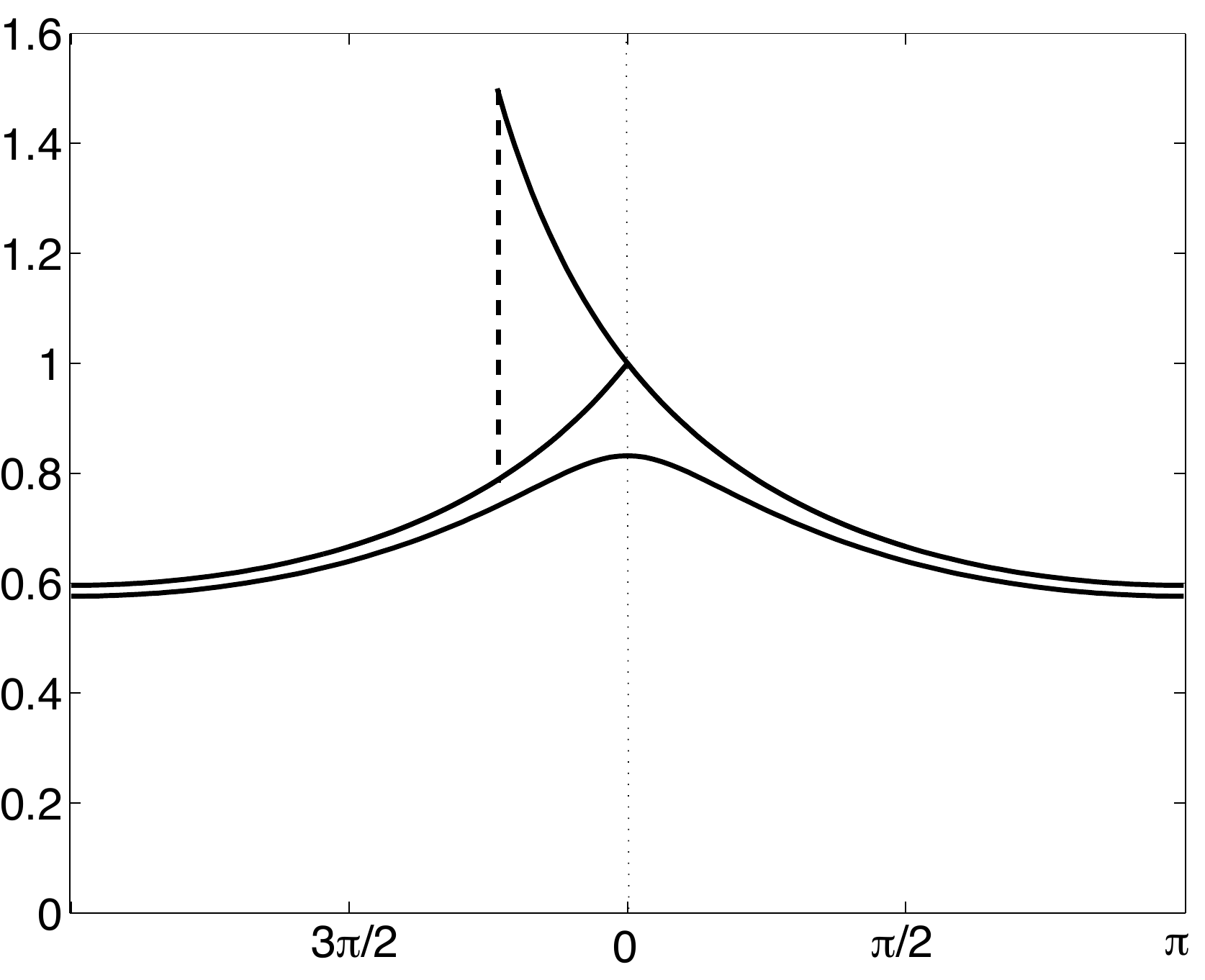}
\hspace{.3in}
\includegraphics[height=3.5cm] {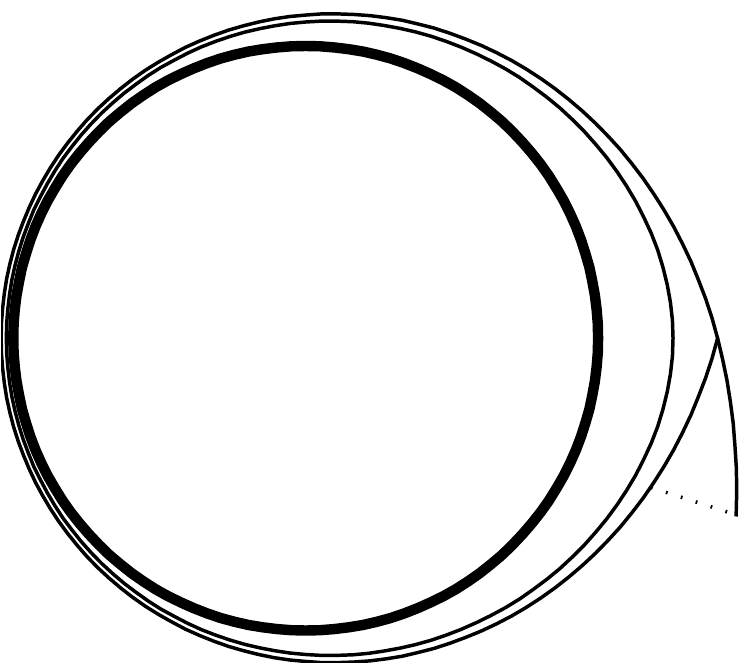}
\end{center}
\caption{\label{Moffatt} Three steady state solutions of equation
(\ref{chi_zero_ss}) with $\mu = 1$. Left: solutions plotted as
$(x,h(x))$. The smooth curve corresponds a steady state with flux $q
= 0.64$.   The discontinuous curve and the curve with a corner in it
correspond to the critical flux value $q = 2/3$. Right: solutions
plotted as a film coating a cylinder of radius $1$. The cylinder is
denoted with a heavy line. }
\end{figure}

Smooth, positive steady states in the presence of surface tension have been
studied by a number of authors.  One striking computational result \cite{Benilov3} is that
for certain values of $\chi$ and $\mu$ there can be non-uniqueness.
Specifically,
one can find flux values $q$ for which there are more than one steady state with
that flux (the steady states have different total mass).  Similarly, one can find total
masses for which there are more than one steady state (the steady states have
different flux).
These steady states were numerically discovered via an elegant combination of asymptotics
and a two-parameter (mass and flux) continuation method \cite[Figure 14]{Benilov3}.
To start the continuation method, earlier work \cite{Benilov1} on the regime in which
viscous forces dominate gravity was used.  There, asymptotics show that for small
fluxes the steady state is close to
$q + 1/3 q^3 \cos(x) + \mathcal{O}(q^5)$, providing a good first guess for the
iteration used to find the steady state.
The bifurcation diagram shown in Figure 14 of \cite{Benilov3} also
suggests that the Moffatt model (\ref{A:MoffEq}) can be considered
as the limit of the Pukhnachov model (\ref{A:PukhEq}) as surface
tension goes to zero ($\chi \to 0$).

We are not aware of a result that proves that smooth positive steady
states exist if and only if $0<q<q^*(\mu)$ for some $q^*(\mu)$.
Pukhnachov proved \cite{Pukh3} a nonexistence result: no positive
steady states exist if $q > 2 \sqrt{3/\mu} \simeq 3.464/\sqrt{\mu}$.
We improve this, proving that no such solution exists if $q > 2/3 \,
\sqrt{2/\mu} \simeq 0.943/\sqrt{\mu}$.
\begin{proposition}\label{Prop2.1}
There does not exist a strictly positive $2 \pi$ periodic solution
$h(x)$ of equation (\ref{chi_nonzero_ss}) if $q > 2/3 \,
\sqrt{2/\mu}$.
\end{proposition}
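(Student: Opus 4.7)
The plan is to derive two integral identities for any strictly positive $2\pi$-periodic solution $h$, combine them into a single relation, and then apply Cauchy--Schwarz to bound $q$ from above. Periodicity of $h$ is used throughout to kill boundary terms.

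First, divide the steady-state equation (\ref{chi_nonzero_ss}) by $h^3$ to obtain
\[
h^{-2} - \tfrac{\mu}{3}\cos x + \tfrac{\chi}{3}(h_x + h_{xxx}) = q\,h^{-3}.
\]
Integrating on $[-\pi,\pi]$ gives $\int h^{-2}\,dx = q\int h^{-3}\,dx$, since the other terms are either a derivative of a periodic function or the integral of $\cos x$ over a full period. Multiplying the displayed identity by $\cos x$ and integrating produces a second relation: two integrations by parts show that $\int(h_x+h_{xxx})\cos x\,dx = 0$, so the surface-tension contribution vanishes, while $(\mu/3)\int\cos^2 x\,dx = \mu\pi/3$, yielding
\[
\int h^{-2}\cos x\,dx = q\int h^{-3}\cos x\,dx + \tfrac{\mu\pi}{3}.
\]
Adding the two identities and writing $P := \int h^{-2}(1+\cos x)\,dx$ produces the key relation
\[
q\int h^{-3}(1+\cos x)\,dx = P - \tfrac{\mu\pi}{3}. \qquad (\star)
\]

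Next I apply Cauchy--Schwarz twice with respect to the nonnegative weight $d\nu = (1+\cos x)\,dx$, which has total mass $2\pi$ on $[-\pi,\pi]$:
\[
P^2 \;\le\; \Bigl(\int h^{-1}\,d\nu\Bigr)\Bigl(\int h^{-3}\,d\nu\Bigr), \qquad \Bigl(\int h^{-1}\,d\nu\Bigr)^{\!2} \;\le\; 2\pi\,P.
\]
Using $(\star)$ to eliminate $\int h^{-3}\,d\nu$ and then bounding $\int h^{-1}\,d\nu$ by $\sqrt{2\pi P}$ gives
\[
qP^2 \;\le\; \sqrt{2\pi P}\,\Bigl(P - \tfrac{\mu\pi}{3}\Bigr),
\qquad\text{equivalently}\qquad \tfrac{\mu\pi}{3} \;\le\; P - \tfrac{q}{\sqrt{2\pi}}\,P^{3/2}.
\]
Maximizing the right-hand side over $P>0$ (the critical point is $P^* = 8\pi/(9q^2)$, with maximum value $8\pi/(27q^2)$) forces $\mu\pi/3 \le 8\pi/(27q^2)$, that is $q \le (2/3)\sqrt{2/\mu}$, contradicting the hypothesis.

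The substantive obstacle is the discovery of the right test function in the second identity. The factor $h^{-3}$ is exactly what is needed to cancel the cubic nonlinearity in front of the surface-tension term, reducing it to $\chi(h_x + h_{xxx})$, so that pairing with $\cos x$ and integration by parts eliminates the fourth-order contribution completely; at the same time, the factor $\cos x$ extracts the constant $\mu\pi/3$ that drives the whole estimate. Without this simultaneous cancellation no usable identity involving $\mu$ but not $\chi$ could be written down. Once $(\star)$ is in hand, the remaining Cauchy--Schwarz optimization is routine, but it is sharp: it yields exactly the threshold stated in the proposition.
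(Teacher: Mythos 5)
Your proof is correct, and although it begins from the same orthogonality identities as the paper, the estimation step is genuinely different. The paper's proof substitutes $y=h/q$, derives the same two relations (orthogonality of the right-hand side to $1$ and to $\cos x$), and then combines them into $\pi\beta=\int (1/y^2-1/y^3)(1+\cos x)\,dx$ with $\beta=q^2\mu/3$; it finishes by bounding the integrand pointwise, since $F(y)=y^{-2}-y^{-3}\le 4/27$ for all $y>0$ (with $F\le 0$ on $(0,1]$), giving $\pi\beta\le (4/27)\cdot 2\pi$. Your relation $(\star)$, $q\int h^{-3}\,d\nu = P-\mu\pi/3$, is exactly the same identity after unraveling $y=h/q$, but you estimate it by applying Cauchy--Schwarz twice with weight $d\nu=(1+\cos x)\,dx$ to get $\int h^{-3}\,d\nu\ge P^{3/2}/\sqrt{2\pi}$, and then optimize the resulting scalar inequality $\mu\pi/3\le P-qP^{3/2}/\sqrt{2\pi}$ over $P>0$. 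Both methods yield the identical sharp constant $q\le \tfrac{2}{3}\sqrt{2/\mu}$ (equality in both corresponds to constant $h$), and both handle the $\chi=0$ case automatically. The paper's pointwise bound on $F$ is somewhat shorter; your Cauchy--Schwarz/optimization route is a bit longer but dispenses entirely with the need to notice that the function $F$ changes sign at $y=1$ and to restrict the integration set, which is the one non-obvious step in the paper's final estimate. One small remark: before bounding $\int h^{-1}\,d\nu$ by $\sqrt{2\pi P}$ you implicitly use $P-\mu\pi/3\ge 0$, which indeed holds because $P-\mu\pi/3=q\int h^{-3}\,d\nu>0$; it is worth stating this explicitly so the inequality direction is visibly preserved.
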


\begin{proof}[Proof of Proposition~\ref{Prop2.1}]
Following Pukhnachov, we start by rescaling the flux to $1$ by
introducing $y(x) = h(x)/q$ and introducing the parameters $\gamma =
\tfrac{\chi \, q^3}{3}$ and $\beta = \tfrac{q^2 \mu}{3}$. Equation
(\ref{chi_nonzero_ss}) transforms to
\begin{equation}\label{AA:1}
\gamma (y''' + y') = \beta \cos{(x)} - \tfrac{1}{y^2} +
\tfrac{1}{y^3}.
\end{equation}
The solution $y$ is written as
\begin{equation}
y(x) = a_0 + a_1 \cos(x) + a_2 \sin(x) + v(x)
\end{equation} where $v(x) \perp \mbox{span}\{ 1, \cos(x), \sin(x) \}$
and satisfies
\begin{equation} \label{AA:2}
\gamma (v''' + v') = \beta \cos{(x)} - \tfrac{1}{y(x)^2} +
\tfrac{1}{y(x)^3}.
\end{equation}
A solution $v$ exists only if the right-hand side of (\ref{AA:2}) is
orthogonal to $\mbox{span}\{ 1, \cos(x), \sin(x) \}$.  As a result,
\begin{equation} \label{AA:2=3}
\int\limits_0^{2 \pi} \left( \tfrac{1}{y(x)^2} - \tfrac{1}{y(x)^3}
\right)
\; dx = 0
\end{equation}
\begin{equation}\label{AA:cos=beta}
\int\limits_0^{2 \pi}
\left( \tfrac{1}{y(x)^2} - \tfrac{1}{y(x)^3} \right)
\cos(x) \; dx = \pi \, \beta,
\end{equation}

Adding equations (\ref{AA:2=3}) and (\ref{AA:cos=beta}) yields
\begin{align*}
\pi \beta & =
\int\limits_0^{2 \pi}
\left( \tfrac{1}{y(x)^2} - \tfrac{1}{y(x)^3} \right)
\left( 1 + \cos(x) \right) \; dx \\
& \hspace{.5in}
\leq
\int\limits_{y \geq 1}
\left( \tfrac{1}{y(x)^2} - \tfrac{1}{y(x)^3} \right)
\left( 1 + \cos(x) \right) \; dx.
\end{align*}
The function $F(y) = 1/y^2 - 1/y^3$ is bounded above by $4/27$ on $[1,\infty)$ hence
$$
\pi \beta \leq
\int\limits_{y \geq 1} \tfrac{4}{27} \left( 1 + \cos(x) \right) \; dx \leq \tfrac{4}{27} \, 2 \pi.
$$
This shows that if there is a steady state then $\beta \leq 8/27$.    Recalling the
definition of $\beta$, there is no steady state if $q > 2/3 \, \sqrt{2/\mu}$.
\end{proof}

The proof also holds in the case of zero surface tension $\chi =
\gamma = 0$ and so it is natural that the bound $2/3 \, \sqrt{2/\mu}$
is larger than $2/(3 \sqrt{\mu})$ (the bound found by Johnson and
Moffatt.)  Also, we note that numerical simulations that suggest
nonexistence of a positive steady state if $q > 0.854$ when $\mu = 1$
for a large range of surface tension values \cite[p. 61]{Kar}; our bound of
$0.943$ is not too far off from this.
We close the discussion of steady states by considering their
nonlinear stability.  This is done via simulations of the initial
value problem for different regimes of the PDE.  Figure
\ref{no_advect} considers the PDE with no advection, $h_t + (h^3 (
h_{xxx} + 16\, h_x))_x = 0$.  The PDE is translation invariant in
$x$ and constant steady states are linearly unstable.  As a result,
any non-constant behaviour observed in a solution starting from
constant initial data would be due to growth of round-off error. For
this reason, non-constant initial data is chosen: $h_0(x) = 0.3 +
0.02 \, \cos(x) + 0.02 \, \cos(2 x)$. The $L^2$ and $H^1$ norms of
the resulting solution appear to be converging to limiting values as
time passes and long-time limit of the solution appears to be four
steady-state droplets of the form $a \cos(4 x + \phi) + b$ for
appropriate values of $a$, $\phi$, and $b$. Like the PDE, the
simulation shown respects the symmetry about $x=\pi$ of the initial
data. However, we find that if one computes longer, the symmetry is
broken and the solution appears to converge to a profile with three
steady droplets.  This suggests that the four droplet configuration
may be a steady state but it's an unstable one and accumulated
round-off error eventually leads the numerical solution away from
it.

Figure \ref{no_linear_advect} shows the evolution from constant initial data
for the PDE
with nonlinear advection but no linear advection:
$h_t + (h^3 ( h_{xxx} + 16\, h_x + 8 \cos(x)))_x = 0$.  The long-time limit appears
to be a steady state which is
zero (or nearly zero\footnote{The solutions shown have a very thin
film of liquid (of order $10^{-4}$) in the apparently dry region.}) on $[0,\pi]$ with the bulk of the fluid contained in a
droplet supported within $(\pi,2\pi)$, centred roughly  about the bottom
of the cylinder ($x=3\pi/2$).
Finally, Figure \ref{with_advect} shows the evolution resulting from the same constant
initial data for the PDE
with both linear and nonlinear advection:
$h_t + (h^3 ( h_{xxx} + 16\, h_x + 8 \cos(x)))_x + 3 h_x= 0$.
The long-time limit appears to be fully wetted cylinder with a steady ``droplet''
centred slightly past the bottom of the cylinder (here ``past'' refers to the direction
determined by the direction of rotation $\omega$; see Figure \ref{cartoon}).

We close by noting that the PDE considered in Figure
\ref{with_advect} corresponds to coefficient $a_3 = 3$ in the PDE
(\ref{A:mainEq}).  As we increase the value of $a_3$ we find there
appears to be a critical value past which the solution appears to
converge to a time-periodic behaviour rather than a steady state.
Specifically, a ``thumping'' behaviour is observed in which the
cylinder is fully wetted but the bulk of the fluid is located in one
region. This bulk of fluid moves around the rotating cylinder in a
time-periodic manner.

\begin{figure}
\begin{center}
\includegraphics[height= 4.5cm] {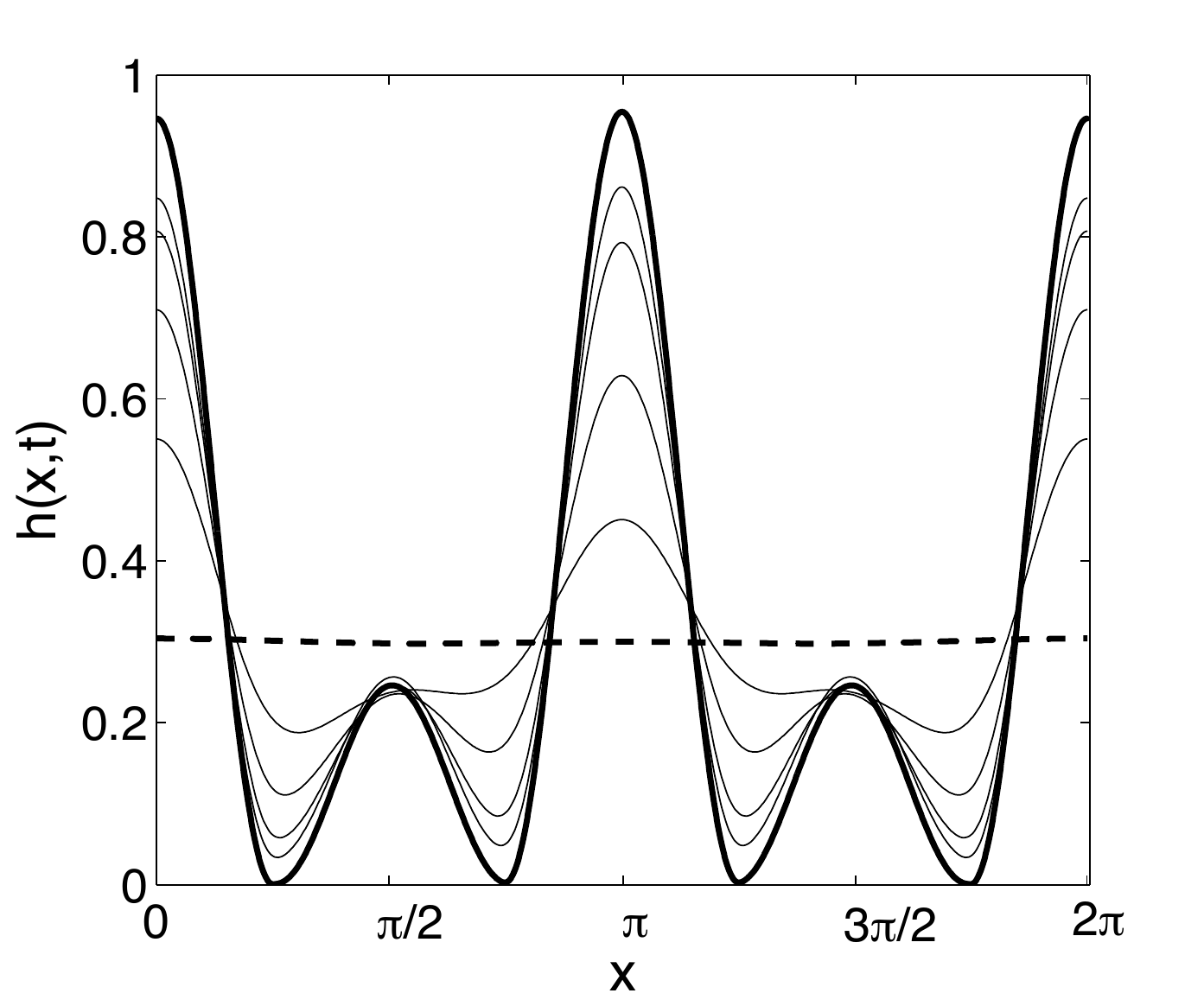}
\includegraphics[height= 4.5cm] {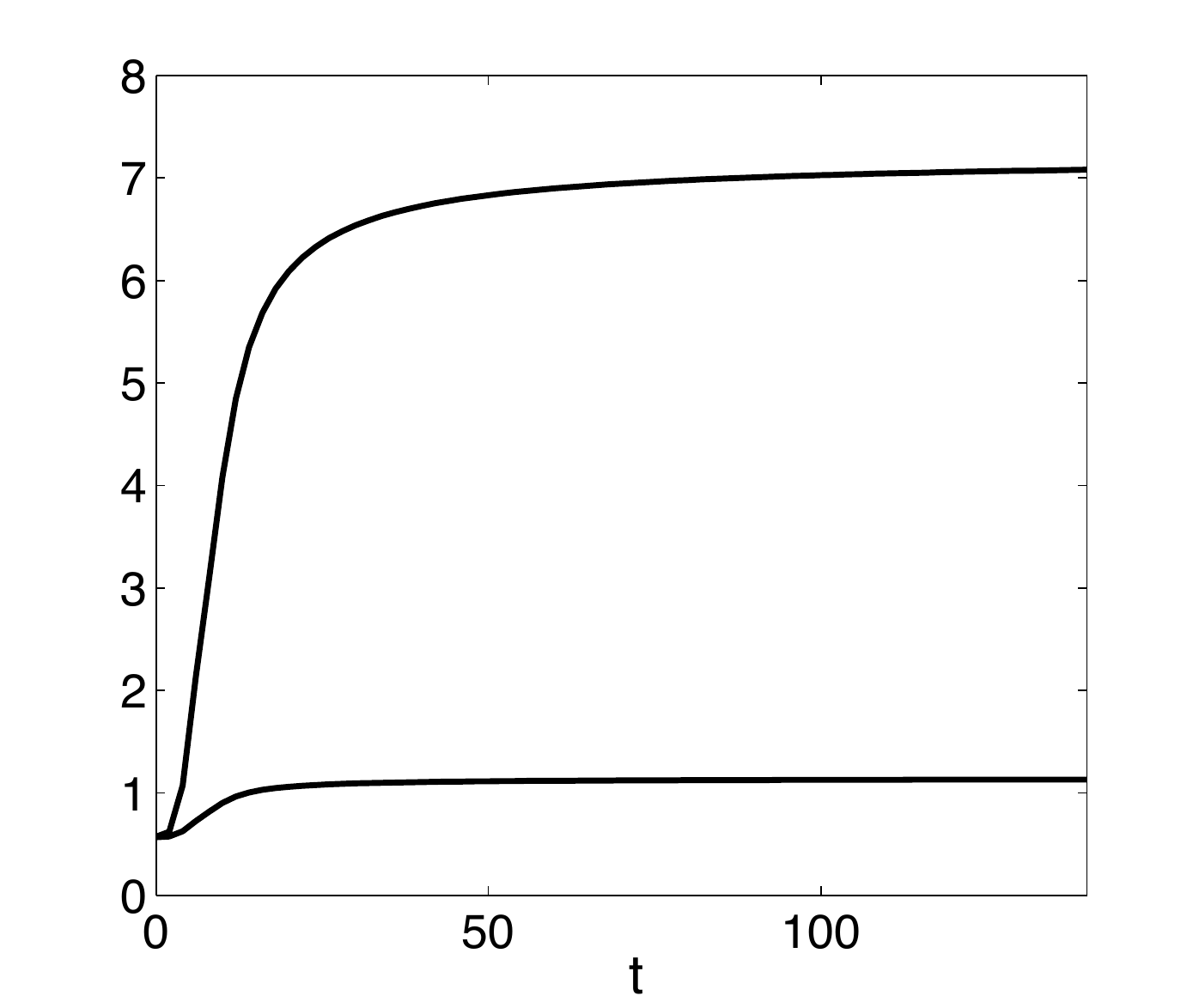}
\end{center}
\vspace{-.4in}
\caption{\label{no_advect}
The evolution equation with no linear or nonlinear advection,
$h_t +
(h^3 ( h_{xxx} + 16\, h_x))_x = 0$, corresponding to
$a_0 = 1$, $a_1 = 16$, and $a_2 = a_3 = 0$.  The
initial data is $h_0(x) = 0.3 + 0.02 \, \cos(x) + 0.02 \, \cos(2 x)$.  Left plot: the solution at times
$t=0$ (dashed line), $t = 12, 12.5, 13, 15$ (solid lines), and
$t=140$ (heavy line).  Right plot: the $L^2$ and $H^1$ norms plotted
as a function of time.
}
\end{figure}

\begin{figure}
\begin{center}
\includegraphics[height=4.5cm] {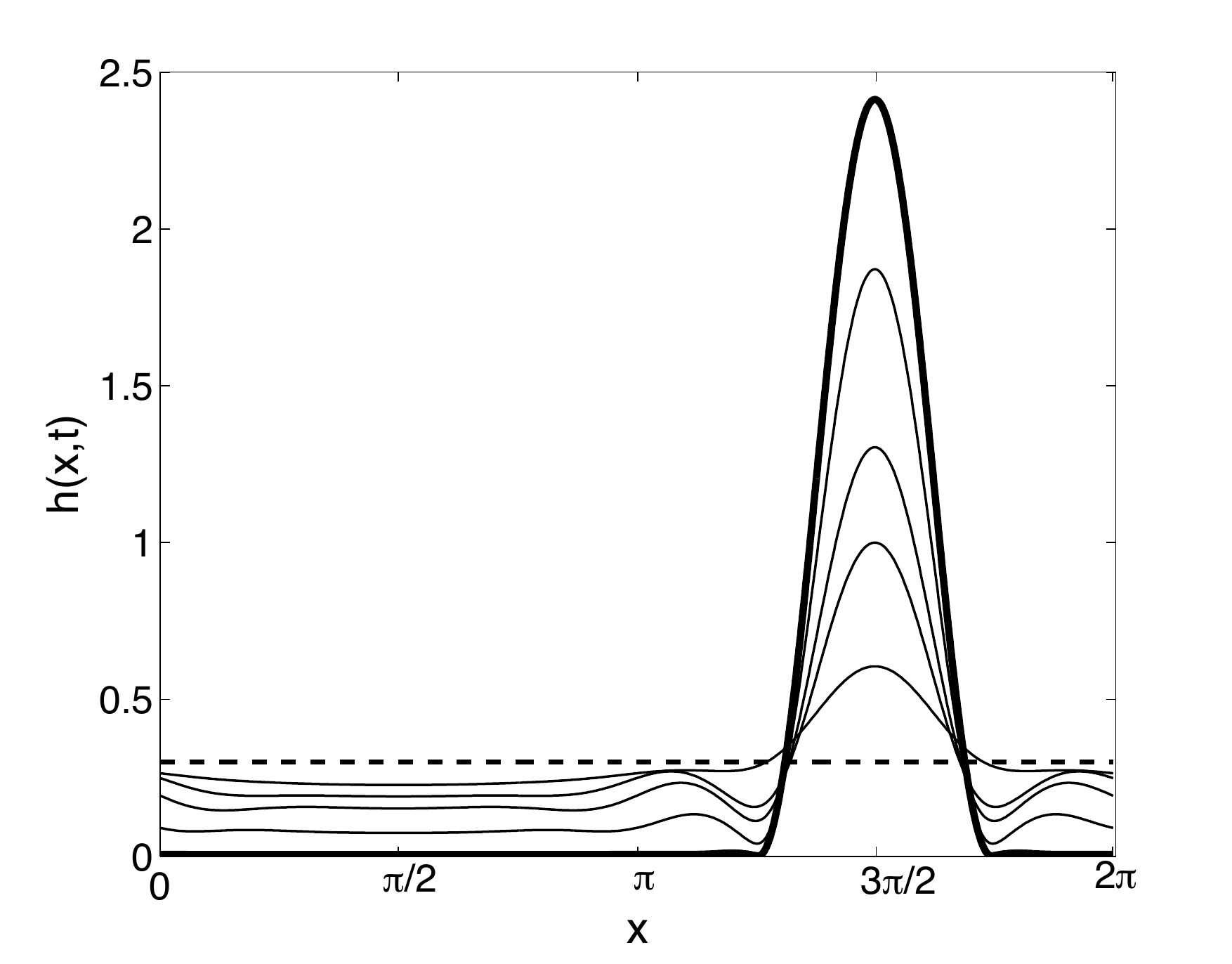}
\includegraphics[height=4.5cm] {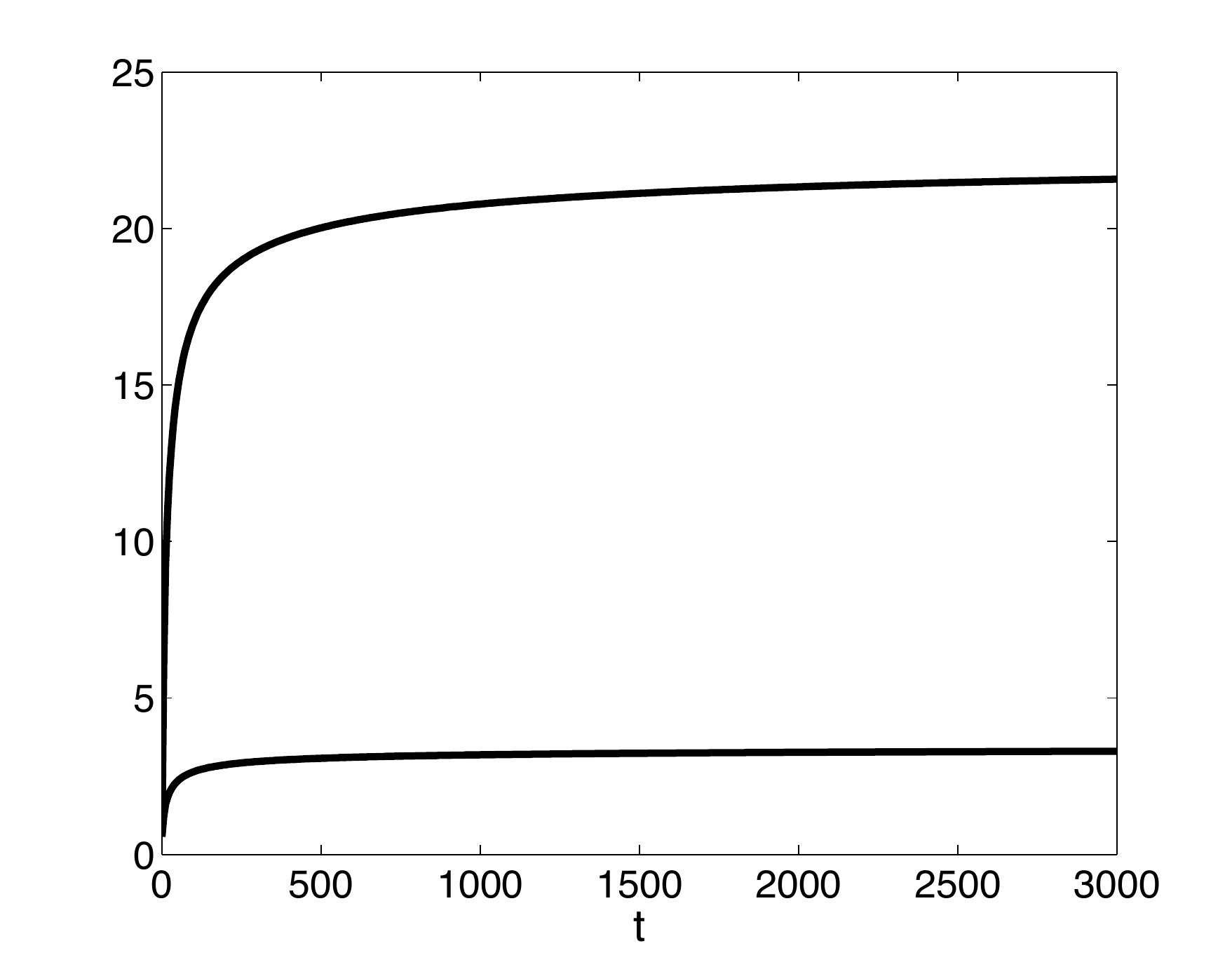}
\end{center}
\vspace{-.4in}
\caption{\label{no_linear_advect}
The evolution equation with nonlinear advection but no linear advection,
$h_t + (h^3 ( h_{xxx} + 16\, h_x + 8 \cos(x)))_x = 0$, corresponding to
$a_0 = 1$, $a_1 = 16$, $a_2 = 8$, and $a_3 = 0$.  The
initial data is $h_0(x) = 0.3$.
Left plot: the solution at  times
$t=0$ (dashed line), $t = 0.5, 1, 2, 10$ (solid lines), and
$t=3000$ (heavy line).  Right plot: the $L^2$ and $H^1$ norms plotted
as a function of time.
}
\end{figure}

\begin{figure}
\begin{center}
\includegraphics[height=4.5cm] {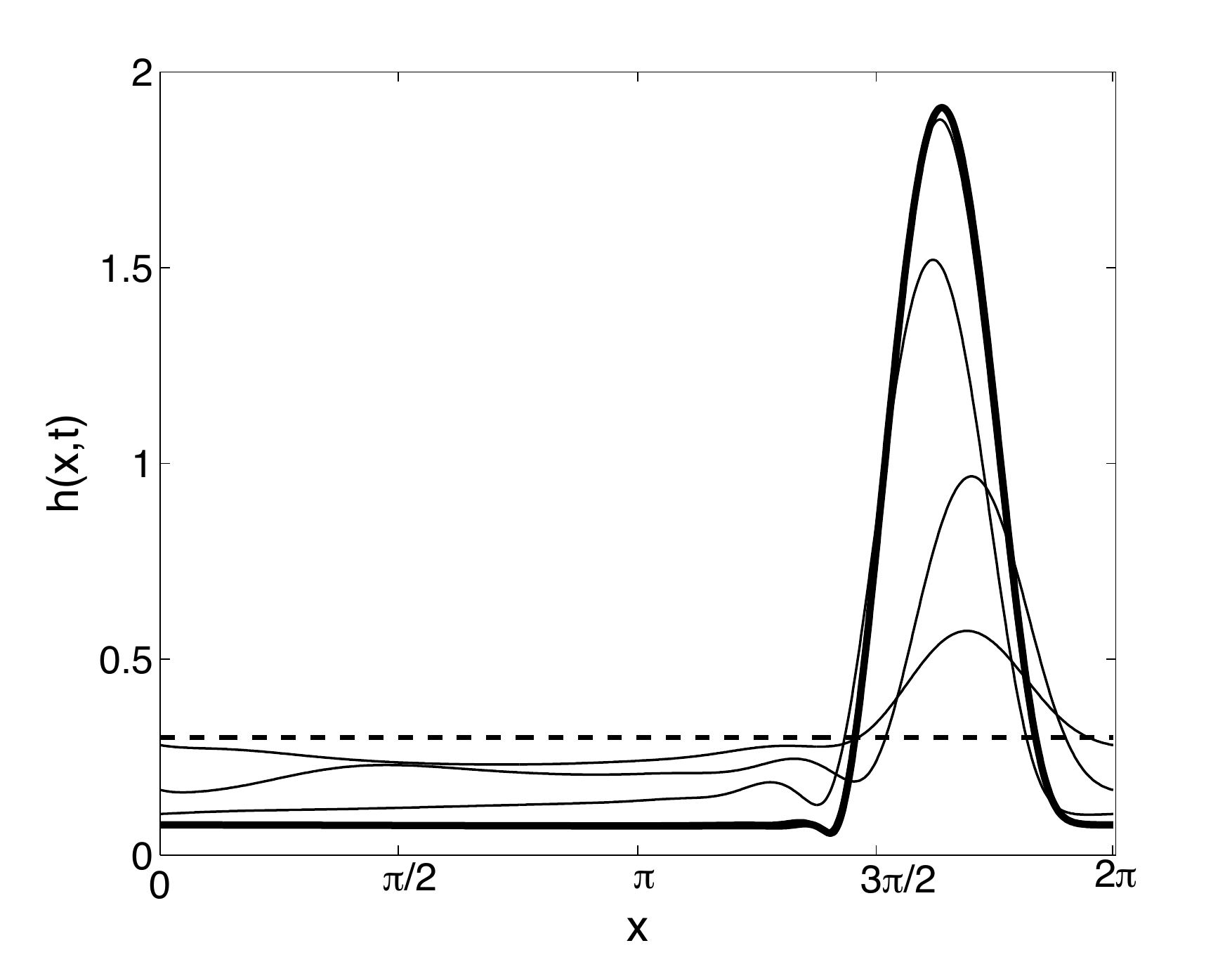}
\includegraphics[height=4.5cm] {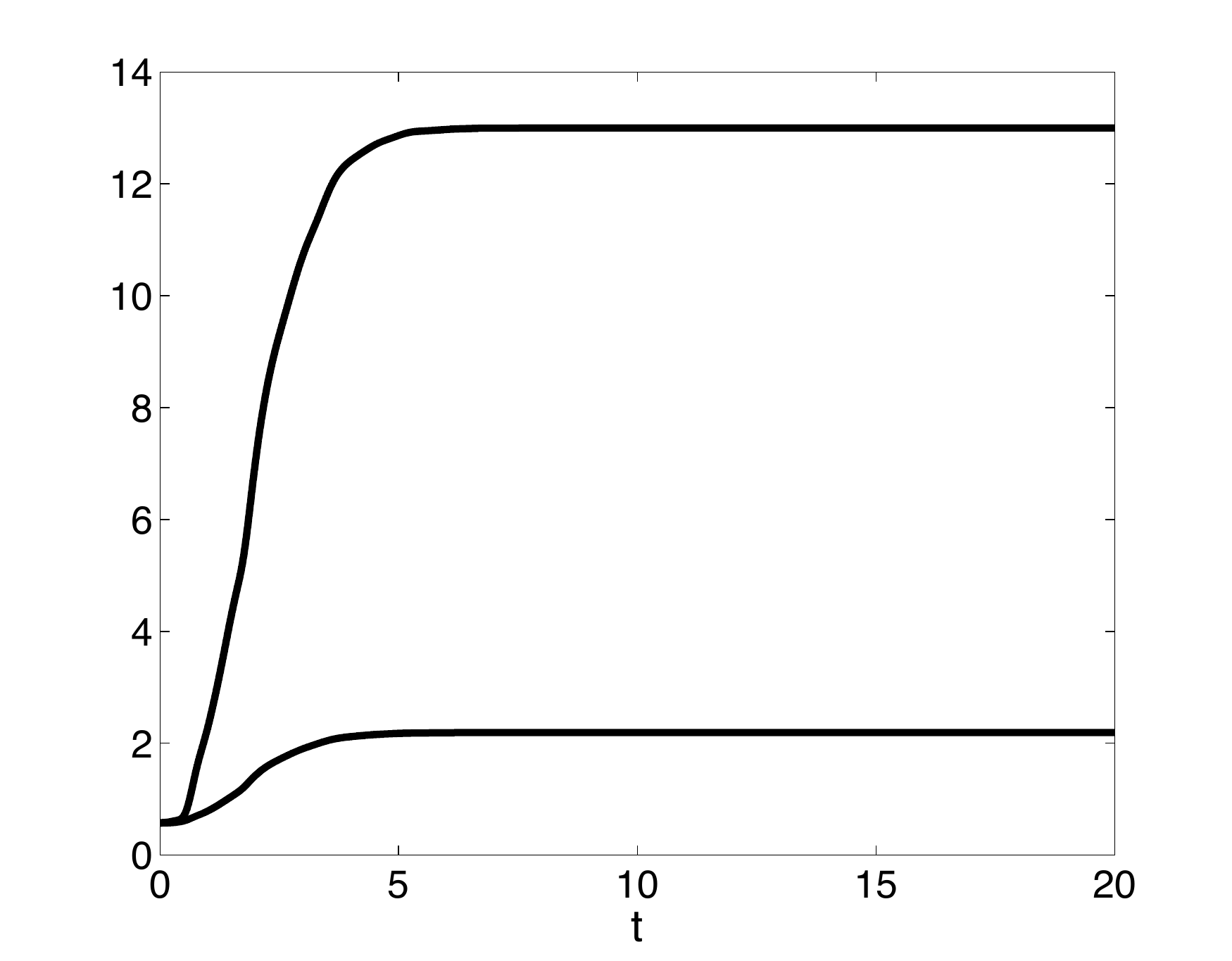}
\end{center}
\vspace{-.4in}
\caption{\label{with_advect}
The evolution equation with both linear and nonlinear advection,
$h_t + (h^3 ( h_{xxx} + 16\, h_x + 8 \cos(x)))_x + 3 h_x= 0$, corresponding to
$a_0 = 1$, $a_1 = 16$, $a_2 = 8$, and $a_3 = 3$.  The
initial data is $h_0(x) = 0.3$.
Left plot: the solution at times
$t=0$ (dashed line), $t =0.5,
1, 2, 4$ (solid lines), and
$t=20$ (heavy line).  Right plot: the $L^2$ and $H^1$ norms plotted
as a function of time.
}
\end{figure}

\section{Short--time Existence and Regularity of Solutions} \label{B}

We are interested in the existence of nonnegative generalized weak
solutions to the following initial--boundary value problem:
\begin{numcases}
{(\textup{P})}
h_t  + \left({ f(h) (a_0h_{xxx} + a_1 h_x + a_2 w'(x) ) } \right)_x + a_3 h_x = 0 \text{ in }Q_T, \qquad \label{B:1} \\
\tfrac{\partial^{i} h}{\partial x^i}(-a,t) = \tfrac{\partial^{i}h}{\partial x^i}(a,t) \text{ for }t > 0,\,i=\overline{0,3}, \label{B:2}  \\
h(x,0) = h_0 (x) \geqslant 0, \label{B:3}
\end{numcases}
where $f(h) = |h|^3$, $h= h(x,t)$, $\Omega = (-a, a)$, and $Q_T =
\Omega \times (0,T)$. Note that rather than considering the interval
$(-a,a)$ with boundary conditions (\ref{B:2}) one can equally well
consider the problem on the circle $S^1$; our methods and results
would apply here too. Recall that $a_1$, $a_2$, and $a_3$ in
equation (\ref{B:1}) are arbitrary constants; $a_0$ is required to
be positive.  The function $w$ in (\ref{B:1}) is assumed to satisfy:
\begin{equation}\label{B:w}
w \in C^{2+\gamma}(\Omega) \; \mbox{for some} \; 0 < \gamma < 1,
\tfrac{\partial^{i} w}{\partial x^i}(-a) =
\tfrac{\partial^{i}w}{\partial x^i}(a) \text{ for }i= 0, 1, 2.
\end{equation}
We consider a generalized weak solution in the following sense \cite{B2,B6}:

\begin{definition}\label{B:defweak}
A generalized weak solution of problem $(\textup{P})$ is a function
$h$ satisfying
\begin{align}
& \label{weak1}
h \in C^{1/2,1/8}_{x,t}(\overline{Q}_T) \cap L^\infty (0,T; H^1(\Omega )),\\
& \label{weak-d} h_t \in L^2(0,T; (H^1(\Omega))'),\\
& \label{weak2} h \in C^{4,1}_{x,t}(\mathcal{P}), \,\,\, \sqrt{f(h)}
\, \left( a_0 h_{xxx} + a_1 h_x + a_2 w' \right) \in
L^2(\mathcal{P}), \,\,
\end{align}
where $\mathcal{P} = \overline{Q}_T \setminus ( {h=0} \cup {t=0})$
and $h$ satisfies (\ref{B:1}) in the following sense:
\begin{align}\notag
& \int\limits_0^T \langle h_t(\cdot,t), \phi \rangle \; dt -
\iint\limits_{\mathcal{P}}
{f(h) ( a_0h_{xxx} + a_1 h_x + a_2 w'(x))\phi_x\,dx dt } \\
& \hspace{2.5in} - a_3\iint\limits_{Q_{T}} {h \phi_x\,dx dt} = 0
\label{integral_form}
\end{align}
for all $\phi \in C^1(Q_T)$ with $\phi(-a,\cdot) = \phi(a,\cdot)$;
\begin{align}
&  \label{ID1} h(\cdot,t) \to h(\cdot,0) = h_0
\mbox{  pointwise \& strongly in $L^2(\Omega)$ as $t \to 0$}, \\
& \label{BC1} h(-a,t)=h(a,t) \; \forall t \in [0,T] \; \mbox{and} \;
\tfrac{\partial^{i}
h}{\partial x^i}(-a,t) = \tfrac{\partial^{i}h}{\partial x^i}(a,t)  \\
& \notag \mbox{for} \; i = \overline{1,3} \; \mbox{at all points of
the lateral boundary where $\{h \neq 0 \}$.}
\end{align}
\end{definition}

Because the second term of (\ref{integral_form}) has an integral
over $\mathcal{P}$ rather than over $Q_T$, the generalized weak
solution is ``weaker'' than a standard weak solution. Also note that
the first term of (\ref{integral_form}) uses $h_t \in L^2(0,T;
(H^1(\Omega))' )$; this is different from the definition of weak
solution first introduced by Bernis and Friedman \cite{B8}; there,
the first term was the integral of $h \phi_t$ integrated over $Q_T$.

We first prove the short-time existence of a generalized weak
solution and then prove that it can have additional regularity.  In
Section \ref{G} we prove additional control for the $H^1$ norm which
then allows us to prove long-time existence.

\begin{theorem}[Existence]\label{C:Th1}
Let the nonnegative initial data $h_0 \in H^1(\Omega)$ satisfy
\begin{equation}\label{C:inval}
\int\limits_{\Omega} {\tfrac{1}{h_0(x)}} \; dx < \infty,
\end{equation}
and either 1) $h_0(-a) = h_0(a) = 0$ or  2) $h_0(-a) = h_0(a) \neq
0$ and $ \tfrac{\partial^{i} h_0}{\partial x^i}(-a) =
\tfrac{\partial^{i}h_0}{\partial x^i}(a) \text{ holds for }i=1,2,3$.
Then for some time $T_{loc}>0$ there exists a nonnegative
generalized weak solution, $h$, on $Q_{T_{loc}}$ in the sense of the
definition \ref{B:defweak}.  Furthermore,
\begin{equation} \label{Linf_H2}
h \in L^2(0,T_{loc};H^2(\Omega)).
\end{equation}

Let
\begin{equation} \label{Energy}
\mathcal{E}_0(T) := \tfrac{1}{2}\int\limits_{\Omega} ({  a_0
h_x^2(x,T) - a_1 h^2(x,T) - 2a_2 w(x) h(x,T)) \,dx},
\end{equation}
and
$$
B_0(T):= 2\left(\tfrac{a_1^2}{a_0} + \tfrac{a_2^2}{a_0} \mathop \|
w' \|_\infty^2 \right) \int \limits_0^T{
\|h(\cdot,t)\|^3_{L^{\infty}(\Omega)}\,dt}.
$$
then the weak solution satisfies
\begin{equation}\label{C:d2'}
\mathcal{E}_0(T_{loc}) +
\iint\limits_{\{h >0 \}}
{h^3 (a_0 h_{xxx} +
a_1 h_x  + a_2 w')^2 \,dx \, dt} \leqslant \mathcal{E}_0(0) + K\, T_{loc},
\end{equation}
and
\begin{equation}\label{C:a2-new}
\int\limits_{\Omega} {h_{x}^2(x,T_{loc}) \,dx} \leqslant e^{B_0(T_{loc} )}\int
\limits_{\Omega} {h_{0x}^2(x) \,dx}
\end{equation}
where $K = |a_2a_3| \,  \| w' \|_\infty C < \infty$. The time of
existence, $T_{loc}$, is determined by $a_0$, $a_1$, $a_2$, $\| w'
\|_2$, $\| w' \|_\infty$, $| \Omega |$, $\int h_0$, $\| h_{0x}
\|_2$, and $\int 1/h_0$.
\end{theorem}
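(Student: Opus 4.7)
\smallskip

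\noindent\emph{Proof proposal.} My plan is a Bernis--Friedman style regularization argument adapted to handle the lower-order terms $a_1 h_x$, $a_2 w'(x)$, and $a_3 h_x$. First I would replace $f(h)=|h|^3$ by $f_\delta(h)=|h|^3+\delta$ and mollify the initial datum to $h_{0,\delta}\in C^{4+\gamma}$ with $\inf h_{0,\delta}\geq\sigma(\delta)>0$, $h_{0,\delta}\to h_0$ in $H^1(\Omega)$, and $\int 1/h_{0,\delta}\to\int 1/h_0$. The regularized problem is uniformly fourth-order parabolic, so Solonnikov/Eidelman-type quasilinear parabolic theory (or a Schauder-based contraction-mapping argument) produces a unique classical solution $h_\delta\in C^{4+\gamma,1+\gamma/4}(\overline{Q}_{\tau_\delta})$ that remains strictly positive on its interval of classical existence.

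The main analytical work is deriving the three $\delta$-uniform estimates (\ref{C:d2'}), (\ref{C:a2-new}), (\ref{Linf_H2}) together with a Bernis--Friedman entropy bound. For (\ref{C:d2'}) I would test the equation against $-a_0 h_{xx}-a_1 h-a_2 w$, the Fr\'echet derivative of $\mathcal E_0$; periodicity kills the $a_0 a_3$ and $a_1 a_3$ contributions of $a_3 h_x$, the leading part produces the dissipation $\int f(h)(a_0 h_{xxx}+a_1 h_x+a_2 w')^2\,dx$, and the only residual term is $-a_2 a_3\int w' h\,dx$, which mass conservation bounds by $|a_2 a_3|\,\|w'\|_\infty\int h_0$, yielding (\ref{C:d2'}). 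For (\ref{C:a2-new}) I would differentiate $\tfrac12\|h_x\|_2^2$: integration by parts produces $-a_0\int f(h)h_{xxx}^2$ plus a mixed term in $a_1 h_x+a_2 w'$ that Cauchy--Young absorbs into the dissipation, leaving $\|h\|_\infty^3$ times a quadratic form in $h_x$, which Gronwall integrates to (\ref{C:a2-new}). A Bernis--Friedman entropy $G_\delta$ with $G_\delta''=1/f_\delta$ tested against the equation then controls $\int 1/h_\delta$ uniformly and, after absorbing lower-order terms, yields $h_\delta\in L^2(0,T_{loc};H^2)$, i.e.\ (\ref{Linf_H2}).

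These estimates give uniform control of $h_\delta$ in $L^\infty(0,T_{loc};H^1)\cap L^2(0,T_{loc};H^2)$ and of $h_{\delta,t}$ in $L^2(0,T_{loc};(H^1)')$, so Aubin--Lions extracts a subsequence converging strongly in $C([0,T_{loc}];L^2)$ and pointwise a.e., and interpolation upgrades the regularity to $C^{1/2,1/8}_{x,t}$. The entropy estimate passes to the limit and shows $h\geq 0$ with $\{h=0\}$ of measure zero in $\mathcal P$, so the equation is uniformly non-degenerate there and Schauder bootstrap delivers $h\in C^{4,1}_{x,t}(\mathcal P)$. The explicit lower bound on $T_{loc}$ is read off from the Gronwall inequality for $y(t)=\|h_x(t)\|_2^2$: mass conservation plus the Sobolev embedding gives $\|h\|_\infty\leq C_\Omega(\sqrt{y}+M_0/|\Omega|)$, so an inequality of the form $y'\leq C(\sqrt{y}+M_0/|\Omega|)^3(y+|\Omega|)$ has a life-span depending only on $a_0,a_1,a_2,\|w'\|_\infty,|\Omega|,\int h_0,\|h_{0x}\|_2$, and $\int 1/h_0$, precisely the list of data claimed.

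The main obstacle I expect is the passage to the limit in the degenerate flux $\sqrt{f(h_\delta)}\bigl(a_0 h_{\delta,xxx}+a_1 h_{\delta,x}+a_2 w'\bigr)$ on $\mathcal P$. Weak $L^2$ convergence of this flux follows from (\ref{C:d2'}), but identifying its limit with the classical flux of $h$ on $\{h>0\}$ requires coupling the a.e.\ pointwise convergence of $h_\delta$ with the entropy bound (to exclude spurious zero sets in the limit) and an Egoroff/Vitali argument to handle the square-root nonlinearity. A secondary subtlety is tracking all constants so that $T_{loc}$ depends only on the listed data and not on $\delta$, in particular verifying that (\ref{C:a2-new}) survives the limit with the same exponent $B_0$ by lower semicontinuity of the $H^1$ norm.
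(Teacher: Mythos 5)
Your overall plan---regularize, derive energy and entropy estimates, close a nonlinear Gr\"onwall inequality, pass to the limit with Arzel\`a--Ascoli and weak compactness---is the right skeleton and matches the paper's strategy in broad strokes. But two specific steps contain genuine gaps.

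\emph{False positivity claim for the single-parameter regularization.} You assert that the classical solution $h_\delta$ of the $\delta$-regularized problem ``remains strictly positive on its interval of classical existence.'' This is not true: the regularized equation $h_t + ((|h|^3+\delta)(a_0h_{xxx}+\ldots))_x + a_3h_x = 0$ is uniformly fourth-order parabolic and hence has no comparison principle; positive initial data can evolve to a sign-changing classical solution, and with $\delta>0$ the entropy $G_\delta$ is bounded at $z=0$, so the entropy estimate cannot prevent this. The paper resolves this by a \emph{two-parameter} regularization $f_{\delta\eps}(z)=|z|^4/(|z|+\eps)+\delta$ and then passes $\delta\to 0$ first (Lemma~\ref{preC:Th1}): with $\delta=0$ and $\eps>0$ fixed, the degeneracy $f_\eps(z)\sim z^4$ near zero is strong enough that the Bernis--Friedman argument forces $h_\eps$ to be strictly positive, classical, and unique. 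It is precisely on these positive classical solutions $h_\eps$ that the further estimates (including (\ref{C:a2-new}), see Lemma~\ref{lemLL}) are derived without any $\delta$-contamination. Your one-parameter construction never produces an intermediate object with these properties, and the flux-identification step on $\mathcal{P}$ that you correctly flag as the main obstacle becomes considerably harder when $h_\delta$ can be negative.

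\emph{The ``blended'' entropy--$H^1$ estimate is missing, so the Gr\"onwall argument does not close.} In the energy computation obtained by testing against $-h_{xx}$, the $\delta$ part of $f_{\delta\eps}$ produces a term $\delta\,a_1\iint h_{xx}^2$ that the dissipation $a_0\iint f_{\delta\eps}h_{xxx}^2$ cannot absorb, and the estimate $\iint |h|^3h_x^2 \lesssim \tfrac12\iint h_{xx}^2 + \ldots$ also reintroduces $\iint h_{xx}^2$ on the right; meanwhile, the entropy estimate yields control of $\iint h_{xx}^2$ but has $\iint h_x^2$ on its right-hand side. The paper therefore combines the two inequalities linearly (multiplying the entropy bound by $2c_3/a_0$ and adding) to obtain the blended bound (\ref{D:a13''}), and only the resulting inequality for $v(t)=\int h_x^2 + \tfrac{2c_3}{a_0}G_{\delta\eps}(h)\,dx$ admits the nonlinear Gr\"onwall closure that gives $T_{loc}$ independent of $\delta,\eps$. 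Your ``Cauchy--Young absorbs into the dissipation'' step overlooks this circular dependence. Relatedly, your claimed constant $K=|a_2a_3|\|w'\|_\infty\int h_0$ in (\ref{C:d2'}) presumes $h\geq 0$ at the regularized level; since $h_\delta$ can be negative, the paper instead bounds $\iint|h|$ using the $H^1$ bound from (\ref{D:a13''}) via (\ref{L1_bound}), giving $K=|a_2a_3|\|w'\|_\infty(|\Omega|^2\sqrt{K_1}+2M)$, so that constant itself requires the blended estimate.
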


There is nothing special about the time $T_{loc}$ in the bounds
(\ref{C:d2'}) and (\ref{C:a2-new}); given a countable collection of times
in $[0,T_{loc}]$, one can construct a weak solution for which these bounds
will hold at those times.  Also, we note
that the analogue of Theorem 4.2 in \cite{B8} also
holds: there exists a nonnegative weak solution with the
integral formulation
\begin{align} \label{alt_int}
& \int\limits_0^T \langle h_t(\cdot,t), \phi \rangle \; dt
+ a_0 \iint\limits_{Q_T} (3 h^2 h_x h_{xx} \phi_x + h^3 h_{xx} \phi_{xx}) \; dx dt \\
& \hspace{1.5in} - \iint\limits_{Q_T} \left( a_1 h_x + a_2 w' + a_3
h \right) \phi_x \; dx dt = 0. \notag
\end{align}

\begin{theorem}[Regularity]\label{C:Th1.3}
If the initial data from Theorem \ref{C:Th1} also satisfies
$$
\int\limits_{\Omega} {h_0^{\alpha - 1}(x) \,dx} <\infty
$$
for some $-1/2 < \alpha < 1, \quad \alpha \neq 0$ then there exists
$0 < T_{loc}^{(\alpha)} \leq T_{loc}$ such that the nonnegative
generalized weak solution from Theorem \ref{C:Th1} has the extra
regularity $h^{\tfrac{\alpha + 2}{2}} \in L^{2}(0,
T_{loc}^{(\alpha)}; H^2(\Omega))$ and  $h^{\tfrac{\alpha + 2}{4}}
\in L^{2}(0, T_{loc}^{(\alpha)}; W^1_4(\Omega))$.
\end{theorem}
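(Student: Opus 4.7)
The plan is to establish Theorem \ref{C:Th1.3} via an $\alpha$-entropy estimate, extending the Bernis--Friedman technique \cite{B8} to the present PDE with its three lower-order perturbations. The entropy multiplier is $\Phi_\alpha'(h)$ chosen so that $\Phi_\alpha''(h)\,f(h)=h^\alpha$; for $f(h)=|h|^3$ this gives $\Phi_\alpha''(h)=h^{\alpha-3}$, whose primitive $\Phi_\alpha(h)$ has leading order $h^{\alpha-1}/((\alpha-2)(\alpha-1))$. The hypothesis $\int_\Omega h_0^{\alpha-1}\,dx<\infty$ is exactly what makes the initial entropy $\int\Phi_\alpha(h_0)\,dx$ finite.

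First I would carry out the computation on the strictly positive regularized approximations $h_\epsilon$ constructed in the proof of Theorem \ref{C:Th1}, for which rigorous multiplication by a corresponding $\Phi_{\alpha,\epsilon}'(h_\epsilon)$ is justified. Two integrations by parts on the fourth-order term, together with the identity $\int h^{\alpha-1}h_x^2 h_{xx}\,dx=-\tfrac{\alpha-1}{3}\int h^{\alpha-2}h_x^4\,dx$, produce the dissipation $a_0\int h_\epsilon^\alpha h_{\epsilon,xx}^2\,dx-\tfrac{a_0\alpha(\alpha-1)}{3}\int h_\epsilon^{\alpha-2}h_{\epsilon,x}^4\,dx$. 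The pure advection term $a_3 h_x$ contributes nothing, since $\Phi_\alpha'(h)h_x$ is an exact $x$-derivative and the boundary is periodic. The remaining source terms $a_1\,h^\alpha h_x^2$ and $a_2\,h^\alpha h_x w'$ are bounded by Young's inequality using the $L^\infty_t H^1_x$ control (\ref{C:a2-new}) from Theorem \ref{C:Th1}, yielding a closed differential inequality for the entropy.

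The crucial algebraic step is a Bernis--Friedman-type identity: expanding $|(h^{(\alpha+2)/2})_{xx}|^2$ and $|(h^{(\alpha+2)/4})_x|^4$ in powers of $h$, $h_x$, $h_{xx}$ and integrating by parts, one checks that for $\alpha\in(-1/2,1)\setminus\{0\}$ there exist positive constants $\theta(\alpha),\theta'(\alpha)$ with $a_0\int h^\alpha h_{xx}^2\,dx-\tfrac{a_0\alpha(\alpha-1)}{3}\int h^{\alpha-2}h_x^4\,dx \ge \theta\int|(h^{(\alpha+2)/2})_{xx}|^2\,dx+\theta'\int|(h^{(\alpha+2)/4})_x|^4\,dx$. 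Substituting into the entropy identity and applying Gronwall on a short interval $[0,T_{loc}^{(\alpha)}]\subset[0,T_{loc}]$ produces uniform-in-$\epsilon$ bounds on $\|h_\epsilon^{(\alpha+2)/2}\|_{L^2(0,T_{loc}^{(\alpha)};H^2)}$ and $\|h_\epsilon^{(\alpha+2)/4}\|_{L^2(0,T_{loc}^{(\alpha)};W^1_4)}$. Weak lower-semicontinuity of these norms, together with the strong convergence $h_\epsilon\to h$ in $C^{1/2,1/8}(\overline{Q}_{T_{loc}})$ inherited from Theorem \ref{C:Th1}, transfers the regularity to the generalized weak solution $h$.

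The main obstacle is the algebraic coercivity step above. For $\alpha\in(-1/2,0)$ the integral $-\tfrac{\alpha(\alpha-1)}{3}\int h^{\alpha-2}h_x^4$ has the wrong sign, and the lower bound $\alpha>-1/2$ is precisely the sharp threshold under which this indefinite term can still be absorbed into a positive linear combination coercive with respect to both target norms simultaneously. A secondary technical issue is designing a regularized entropy $\Phi_{\alpha,\epsilon}$ that is globally defined on $\mathbb{R}$, compatible with the mobility regularization used in Theorem \ref{C:Th1}, and whose identity passes to the limit $\epsilon\to 0$ via Fatou's lemma on the dissipation side and strong convergence on the initial-value side.
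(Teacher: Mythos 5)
Your proposal follows essentially the same route as the paper: the result is proved via the $\alpha$-entropy estimate of Lemma~\ref{MainAE2}, using the multiplier $(G^{(\alpha)}_{\varepsilon})'(h_\varepsilon)$, the integration-by-parts identity linking $\int h^{\alpha-1}h_x^2 h_{xx}$ to $\int h^{\alpha-2}h_x^4$, the observation that $\alpha>-1/2$ is the sharp threshold for absorbing the indefinite $h^{\alpha-2}h_x^4$ term into $\int h^\alpha h_{xx}^2$, and weak lower semicontinuity to pass the uniform $L^2(0,T;H^2)$ and $L^2(0,T;W^1_4)$ bounds to the limit. The one cosmetic difference is that the paper does not appeal to the $H^1$ bound~(\ref{C:a2-new}) separately but instead couples the $\alpha$-entropy with $\int h_{\varepsilon,x}^2$ in a single nonlinear Gr\"onwall (Bihari) argument at the level of the $\varepsilon$-approximations, which gives the shortened time $T_{loc}^{(\alpha)}$ directly; both organizations yield the same conclusion.
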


The solutions from Theorem \ref{C:Th1.3} are often called ``strong''
solutions in the thin film literature. If the initial data satisfies
$\int h_0^{\alpha-1} \; dx < \infty$ then the added regularity from
Theorem \ref{C:Th1.3} allows one to prove the existence of
nonnegative solutions with an integral formulation
\cite{BertPugh1996} that is similar to that of (\ref{alt_int})
except that the second integral is replaced by the results of one
more integration by parts (there are no $h_{xx}$ terms). We also
note that if one considered problem (P) with nonlinearity $f(h) =
|h|^n$ with $0 < n < 3$, then Theorems \ref{C:Th1} and \ref{C:Th1.3}
would hold for general nonnegative initial data $h_0 \in
H^1(\Omega)$; no ``finite entropy'' assumption would be needed
\cite{BertPugh1996, B2}. Finite entropy conditions ($\int
h_0^{2-n}\; dx < \infty$ and $\int h_0^{\alpha+2-n}\; dx < \infty$)
would be needed to obtain the results for $n \geq 3$.

\subsection{Regularized Problem}

\label{RegularizedProblem}

Given $\delta, \eps > 0$, a regularized parabolic
problem, similar to that of Bernis and
Friedman \cite{B8},
is considered:
\begin{numcases}
{(\textup{P}_{\delta,\epsilon})}
 h_t  + \left( { f_{\delta \eps}(h)
\bigl(a_0 h_{xxx} + a_1 h_x + a_2 w'(x) \bigr) }
\right)_x + a_3 h_x = 0, \qquad \hfill \label{D:1r'}\\
\tfrac{\partial^{i} h}{\partial x^i}(-a,t) = \tfrac{\partial^{i}
h}{\partial x^i}(a,t) \text{ for }
t > 0,\,i= 0,1,2,3 , \hfill \label{D:2r'}\\
\qquad  \qquad h(x,0) = h_{0,\delta \eps}(x) \hfill \label{D:3r'}
\end{numcases}
where
\begin{equation}\label{D:reg1}
f_{\delta \varepsilon} (z) := f_\eps(z) + \delta = \tfrac{|z|^4}{|z|
+ \varepsilon} + \delta \quad \ \forall\, z \in \mathbb{R}^1,\
\delta>0, \varepsilon >0.
\end{equation}
The $\delta>0$ in (\ref{D:reg1}) makes the problem (\ref{D:1r'})
regular (i.e. uniformly parabolic). The parameter $\eps$ is an
approximating parameter which has the effect of increasing the
degeneracy from $f(h) \sim |h|^3$ to $f_{\eps}(h) \sim h^4$. The
nonnegative initial data, $h_0$, is approximated via
\begin{equation}\label{D:inreg}
\begin{gathered}
h_{0,\delta \eps} =
h_{0,\delta} + \eps^\theta \in C^{4+\gamma}(\Omega)
\text{ for some } 0 < \theta < 2/5 \text{ and } \gamma  \text{ from }  (\ref{B:w} )\\
\tfrac{\partial^{i} h_{0,\delta \eps}}{\partial x^i}(-a) =
\tfrac{\partial^{i}h_{0,\delta \eps}}{\partial x^i}(a)
\text{ for } i=\overline{0,3}, \\
h_{0,\delta \varepsilon} \to h_{0}  \text{ strongly in } H^1(\Omega)
\text{ as } \delta, \varepsilon \to 0.
\end{gathered}
\end{equation}
 The $\varepsilon$ term in (\ref{D:inreg}) ``lifts'' the initial data so that it will be positive even if
$\delta = 0$ and the $\delta$ is involved in
smoothing the initial data from $H^1(\Omega)$ to $C^{4+\gamma}(\Omega)$.

By E\u{i}delman \cite[Theorem 6.3, p.302]{Ed}, the regularized
problem has a unique classical solution $h_{\delta \eps} \in
C_{x,t}^{4+\gamma,1+\gamma/4}( \Omega \times [0, \tau_{\delta
\eps}])$ for some time $\tau_{\delta \eps} > 0$.
For any fixed value of  $\delta$ and $\eps$, by E\u{i}delman \cite
[Theorem 9.3, p.316]{Ed} if one can prove a uniform in time an a
priori bound $|h_{\delta \eps}(x,t)| \leq A_{\delta \eps}<\infty$
for some longer time interval $[0,T_{loc,\delta \eps}] \quad
(T_{loc,\delta \eps} > \tau_{\delta \eps}$) and for all $x \in
\Omega$ then Schauder-type interior estimates \cite [Corollary 2,
p.213] {Ed} imply that the solution $h_{\delta \eps}$ can be
continued in time to be in $C_{x,t}^{4+\gamma,1+\gamma/4}( \Omega
\times [0,T_{loc,\delta \eps}])$.

Although the solution $h_{\delta \eps}$ is initially positive, there
is no guarantee that it will remain nonnegative. The goal is to take
$\delta \to 0$, $\epsilon \to 0$ in such a way that 1)
$T_{loc,\delta \eps} \to T_{loc} > 0$, 2) the solutions $h_{\delta
\eps}$ converge to a (nonnegative) limit, $h$, which is a
generalized weak solution, and 3) $h$ inherits certain a priori
bounds.  This is done by proving various a priori estimates for
$h_{\delta \eps}$ that are uniform in  $\delta$ and $\eps$ and hold
on a time interval $[0,T_{loc}]$ that is independent of $\delta$ and
$\eps$.  As a result, $\{ h_{\delta \eps} \}$ will be a uniformly
bounded and equicontinuous (in the $C_{x,t}^{1/2,1/8}$ norm) family
of functions in $\bar{\Omega} \times [0, T_{loc}]$. Taking $\delta
\to 0$ will result in a  family of functions $\{ h_{\eps} \}$ that
are classical, positive, unique solutions to the regularized problem
with $\delta = 0$.   Taking $\eps \to 0$ will then result in the
desired generalized weak solution $h$.  This last  step is where the
possibility of nonunique weak solutions arise; see \cite{B2} for
simple examples of how such constructions applied to $h_t = - (|h|^n
h_{xxx})_x$ can result in two different solutions arising from the
same initial data.

\subsection{A priori estimates}

Our first task is to derive a priori estimates for classical
solutions of (\ref{D:1r'})--(\ref{D:inreg}). The lemmas in this
section are proved in Section \ref{A_priori_proofs}.

We use an integral quantity based on a function
$G_{\delta \eps}$ chosen so that
\begin{equation}\label{D:reg2}
G''_{\delta \varepsilon} (z) = \tfrac{1}{f_{\delta \varepsilon} (z)}
\quad \mbox{and} \quad G_{\delta \eps}(z) \geq 0.
\end{equation}
This is analogous to the ``entropy'' function first introduced by
Bernis and Friedman \cite{B8}.

\begin{lemma}\label{MainAE}
There exists $\delta_0 > 0$, $\eps_0 > 0$, and time $T_{loc}>0$ such
that if $\delta \in [0,\delta_0)$, $\eps \in (0,\eps_0)$, if
$h_{\delta \eps}$ is a classical solution of the problem
(\ref{D:1r'})--(\ref{D:inreg}) with initial data $h_{0,\delta
\eps}$, and if $h_{0,\delta \eps}$ satisfies (\ref{D:inreg}) and is
built from a nonnegative function $h_0$ that satisfies the
hypotheses of Theorem \ref{C:Th1} then for any $T \in [0, T_{loc}]$
the solution $h_{\delta \eps}$ satisfies
\begin{align}\label{D:a13''}
& \int\limits_{\Omega} { \{h_{\delta \eps, x}^2(x,T) + \tfrac{|a_1|}{a_0}\left( \tfrac{|a_1|}{a_0}
+ 2 \delta  \right) G_{\delta \varepsilon}(h_{\delta \eps}(x,T))\} \,dx} \\
& \hspace{1in}+ \notag a_0 \iint\limits_{Q_T} { f_{\delta
\varepsilon}(h_{\delta \eps}) h^2_{\delta \eps, xxx}  \,dx dt}
\leqslant K_1 < \infty,
\end{align}
\begin{equation} \label{BF_entropy}
\int\limits_\Omega G_{\delta \eps}(h_{\delta \eps}(x,T)) \; dx + a_0 \iint\limits_{Q_T} h_{\delta \eps, xx}^2 \; dx dt
\leq K_2 < \infty,
\end{equation}
and the energy
$\mathcal{E}_{\delta \varepsilon} (t)$ (see (\ref{Energy})) satisfies:
\begin{align}\label{D:d2}
& \mathcal{E}_{\delta \varepsilon}(T) + \iint\limits_{Q_T}
{f_{\delta \varepsilon}(h_{\delta \eps}) (a_0 h_{\delta \eps,xxx} + a_1 h_{\delta \eps, x} + a_2 w')^2} \; dx dt \\
& \hspace{3in} \notag
\leqslant  C_0 + K_3 T
\end{align}
where $K_3 =|a_2 a_3|\mathop \| w' \|_\infty C < \infty$. The time
$T_{loc}$ and the constants $K_1$, $K_2$, $C_0$, and $K_3$ are
independent of $\delta$ and $\eps$.

\end{lemma}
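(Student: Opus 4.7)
The plan is to obtain the three estimates of Lemma \ref{MainAE} by testing the regularized equation (\ref{D:1r'}) against three classical multipliers---the variational derivative $-(a_0 h_{xx}+a_1 h+a_2 w)$ of the energy density, the $H^1$ multiplier $-h_{xx}$, and the entropy multiplier $G'_{\delta\eps}(h)$---and then closing a scalar ODE inequality for $y(t):=\|h_{\delta\eps,x}(\cdot,t)\|_2^2$ whose constants are uniform in $\delta,\eps$. For the energy bound (\ref{D:d2}), multiplying (\ref{D:1r'}) by $-(a_0 h_{xx}+a_1 h+a_2 w)$ and integrating by parts under (\ref{D:2r'}) converts the time-derivative term into $\tfrac{d}{dt}\mathcal{E}_{\delta\eps}$ and the fourth-order piece into $\int f_{\delta\eps}(a_0 h_{xxx}+a_1 h_x+a_2 w')^2\,dx$. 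The $a_3 h_x$ term collapses by periodicity to $-a_2 a_3\int w' h\,dx$, which is bounded by $|a_2 a_3|\,\|w'\|_\infty\|h\|_{L^1}$; since $\int h\,dx\equiv\int h_{0,\delta\eps}\,dx$ by mass conservation, this yields (\ref{D:d2}) after time integration.

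For the combined bound (\ref{D:a13''}), testing (\ref{D:1r'}) by $-h_{xx}$ gives, after integration by parts,
$$
\tfrac{d}{dt}\tfrac{1}{2}\|h_{\delta\eps,x}\|_2^2 + a_0\!\int f_{\delta\eps}h_{xxx}^2\,dx = -a_1\!\int f_{\delta\eps}h_x h_{xxx}\,dx - a_2\!\int f_{\delta\eps}w' h_{xxx}\,dx,
$$
with the $a_3 h_x$ piece dropping by periodicity. Cauchy--Schwarz absorbs each cross-term into $\tfrac{a_0}{4}\!\int f_{\delta\eps}h_{xxx}^2$ plus $\tfrac{a_1^2}{a_0}\!\int f_{\delta\eps}h_x^2$ and $\tfrac{a_2^2}{a_0}\|w'\|_\infty^2\!\int f_{\delta\eps}$. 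Writing $f_{\delta\eps}=f_\eps+\delta$ with $f_\eps(z)\le|z|^3$, using conservation of mass and the 1D Sobolev inequality $\|h\|_\infty\le C(\|h\|_{L^1}+\|h_x\|_2)$, one bounds $\int f_\eps h_x^2\le C(1+y^{5/2})$ and $\int f_\eps\le C(1+y^{3/2})$, yielding the scalar inequality $y'(t)\le C_*(1+y(t)^{5/2})$ with $C_*$ independent of $\delta,\eps$. Comparison with the ODE $z'=C_*(1+z^{5/2})$ supplies an explicit $T_{loc}>0$ depending only on the quantities listed in the statement, on which $y$ stays bounded; subsequent time integration delivers the $\int h_x^2$ and $a_0\iint f_{\delta\eps}h_{xxx}^2$ portions of (\ref{D:a13''}).

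For the entropy bound (\ref{BF_entropy}), multiplying (\ref{D:1r'}) by $G'_{\delta\eps}(h)$ and using $G''_{\delta\eps}f_{\delta\eps}\equiv 1$ yields, after integration by parts,
$$
\tfrac{d}{dt}\!\int G_{\delta\eps}(h)\,dx + a_0\!\int h_{xx}^2\,dx = a_1\!\int h_x^2\,dx + a_2\!\int w' h_x\,dx,
$$
with the $a_3 h_x$ term integrating to an exact derivative that vanishes by periodicity. On $[0,T_{loc}]$ the right-hand side is controlled by the $H^1$ bound from the previous step, so time integration gives (\ref{BF_entropy}). Adding $\tfrac{|a_1|}{a_0}\bigl(\tfrac{|a_1|}{a_0}+2\delta\bigr)$ times this entropy identity to the $H^1$ identity and integrating in time produces the full combined bound (\ref{D:a13''}); the $2\delta$ coefficient appears because, when $-a_1\int f_{\delta\eps}h_x h_{xxx}$ is handled by splitting $f_{\delta\eps}=f_\eps+\delta$ and integrating the $\delta$ piece by parts, one generates an $a_1\delta\int h_{xx}^2$ term that must be absorbed into the entropy dissipation $a_0\int h_{xx}^2$.

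The principal obstacle is obtaining $T_{loc}>0$ uniform in $(\delta,\eps)$. This hinges on the superlinear degree $5/2$ in the $H^1$ ODE, which fixes the existence time explicitly from the initial $\|h_{0x}\|_2$ and the conserved mass; and on verifying that every constant in the Sobolev/Young chain (notably from $f_\eps(z)\le|z|^3$) is $\delta,\eps$-independent. The approximation (\ref{D:inreg}) ensures that $h_{0,\delta\eps}>0$ and converges to $h_0$ in $H^1$, so the formal manipulations above are legitimate on the classical solution of $(\textup{P}_{\delta,\eps})$.
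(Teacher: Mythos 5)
Your three-multiplier plan is the right one and matches the paper's structure, but the way you close the $H^1$ Gr\"onwall loop is genuinely different from the paper's, and---worth noting---somewhat cleaner. You bound
\[
\int f_{\delta\eps}(h)\,h_x^2\,dx \le \bigl(\|h\|_\infty^3+\delta\bigr)\int h_x^2\,dx
\le C\bigl(1+y^{3/2}\bigr)\,y ,
\]
using only H\"older, mass conservation, and the one--dimensional Sobolev bound for $\|h\|_\infty$, and you thereby obtain a \emph{self--contained} scalar inequality $y'\le C_*(1+y^{5/2})$ for $y(t)=\|h_{\delta\eps,x}(\cdot,t)\|_2^2$. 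The paper instead splits $\int|h|^3 h_x^2 \le \tfrac12\int h^6 + \tfrac12\int h_x^4$ and controls $\int h_x^4$ through the interpolation $\|h_x\|_4^4 \lesssim \|h_{xx}\|_2\,\|h_x\|_2^3$, which introduces an $\iint h_{xx}^2$ term that \emph{cannot} be absorbed by the degenerate dissipation $\iint f_{\delta\eps} h_{xxx}^2$. The paper therefore runs the entropy estimate in parallel, multiplies it by $2c_3/a_0$, and adds, so that the entropy dissipation $a_0\iint h_{xx}^2$ swallows the troublesome term; the Gr\"onwall is then done on the \emph{blended} quantity $v=\int h_x^2 + (2c_3/a_0)\int G_{\delta\eps}$ with exponent $3$ rather than $5/2$. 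So the blending (and with it the specific coefficient $\tfrac{|a_1|}{a_0}(\tfrac{|a_1|}{a_0}+2\delta)$) is not forced by the analysis---it is an artifact of the particular Cauchy splitting the authors chose. Your route is simpler, avoids the blending entirely, and still delivers (\ref{D:a13''}) by adding any positive multiple of the entropy inequality to the already--closed $H^1$ inequality; your last paragraph's explanation of the $2\delta$ is therefore a valid description of the paper's bookkeeping but an unnecessary complication for your own argument, and as written it sits somewhat at odds with your earlier claim to have closed the $H^1$ ODE without the entropy.

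Two small precision points. First, in the energy estimate you write that $-a_2a_3\iint w'h$ is bounded via $\|h\|_{L^1}$ ``by mass conservation.'' Mass conservation only controls $\int h$, not $\int|h|$, and since $h_{\delta\eps}$ need not stay nonnegative for $\delta>0$ these can differ. The correct route (and the one the paper uses, see (\ref{L1_bound})) is to bound $\iint|h|$ from the already--established $H^1$ bound (\ref{D:a13''}); so (\ref{D:d2}) must be derived after, not independently of, (\ref{D:a13''}). Second, in your Sobolev inequality $\|h\|_\infty\le C(\|h\|_{L^1}+\|h_x\|_2)$ the quantity you actually control is $|\int h|=M$, so you should state the bound as $\|h\|_\infty\le M/|\Omega|+|\Omega|^{1/2}\|h_x\|_2$, which is what makes the ODE constants $\delta,\eps$-uniform. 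Finally, for completeness you would still need the remark (which the paper defers to the end of the proof of Lemma~\ref{F:local_BF}) that $\int G_{\delta\eps}(h_{0,\delta\eps})$ is uniformly bounded as $\delta,\eps\to0$ because of the finite--entropy hypothesis $\int 1/h_0<\infty$ and the choice $\theta<2/5$ in (\ref{D:inreg}); without this the constants $K_1,K_2$ and the time $T_{loc}$ would not be $\delta,\eps$-independent.
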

The existence of $\delta_0$, $\eps_0$, $T_{loc}$, $K_1$, $K_2$, and
$K_3$ is constructive; how to find them and what quantities
determine them is shown in Section \ref{A_priori_proofs}.

Lemma \ref{MainAE} yields uniform-in-$\delta$-and-$\eps$ bounds for
$\int h_{\delta \eps,x}^2$, $\int G_{\delta \eps}(h_{\delta \eps})$,
$\iint h_{\delta \eps,xx}^2$, and \newline $\iint f_{\delta
\eps}(h_{\delta \eps}) h_{\delta \eps,xxx}^2$.  However, these
bounds are found in a different manner than in earlier work for the
equation $h_t = - (|h|^n h_{xxx})_x$, for example.  Although the
inequality (\ref{BF_entropy}) is unchanged, the inequality
(\ref{D:a13''}) has an extra term involving $G_{\delta \eps}$.  In
the proof, this term was introduced to control additional,
lower--order terms. This idea of a ``blended'' $\| h_x
\|_2$--entropy bound was first introduced by Shishkov and Taranets
especially for long-wave stable thin film equations with convection
\cite{T4}.

\begin{lemma}\label{lemLL}
Assume $\eps_0$ and $T_{loc}$ are from Lemma \ref{MainAE}, $\delta =
0$, and $\eps \in (0,\eps_0)$.  If $h_\eps$ is a positive, classical
solution of the problem (\ref{D:1r'})--(\ref{D:inreg}) with initial
data $h_{0,\eps}$ satisfying Lemma \ref{MainAE}.

\begin{equation}\label{D:a13-new}
\int\limits_{\Omega} {h_{\eps,x}^2(x,T) \,dx}
\leq \max\left\{ \| w'
\|_2^2, \int\limits_\Omega h_{0\varepsilon,x}^2\,dx \right\}
e^{B_\eps(T)}
\end{equation} holds true
for all $T \in [0,T_{loc}]$. Here
$$
B_\eps(T) :=
2 \tfrac{a_1^2 + a_2^2}{a_0}
\int\limits_0^T \| h_\eps(\cdot,t) \|_\infty^3 \; dt
$$
\end{lemma}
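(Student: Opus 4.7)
\textbf{Proof plan for Lemma~\ref{lemLL}.}
The natural quantity to differentiate is $\tfrac{1}{2}\int_\Omega h_{\eps,x}^2\,dx$, so the plan is to multiply the regularized equation (\ref{D:1r'}) (with $\delta=0$) by $-h_{\eps,xx}$ and integrate over $\Omega$. Because $h_\eps$ is a classical, spatially periodic solution, every integration by parts is justified and all boundary terms vanish. The $h_t$ term produces $\tfrac{1}{2}\tfrac{d}{dt}\int h_{\eps,x}^2\,dx$ after one integration by parts, the fourth-order term yields $\int f_\eps(h_\eps)\bigl(a_0 h_{\eps,xxx}+a_1 h_{\eps,x}+a_2 w'\bigr)h_{\eps,xxx}\,dx$ after integrating the outer $\partial_x$ onto $h_{\eps,xx}$, and the linear advection term contributes $-a_3\int h_{\eps,x}h_{\eps,xx}\,dx=-\tfrac{a_3}{2}\int (h_{\eps,x}^2)_x\,dx=0$ by periodicity.

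Next I would split the fourth-order term into a good part $a_0\int f_\eps(h_\eps)h_{\eps,xxx}^2\,dx$ and two cross terms involving $h_{\eps,x}$ and $w'$, then absorb the cross terms with Young's inequality:
\begin{align*}
|a_1|\,f_\eps(h_\eps)|h_{\eps,x}h_{\eps,xxx}| &\leq \tfrac{a_0}{4}f_\eps(h_\eps)h_{\eps,xxx}^2 + \tfrac{a_1^2}{a_0}f_\eps(h_\eps)h_{\eps,x}^2, \\
|a_2|\,f_\eps(h_\eps)|w'\,h_{\eps,xxx}| &\leq \tfrac{a_0}{4}f_\eps(h_\eps)h_{\eps,xxx}^2 + \tfrac{a_2^2}{a_0}f_\eps(h_\eps)(w')^2.
\end{align*}
Half of the dissipation is absorbed, the rest can be discarded, and the pointwise bound $f_\eps(z)=|z|^4/(|z|+\eps)\leq |z|^3$ gives the differential inequality
\[
\tfrac{d}{dt}\!\int_\Omega h_{\eps,x}^2\,dx \;\leq\; \tfrac{2a_1^2}{a_0}\|h_\eps(\cdot,t)\|_\infty^3\!\int_\Omega h_{\eps,x}^2\,dx \;+\; \tfrac{2a_2^2}{a_0}\|h_\eps(\cdot,t)\|_\infty^3\,\|w'\|_2^2.
\]

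Finally I would close the estimate with a maximum-trick version of Gronwall. Set $y(t):=\int_\Omega h_{\eps,x}^2(x,t)\,dx$, $c:=\|w'\|_2^2$, and $z(t):=\max\{y(t),c\}$. Then $z$ is absolutely continuous and, wherever $y>c$, $z'(t)=y'(t)\leq \tfrac{2a_1^2}{a_0}\|h_\eps\|_\infty^3 y + \tfrac{2a_2^2}{a_0}\|h_\eps\|_\infty^3 c \leq \tfrac{2(a_1^2+a_2^2)}{a_0}\|h_\eps\|_\infty^3 z$, while on $\{y<c\}$ one has $z'=0$ and the same inequality holds trivially. Integrating $z'/z\leq B_\eps'$ from $0$ to $T$ gives $z(T)\leq z(0)\,e^{B_\eps(T)}=\max\{y(0),\|w'\|_2^2\}\,e^{B_\eps(T)}$, and $y(T)\leq z(T)$ is exactly the claimed bound (\ref{D:a13-new}).

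The only mildly subtle point is the final Gronwall step, since the inhomogeneous term is not absorbed into the $y$ on the right-hand side; the $\max$-substitution handles this cleanly and is the reason the bound features $\max\{\|w'\|_2^2,\int h_{0\eps,x}^2\}$ rather than a sum. Everything else reduces to the standard energy computation for the Pukhnachov-type equation together with the $f_\eps\leq |\cdot|^3$ comparison.
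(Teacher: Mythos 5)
Your proposal is correct and follows essentially the same route as the paper: test with $-h_{\eps,xx}$, absorb the cross terms by Cauchy–Young (costing $\tfrac{a_0}{4}$ of the dissipation each), bound $f_\eps(z)\leq|z|^3\leq\|h_\eps\|_\infty^3$, drop the remaining dissipation, and close with a max-modified Grönwall giving $\max\{\|w'\|_2^2,\int h_{0\eps,x}^2\}\,e^{B_\eps(T)}$. The paper states the Grönwall step in integral form (citing Bihari) applied to $v(T)\leq v(0)+\int_0^T B(t)\max\{A,v(t)\}\,dt$; your differential-form version with $z=\max\{y,c\}$ is an equivalent, arguably more transparent, way to obtain the same exponential bound.
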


The final a priori bound uses the following functions, parametrized by
$\alpha$,
\begin{equation}\label{E:reg4}
G_{\varepsilon}^{(\alpha)} (z): =\tfrac{z^{\alpha -
1}}{(\alpha-1)(\alpha - 2)} + \tfrac{\varepsilon z^{\alpha
-2}}{(\alpha - 3)(\alpha - 2)}; \ (G^{(\alpha)}_{\varepsilon} (z))''
= \tfrac{z^{\alpha}}{f_{\varepsilon} (z)}.
\end{equation}

\begin{lemma}\label{MainAE2}
Assume $\eps_0$ and $T_{loc}$ are from Lemma \ref{MainAE}, $\delta =
0$, and $\eps \in (0,\eps_0)$.  Assume $h_\eps$ is a positive,
classical solution of the problem (\ref{D:1r'})--(\ref{D:inreg})
with initial data $h_{0,\eps}$ satisfying Lemma \ref{MainAE}.
Fix
$\alpha \in (-1/2,1)$ with $\alpha \neq 0$. If the initial data
$h_{0,\eps}$ is built from $h_0$ which also satisfies
\begin{equation} \label{finite_alpha_ent}
\int\limits_\Omega h_0^{\alpha - 1}(x) \; dx < \infty
\end{equation}
then there exists $\eps_0^{(\alpha)}$ and $T_{loc}^{(\alpha)}$ with
$0 < \eps_0^{(\alpha)} \leq \eps_0$ and $0 < T_{loc}^{(\alpha)} \leq
T_{loc}$ such that
\begin{align}
\label{E:b12''}
& \int\limits_{\Omega} { \{h_{\eps,x}^2(x,T) + G_{\varepsilon}^{(\alpha)}(h_\eps(x,T))\}
\,dx} \\
& \hspace{1in} \notag + \iint\limits_{Q_T} \left[ \beta
h_\eps^\alpha h_{\eps,xx}^2 + \gamma h_\eps^{\alpha-2} h_{\eps,x}^4
\right]\;dx\,dt \leqslant K_4 < \infty
\end{align}
holds for all $T \in [0 , T_{loc}^{(\alpha)} ]$ and some constant
$K_4$ that is determined by $\alpha$, $\eps_0$, $a_0$, $a_1$, $a_2$,
$w'$, $\Omega$ and $h_0$. Here,
$$
\beta =
\begin{cases}
a_0 & \mbox{if } 0 < \alpha < 1, \\
a_0 \tfrac{1+2\alpha}{4(1-\alpha)} & \mbox{if } -1/2 < \alpha < 0
\end{cases}
$$
and
$$
\gamma =
\begin{cases}
a_0 \tfrac{\alpha(1-\alpha)}{6}& \mbox{if } 0 < \alpha < 1, \\
a_0 \tfrac{(1+2\alpha)(1-\alpha)}{36} & \mbox{if } -1/2 < \alpha < 0.
\end{cases}
$$
 Furthermore,
\begin{equation}\label{E:b13}
h_\eps^{\tfrac{\alpha+2}{2}} \in L^{2}(0, T_{loc}; H^2(\Omega))
\quad \mbox{and} \quad
h_\eps^{\tfrac{\alpha+2}{4}} \in L^{2}(0,
T_{loc}; W^1_4(\Omega))
\end{equation}
with a uniform-in-$\eps$ bound.
\end{lemma}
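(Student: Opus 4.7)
The plan is to run an $\alpha$-entropy estimate for the regularized problem in parallel with the $H^1$ bound from Lemma~\ref{MainAE}, and then translate the resulting dissipation into the stated Sobolev regularity. The starting point is to multiply the regularized equation (\ref{D:1r'}) (with $\delta = 0$) by $(G_\varepsilon^{(\alpha)})'(h_\eps)$ and integrate over $\Omega$. Periodicity makes the boundary terms vanish and causes the $a_3 h_{\eps,x}$ contribution to drop out, since $-a_3\int (G_\varepsilon^{(\alpha)})'(h_\eps) h_{\eps,x}\,dx = -a_3\int (G_\varepsilon^{(\alpha)}(h_\eps))_x\,dx = 0$. Combined with the defining relation $(G_\varepsilon^{(\alpha)})''(z) f_\eps(z) = z^\alpha$, this produces
\[
\tfrac{d}{dt}\int_\Omega G_\varepsilon^{(\alpha)}(h_\eps)\,dx \;=\; a_0 I_1 + a_1 I_2 + a_2 I_3,
\]
with $I_1 = \int h_\eps^\alpha h_{\eps,x} h_{\eps,xxx}\,dx$, $I_2 = \int h_\eps^\alpha h_{\eps,x}^2\,dx$, and $I_3 = \int h_\eps^\alpha h_{\eps,x} w'\,dx$.

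Two integrations by parts, combined with the identity $\int h_\eps^{\alpha-1} h_{\eps,x}^2 h_{\eps,xx}\,dx = \tfrac{1-\alpha}{3}\int h_\eps^{\alpha-2} h_{\eps,x}^4\,dx$, yield the exact formula
\[
I_1 \;=\; -\int h_\eps^\alpha h_{\eps,xx}^2\,dx \;+\; \tfrac{\alpha(\alpha-1)}{3}\int h_\eps^{\alpha-2} h_{\eps,x}^4\,dx.
\]
For $\alpha \in (0,1)$ the coefficient $\alpha(\alpha-1)/3$ is negative and both contributions to $a_0 I_1$ are directly dissipative, giving the stated $\beta = a_0$ and twice the stated $\gamma$ without any extra work. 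For $\alpha \in (-1/2, 0)$ the second contribution has the wrong sign; here the fix is to combine the Cauchy--Schwarz bound
\[
\tfrac{1-\alpha}{3}\int h_\eps^{\alpha-2} h_{\eps,x}^4 \;=\; \int h_\eps^{\alpha-1} h_{\eps,x}^2 h_{\eps,xx} \;\leq\; \Bigl(\int h_\eps^\alpha h_{\eps,xx}^2\Bigr)^{1/2} \Bigl(\int h_\eps^{\alpha-2} h_{\eps,x}^4\Bigr)^{1/2},
\]
equivalently $\int h_\eps^{\alpha-2} h_{\eps,x}^4 \leq \tfrac{9}{(1-\alpha)^2}\int h_\eps^\alpha h_{\eps,xx}^2$, with a convex-combination trick: split $-\int h_\eps^\alpha h_{\eps,xx}^2$ as $-\theta\int - (1-\theta)\int$ and control the second piece with the displayed estimate. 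A short optimization shows that the choice $\theta = \tfrac{1+2\alpha}{4(1-\alpha)}$ leaves strictly negative coefficients in front of both $\int h_\eps^\alpha h_{\eps,xx}^2$ and $\int h_\eps^{\alpha-2} h_{\eps,x}^4$, reproducing the $\beta$ of the lemma and three times its $\gamma$; the extra halving and thirding are reserved for absorption in the next step. The degeneration of $1+2\alpha$ at $\alpha = -1/2$ is exactly what forces the range $\alpha > -1/2$.

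The next step is to control the lower-order integrals. For $I_2$ a Young inequality on the factorization $h_\eps^\alpha h_{\eps,x}^2 = \bigl(h_\eps^{(\alpha-2)/2} h_{\eps,x}^2\bigr)\cdot h_\eps^{(\alpha+2)/2}$ gives $|a_1 I_2| \leq \kappa \int h_\eps^{\alpha-2} h_{\eps,x}^4 + C(\kappa,a_1) \int h_\eps^{\alpha+2}\,dx$, and since $\alpha + 2 > 0$ the last integral is bounded by $|\Omega|\,\|h_\eps\|_\infty^{\alpha+2}$, which Lemma~\ref{MainAE} controls through Sobolev embedding. For $I_3$, an integration by parts (valid because $\alpha+1 > 0$ and $w$ is periodic) rewrites it as $-\tfrac{1}{\alpha+1}\int h_\eps^{\alpha+1} w''(x)\,dx$, again uniformly bounded by Lemma~\ref{MainAE} and the hypothesis (\ref{B:w}). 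Choosing $\kappa$ small enough to absorb the $\int h_\eps^{\alpha-2} h_{\eps,x}^4$ piece into the $\gamma$ dissipation produces a differential inequality of the schematic form
\[
\tfrac{d}{dt}\int G_\varepsilon^{(\alpha)}(h_\eps)\,dx \,+\, \beta \int h_\eps^\alpha h_{\eps,xx}^2 \,+\, \gamma \int h_\eps^{\alpha-2} h_{\eps,x}^4 \;\leq\; C_1 + C_2\,\bigl(1+\|h_\eps\|_{H^1}\bigr)^p,
\]
with constants independent of $\varepsilon$. The hypothesis $\int h_0^{\alpha-1}\,dx < \infty$ translates (through (\ref{E:reg4}) and (\ref{D:inreg})) into uniform-in-$\varepsilon$ finiteness of $\int G_\varepsilon^{(\alpha)}(h_{0,\eps})\,dx$, so combining with the $H^1$ bound from Lemma~\ref{MainAE} and applying a Gronwall-type argument yields (\ref{E:b12''}) on a time interval $[0, T_{loc}^{(\alpha)}]$ whose length is fully determined by $\alpha$, the fixed parameters, and the quantities appearing in Lemma~\ref{MainAE}.

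Finally, the Sobolev statement (\ref{E:b13}) follows from the chain-rule identities
\[
\bigl(h_\eps^{(\alpha+2)/2}\bigr)_{xx} = \tfrac{\alpha+2}{2}\Bigl[\tfrac{\alpha}{2} h_\eps^{(\alpha-2)/2} h_{\eps,x}^2 + h_\eps^{\alpha/2} h_{\eps,xx}\Bigr], \qquad \bigl(h_\eps^{(\alpha+2)/4}\bigr)_x = \tfrac{\alpha+2}{4} h_\eps^{(\alpha-2)/4} h_{\eps,x},
\]
which give pointwise bounds $\bigl|(h_\eps^{(\alpha+2)/2})_{xx}\bigr|^2 \leq C\bigl(h_\eps^{\alpha-2} h_{\eps,x}^4 + h_\eps^\alpha h_{\eps,xx}^2\bigr)$ and $\bigl|(h_\eps^{(\alpha+2)/4})_x\bigr|^4 = C\,h_\eps^{\alpha-2} h_{\eps,x}^4$; space-time integration against (\ref{E:b12''}) then delivers the claimed uniform-in-$\varepsilon$ $L^2(0,T_{loc}^{(\alpha)};H^2(\Omega))$ and $L^2(0,T_{loc}^{(\alpha)};W^1_4(\Omega))$ bounds. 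The main technical obstacle is the case $-1/2 < \alpha < 0$: the Cauchy--Schwarz-plus-convex-combination step with the sharp choice $\theta = (1+2\alpha)/(4(1-\alpha))$ is what rescues the sign of the fourth-order dissipation, and it is precisely the collapse of this step at $\alpha = -1/2$ that forces the stated restriction on $\alpha$. Everything else is a fairly standard combination of the Bernis--Friedman entropy machinery with Young/Sobolev estimates on the new convective term driven by $w'$.
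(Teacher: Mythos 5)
Your proof is correct and lands on the same constants $\beta$ and $\gamma$ via the same key identities: $I_1 = -\int h^\alpha h_{xx}^2 - \tfrac{\alpha(1-\alpha)}{3}\int h^{\alpha-2}h_x^4$, the Cauchy--Schwarz bound $\int h^{\alpha-2}h_x^4 \leq \tfrac{9}{(1-\alpha)^2}\int h^\alpha h_{xx}^2$, and the split $\theta=\tfrac{1+2\alpha}{4(1-\alpha)}$ for $\alpha<0$; your bookkeeping of the remaining halving/thirding that produces $\gamma$ is also right. Where you genuinely depart from the paper is in how the estimate is closed. The paper does \emph{not} simply feed the already-established $H^1$ bound into the $\alpha$-entropy inequality. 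Instead it re-derives an $H^1$ estimate starting from (\ref{D:a5}), absorbing the problematic term $\iint h^3 h_x^2$ into the new $\iint h^{\alpha-2}h_x^4$ dissipation (rather than into $\iint h_{xx}^2$ as in the proof of Lemma~\ref{MainAE}), then adds this to the $\alpha$-entropy inequality and applies the nonlinear Gr\"onwall lemma to the blended quantity $\int h_x^2 + G_\eps^{(\alpha)}(h)\,dx$ with exponent $4-\alpha/2$; this blended Gr\"onwall step is precisely what can force a smaller $T_{loc}^{(\alpha)}$ and $\eps_0^{(\alpha)}$. You instead observe that Lemma~\ref{MainAE} already gives a uniform-in-$\eps$ bound on $\int h_{\eps,x}^2$, hence $L^\infty$ control, over $[0,T_{loc}]$, so that after the $\kappa$-absorption the right-hand side of the $\alpha$-entropy inequality is bounded by a constant on $[0,T_{loc}]$, and plain time-integration closes the argument with $T_{loc}^{(\alpha)}=T_{loc}$. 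This is shorter, gives a slightly stronger (non-shrinking) time interval, and makes the logical dependence on Lemma~\ref{MainAE} explicit; the trade-off is that your argument cannot be read independently, whereas the paper's self-contained blended Gr\"onwall computation simultaneously re-establishes the $\int h_x^2$ term appearing on the left of (\ref{E:b12''}). One small quibble of terminology: once you invoke the uniform $H^1$ bound from Lemma~\ref{MainAE}, no Gr\"onwall lemma is actually needed, since the right-hand side is constant in $\int G_\eps^{(\alpha)}$; direct integration suffices.
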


The $\alpha$--entropy, $\int G_0^{(\alpha)}(h) \; dx$, was first
introduced for $\alpha = - 1/2$ in \cite{BertozziBrenner} and an a
priori bound like that of Lemma \ref{MainAE2} and regularity results
like those of Theorem \ref{C:Th1.3} were found simultaneously and
independently in \cite{B2} and \cite{BertPugh1996}.

\subsection{Proof of existence and regularity of solutions}

Bound (\ref{D:a13''}) yields uniform $L^\infty$ control for
classical solutions $h_{\delta \eps}$, allowing the time of
existence $T_{loc,\delta \eps}$ to be taken as $T_{loc}$ for all
$\delta \in (0,\delta_0)$ and $\eps \in (0,\eps_0)$. The existence
theory starts by constructing a classical solution $h_{\delta \eps}$
on $[0,T_{loc}]$ that satisfy the hypotheses of Lemma \ref{MainAE}
if $\delta \in (0,\delta_0)$ and $\eps \in (0,\eps_0)$. The
regularizing parameter, $\delta$, is taken to zero and one proves
that there is a limit $h_\eps$ and that $h_\eps$ is a generalized
weak solution. One then proves additional regularity for $h_\eps$;
specifically that it is strictly positive, classical, and unique. It
then follows that the a priori bounds given by Lemmas \ref{MainAE},
and \ref{MainAE2} apply to $h_\eps$. This allows us to take the
approximating parameter, $\eps$, to zero and construct the desired
generalized weak solution of Theorems \ref{C:Th1} and \ref{C:Th1.3}.

\begin{lemma}
\label{preC:Th1} Assume that the initial data $h_{0,\eps}$ satisfies
(\ref{D:inreg}) and is built from a nonnegative function $h_0$ that
satisfies the hypotheses of Theorem \ref{C:Th1}. Fix  $\delta = 0$
and $\eps \in (0,\eps_0)$ where $\eps_0$ is from Lemma \ref{MainAE}.
Then there exists a unique, positive, classical solution $h_\eps$ on
$[0,T_{loc}]$ of problem ($\mbox{P}_{0, \eps}$), see
(\ref{D:1r'})--(\ref{D:inreg}), with initial data $h_{0,\eps}$ where
$T_{loc}$ is the time from Lemma \ref{MainAE}.
\end{lemma}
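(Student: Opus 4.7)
The plan is to construct $h_\eps$ as the $\delta\to 0$ limit of the regularized classical solutions $h_{\delta,\eps}$ of $(\mathrm{P}_{\delta,\eps})$, and then to exploit the blow-up of the entropy $G_{0,\eps}$ near zero to guarantee strict positivity of the limit.

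First, for each $\delta\in(0,\delta_0)$ the coefficient $f_{\delta,\eps}\geq\delta>0$, so equation (\ref{D:1r'}) is uniformly parabolic and E\u{\i}delman's theorem \cite{Ed} supplies a unique short-time classical solution $h_{\delta,\eps}\in C^{4+\gamma,1+\gamma/4}$. The a priori bound (\ref{D:a13''}) of Lemma \ref{MainAE} is uniform in $\delta$, and via the Sobolev embedding $H^1(\Omega)\hookrightarrow L^\infty(\Omega)$ it forces a uniform $L^\infty$ bound on $h_{\delta,\eps}$. This is exactly the condition needed by the continuation criterion of \cite[Thm.~9.3]{Ed}, together with the Schauder-type interior estimates of \cite[Cor.~2]{Ed}, to extend $h_{\delta,\eps}$ in $C^{4+\gamma,1+\gamma/4}$ to the full interval $[0,T_{loc}]$ with bounds independent of $\delta$.

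Next, I would pass $\delta\to 0$. The bounds (\ref{D:a13''}) and (\ref{BF_entropy}), combined with a direct estimate on $h_{\delta,\eps,t}$ in $L^2(0,T_{loc};(H^1(\Omega))')$ extracted from the PDE, yield equicontinuity of $\{h_{\delta,\eps}\}$ in $C^{1/2,1/8}(\overline{Q}_{T_{loc}})$ via a standard interpolation argument as in Bernis--Friedman \cite{B8}. Arzel\`a--Ascoli then produces a subsequence converging uniformly to some $h_\eps$, and the higher-order bounds pass to the limit in the weak sense. The key step is strict positivity: choosing $G_{\delta,\eps}$ with a common normalization, $G_{\delta,\eps}(z)$ increases pointwise to $G_{0,\eps}(z)$ as $\delta\downarrow 0$, and since $f_{0,\eps}(z)\sim z^4/\eps$ near $z=0$ one has $G_{0,\eps}(z)\to+\infty$ as $z\to 0^+$. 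If $h_\eps$ vanished at some $(x_0,t_0)\in\overline{Q}_{T_{loc}}$, continuity of $h_\eps$ and uniform convergence would make $h_{\delta,\eps}$ arbitrarily small on a fixed neighborhood of $(x_0,t_0)$ for all $\delta$ small, so $\int_\Omega G_{\delta,\eps}(h_{\delta,\eps}(\cdot,t))\,dx$ would diverge on an interval of times, contradicting (\ref{BF_entropy}). Hence $h_\eps>0$ everywhere on $\overline{Q}_{T_{loc}}$.

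Once strict positivity is in hand, there exists $m_\eps>0$ with $h_\eps\geq m_\eps$ on $\overline{Q}_{T_{loc}}$, so the equation reduces to a uniformly parabolic fourth-order PDE with smooth coefficients and standard parabolic Schauder estimates upgrade $h_\eps$ to $C^{4+\gamma,1+\gamma/4}$, i.e.\ to a classical solution of $(\mathrm{P}_{0,\eps})$. Uniqueness among positive classical solutions follows from the standard maximum-principle/energy argument applied to the uniformly parabolic linear equation satisfied by the difference of two such solutions with the same initial data and periodic boundary conditions. The hard part will be the positivity step: making the entropy blow-up argument precise enough to rule out $h_\eps$ touching zero in the limit $\delta\to 0$ with $\eps>0$ kept fixed, which is precisely the role that the Bernis--Friedman ``entropy'' is playing in Lemma \ref{MainAE}.
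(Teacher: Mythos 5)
Your overall strategy coincides with the paper's: take the $\delta\to 0$ limit of the classical solutions $h_{\delta\eps}$, use the blended $H^1$/entropy estimate of Lemma~\ref{MainAE} (and the entropy bound (\ref{BF_entropy})) to deduce strict positivity of the limit, then bootstrap to classical regularity. You do take a mild shortcut: the paper first verifies that $h_\eps$ is a generalized weak solution in the sense of Definition~\ref{B:defweak} (the passage to the limit in the flux term, the $\{h_\eps<\sigma\}$ cutting argument, the weak-$L^2$ limits) before invoking positivity and then uniqueness from \cite[Theorem 4.1]{B8}; you go directly for positivity and then recover the equation from interior Schauder estimates on $\{h_\eps>0\}$. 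That shortcut is legitimate here, since the lemma asserts only a classical solution, not a generalized weak one.

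The genuine gap, which you yourself flag, is the positivity argument. As stated, it does not produce a contradiction. The phrase ``$h_{\delta,\eps}$ arbitrarily small on a \emph{fixed} neighborhood, so $\int_\Omega G_{\delta,\eps}(h_{\delta,\eps})\,dx$ would diverge'' is a box estimate, and a box estimate only yields a \emph{bounded} lower bound. Concretely, if $h_\eps(x_0,t_0)=0$ and $h_\eps\in C^{1/2}$ with Hölder constant $L$, then $h_\eps(x,t_0)\le L|x-x_0|^{1/2}$, so the largest radius on which $h_\eps<\sigma$ is $r(\sigma)\sim(\sigma/L)^2$, while $G_{0\eps}(\sigma)\sim\eps/(6\sigma^2)$; the product $2r(\sigma)\,G_{0\eps}(\sigma)\sim\eps/(3L^2)$ stays bounded as $\sigma\downarrow 0$. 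The divergence comes only from integrating the sharp pointwise profile: using $G_{0\eps}(z)\gtrsim\eps/(6z^2)$ and $h_\eps(x,t_0)\le L|x-x_0|^{1/2}$ one gets
\[
\int_{|x-x_0|<r} G_{0\eps}\bigl(h_\eps(x,t_0)\bigr)\,dx
\;\geq\;
\frac{\eps}{6L^2}\int_{|x-x_0|<r}\frac{dx}{|x-x_0|}
\;=\;\infty,
\]
which contradicts the uniform entropy bound once you pass (\ref{BF_entropy}) to the limit via Fatou (using that, with a suitable common normalization, $G_{\delta\eps}\uparrow G_{0\eps}$ as $\delta\downarrow 0$). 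This is exactly the argument of \cite[Theorem~4.1]{B8} that the paper cites; your write-up needs this sharper step, not the box estimate. A smaller quibble: there is no maximum principle for fourth-order parabolic equations, so for uniqueness you should rely only on the energy argument on the difference, as in \cite[Theorem~4.1]{B8}.
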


The proof uses a number of arguments like those presented by Bernis \& Friedman
\cite{B8} and we refer to that article as much as possible.

\begin{proof}
Fix $\eps \in (0,\eps_0)$ and assume
$\delta \in (0,\delta_0)$.   Because $G_{\delta \eps}(z) \geq 0$,
the bound (\ref{D:a13''}) yields a uniform-in-$\delta$-and-$\eps$ upper bound
on $|h_{\delta \eps}(x,T)|$ for $(x,T) \in \overline{\Omega} \times [0,T_{loc}]$.  As discussed
in Subsection \ref{RegularizedProblem}, this allows the classical solution $h_{\delta \eps}$
to be extended from $[0,\tau_{\delta \eps}]$ to $[0,T_{loc}]$.

By Section 2 of \cite{B8}, the a priori bound (\ref{D:a13''})
on $\| h_x(\cdot,T) \|_2$ implies that $h_{\delta \eps} \in C^{1/2,1/8}_{x,t}(\overline{Q_{T_{loc}}})$
and that $\{ h_{\delta \eps} \}$ is a uniformly bounded, equicontinuous family in $\overline{Q_{T_{loc}}}$.
By the Arzela-Ascoli theorem, there is a subsequence $\{ \delta_k \}$, so that $h_{\delta_k \eps}$
converges uniformly to a limit $h_\eps \in C^{1/2,1/8}_{x,t}(\overline{Q_{T_{loc}}})$.

We now argue that $h_\eps$ is a generalized weak solution, using methods similar to those
of \cite[Theorem 3.1]{B8}.

By construction, $h_\eps$ is in $C^{1/2,1/8}_{x,t}(\overline{Q_{T_{loc}}})$, satisfying
the first part of (\ref{weak1}).  The strong convergence
$h_\eps(\cdot , t) \to h_\eps(\cdot,0)$ in $L^2(\Omega)$ follows immediately.
The uniform convergence of  $h_{\delta_k \eps}$ to
$h_\eps$ implies the pointwise convergence
$h(\cdot,t) \to h(\cdot,0) = h_0$, and so $h_\eps$ satisfies (\ref{ID1}).

Because $h_{\delta \eps}$ is a classical solution,
\begin{align}\notag
& \iint \limits_{Q_T} { h_{\delta \eps} \phi \; dx dt}  -
\iint\limits_{Q_{T_{loc}}}
{ f_{\delta \eps}(h_{\delta \eps}) ( a_0 h_{\delta \eps, xxx} + a_1 h_{\delta \eps, x} + a_2 w'(x))\phi_x \; dx dt } \\
& \hspace{2.5in} - a_3\iint\limits_{Q_{T_{loc}}} {h_{\delta \eps}
\phi_x\; dx dt} = 0. \label{integral_form_classical}
\end{align}

The bound (\ref{D:a13''}) yields a uniform bound
on
$$
\delta \iint\limits_{Q_{T_{loc}}} h_{\delta \eps,xxx}^2 \; dx dt
$$
for $\delta \in (0, \delta_0)$.
It follows that
$$
\delta_k \iint\limits_{Q_{T_{loc}}}
 ( a_0 h_{\delta \eps, xxx} + a_1 h_{\delta \eps, x} + a_2 w'(x))\phi_x\,dx dt   \to 0
 \qquad \mbox{as $\delta_k \to 0$.}
$$
Introducing the notation
\begin{equation} \label{flux_like}
H_{\delta \eps} := f_{\delta \eps}(h_{\delta \eps}) ( a_0 h_{\delta \eps, xxx} + a_1 h_{\delta \eps, x} + a_2 w'(x)) + a_3 h_{\delta \eps}
\end{equation}
the integral formulation (\ref{integral_form_classical}) can be written as
\begin{equation} \label{abstract_integral_form}
\iint \limits_{Q_T} { h_{\delta \eps} \phi dx dt} =
\iint\limits_{Q_{T_{loc}}} H_{\delta \eps}(x,t) \phi_x(x,t) \; dx
dt.
\end{equation}
By the $L^\infty$ control of $h_{\delta \eps}$
and the energy bound (\ref{D:d2}), $H_{\delta \eps}$ is
uniformly bounded in $L^2(Q_{T_{loc}})$.  Taking a further subsequence of $\{ \delta_k \}$
yields $H_{\delta_k \eps}$ converging weakly to a function $H_\eps$ in $L^2(Q_{T_{loc}})$.
The regularity theory for uniformly parabolic equations implies that
$h_{\delta \eps,t}$, $h_{\delta \eps, x}$,
$h_{\delta \eps, xx}$,
$h_{\delta \eps, xxx}$, and
$h_{\delta \eps,xxxx}$ converge uniformly
to $h_{\eps,t}$, \dots, $h_{\eps, xxxx}$
on any compact subset of $\{ h_\eps > 0 \}$, implying (\ref{BC1}) and the first part of (\ref{weak2}).
Note that because the initial data $h_{0,\eps}$ is in $C^4$ the regularity extends all the way
to $t=0$ which is excluded in the definition of $\mathcal{P}$ in (\ref{weak2}).

The energy $\mathcal{E}_{\delta \eps}(T_{loc})$ is not necessarily positive.
However, the a priori bound (\ref{D:a13''}), combined with the $L^\infty$
control on $h_{\delta \eps}$, ensures that
$\mathcal{E}_{\delta \eps}(T_{loc})$ has a uniform lower bound.  As a result,
the bound (\ref{D:d2}) yields a uniform bound on
$$
\iint\limits_{Q_{T_{loc}}} f_{\delta \eps} \left( h_{\delta \eps}) (
a_0 h_{\delta \eps, xxx} + a_1 h_{\delta \eps, x} + a_2 w'(x)
\right)^2 \; dx dt.
$$
Using this, one can argue that for any $0 < \sigma$
$$
\iint\limits_{ \{h_\eps < \sigma \} } \left|  H_{\delta_k \eps} \phi_x \right| \; dx dt
\leq C \sigma^{3/2}
$$
for some $C$ independent of $\delta$, $\eps$, and $\sigma$.   Taking $\delta_k \to 0$
and using that $\sigma$ is arbitrary, we conclude
$$
H_{\delta_k \eps} \to H_\eps =
f_{\eps} \left( h_{\eps}) ( a_0 h_{\eps, xxx} + a_1 h_{\eps, x} + a_2 w'(x) \right) \,
 \chi\lowsub{ \{ h_\eps > 0 \} }
+ a_3 h_\eps.
$$
As a result, taking $\delta_k \to 0$ in (\ref{abstract_integral_form}) implies
$h_\eps$ satisfies (\ref{integral_form}).

The bound (\ref{D:a13''}) yields a uniform bound on $\iint f_{\delta \eps}(h_{\delta \eps})
h_{\delta \eps,xxx}^2$ which can be used in a similar manner as above to argue
that the second part of (\ref{weak2}) holds.  The bound  (\ref{D:a13''}) also yields
a uniform bound on $\int h_{\delta \eps.x}^2(x,T) \; dx$ for every $T \in [0,T_{loc}]$.  As
a result, $\{ h_{\delta_k \eps} \}$ is uniformly bounded in
$$
 L^\infty(0,T_{loc}; H^1(\Omega)).
$$
Therefore, another refinement of the sequence $\{ \delta_k \}$
yields $\{ h_{\delta_k \eps} \}$
weakly convergent in this space.
As a result, $h_\eps
\in  L^\infty(0,T_{loc}; H^1(\Omega))$ and the second part of
 (\ref{weak1}) holds.

Having proven then $h_\eps$ is a generalized weak solution, we now
prove that $h_\eps$ is a strictly positive, classical, unique
solution.  This uses the entropy $\int G_{\delta \eps}(h_{\delta
\eps})$ and the a priori bound (\ref{BF_entropy}). This bound  is,
up to the coefficient $a_0$, identical to the a priori bound (4.17)
in \cite{B8}. By construction, the initial data $h_{0,\eps}$ is
positive (see (\ref{D:inreg})), hence $\int G_{\eps}(h_{0,\eps}) \;
dx < \infty$. Also, by construction $f_\eps(z) \sim z^4$. for $z \ll
1$  This implies that the generalized weak solution $h_\eps$ is
strictly positive \cite[Theorem 4.1]{B8}.  Because the initial data
$h_{0,\eps}$ is in $C^4(\Omega)$, it follows that $h_\eps$ is a
classical solution in $C^{4,1}_{x,t}(\overline{Q_{T_{loc}}})$.  This
implies that $h_{\eps}(\cdot,t) \to h_{\eps}(\cdot,0)$
strongly\footnote{ Unlike the definition of weak solution given in
\cite{B8}, Definition \ref{B:defweak} does not include that the
solution converges to the initial data strongly in $H^1(\Omega)$. }
in $C^1(\bar{\Omega})$.  The proof of  Theorem 4.1 in \cite{B8} then
implies that $h_\eps$ is unique.
\end{proof}

\begin{proof}[Proof of Theorem \ref{C:Th1}]
As in the proof of Lemma \ref{preC:Th1}, following \cite{B8}, there
is a subsequence $\{ \eps_k \}$ such that $h_{\eps_k}$ converges
uniformly to a function $h \in C^{1/2,1/8}_{x,t}$ which is a
generalized weak solution in the sense of Definition \ref{B:defweak}
with $f(h) = |h|^3$.

The initial data is assumed to have finite entropy: $\int 1/h_0 < \infty$.
This, combined with $f(h) = |h|^3$, implies that the
generalized weak solution $h$ is nonnegative and the set of points $\{ h = 0 \}$
in $Q_{T_{loc}}$
has zero measure  \cite[Theorem 4.1]{B8}.

To prove (\ref{C:d2'}), start by taking $T=T_{loc}$ in the a priori
bound (\ref{D:d2}). As $\eps_k \to 0$, the right-hand side of
(\ref{D:d2}) is unchanged. First, consider the $\eps_k \to 0$ limit
of
$$
\mathcal{E}_{\eps_k}(T_{loc})
=
 \tfrac{1}{2}\int\limits_{\Omega} {(a_0
h_{\eps_k,x}^2(x,T_{loc}) - a_1 h_{\eps_k}^2(x,T_{loc}) - 2a_2 w(x)
h_{\eps_k}(x,T_{loc})) \,dx}.
$$
By the uniform convergence of $h_{\eps_k}$ to $h$, the second and
third terms in the energy converge strongly as $\eps_k \to 0$. The
bound (\ref{D:d2}) yields a uniform bound on $\{   \int_\Omega
h_{\eps_k,x}^2(x,T_{loc}) \; dx \}$. Taking a further refinement of
$\{ \eps_k \}$, yields $h_{\eps_k,x}(\cdot,T_{loc})$ converging
weakly in $L^2(\Omega)$.  In a Hilbert space, the norm of the weak
limit is less than or equal to the $\liminf$ of the norms of the
functions in the sequence, hence $ \mathcal{E}_0( T_{loc} ) \leq
\liminf_{\eps_k \to 0} \mathcal{E}_{\eps_k}(T_{loc}). $ A uniform
bound on $\iint f_\eps(h_\eps) \left(a_0 h_{\eps,xxx} + \dots
\right)^2 \; dx$ also follows from (\ref{D:d2}).  Hence
$\sqrt{f_{\eps_k}(h_{\eps_k})} \left(a_0 h_{\eps_k,xxx} + \dots
\right)$ converges weakly in $L^2(Q_{T_{loc}})$, after taking a
further subsequence. It suffices to determine the weak limit up to a
set of measure zero. Because $h \geq 0$ and $\{ h = 0 \}$ has
measure zero, it suffices to determine the weak limit on $\{ h > 0
\}$.  As in the proof of Lemma \ref{preC:Th1}, the regularity theory
for uniformly parabolic equations allows one to argue that the weak
limit is $ h^{3/2} \left(a_0 h_{xxx} + \dots \right)$ on $\{ h > 0
\}$.  Using that 1) the norm of the weak limit is less than or equal
to the $\liminf$ of the norms of the functions in the sequence and
that 2) the $\liminf$ of a sum is greater than or equal to the sum
of the $\liminf$s, results in the desired bound (\ref{C:d2'}).

It follows from (\ref{BF_entropy}) that $h_{\eps_k,xx}$ converges
weakly to some $v$ in $L^2(Q_{T_{loc}})$, combining with strong
convergence in $L^2(0,T; H^1(\Omega ))$ of $h_{\eps_k}$ to $h$ by
Lemma \ref{A.1} and with the definition of weak derivative, we
obtain that $v = h_{xx}$ and $h \in L^2(0,T_{loc};$ $ H^2(\Omega))$
that implies (\ref{Linf_H2}). Hence $h_{\varepsilon,t} \to h_t
\text{ weakly in } L^{2}(0, T; (H^1(\Omega))')$ that implies
(\ref{weak-d}). By Lemma \ref{A.2} we also have $h \in
C([0,T_{loc}],L^2(\Omega))$.
\end{proof}

\begin{proof}[Proof of Theorem \ref{C:Th1.3}]
Fix $\alpha \in (-1/2,1)$.
The initial data $h_0$ is assumed to have finite entropy $\int G_0^{(\alpha)}(h_0(x)) \; dx < \infty$,
hence Lemma \ref{MainAE2} holds for the approximate solutions
$\{ h_{\eps_k} \}$ where this sequence of approximate solutions is assumed
to be the one at the
end of the proof of Theorem \ref{C:Th1}.
By (\ref{E:b13}),
$$
\left\{ h_{\eps_k}^{\tfrac{\alpha+2}{2}} \right\}
\quad \mbox{is uniformly bounded in $\eps_k$ in
$L^{2}(0, T_{loc}; H^2(\Omega))$}
$$
and
$$
\left\{
h_{\eps_k}^{\tfrac{\alpha+2}{4}} \right\}
\quad \mbox{is uniformly bounded in $\eps_k$ in
$L^{2}(0, T_{loc}; W^1_4(\Omega))$}.
$$
Taking a further subsequence in $\{ \eps_k \}$, it follows from the
proof of \cite [Lemma 2.5, p.330]{DGG}, these sequences converge
weakly in $L^{2}(0, T_{loc}; H^2(\Omega))$ and $L^{2}(0, T_{loc};
W^1_4(\Omega))$, to $ h^{\tfrac{\alpha+2}{2}}$ and $
h^{\tfrac{\alpha+2}{4}}$ respectively.
\end{proof}

\section{Long--time existence of solutions} \label{G}

\begin{lemma}\label{Marinas_bound}
Let $h \in H^1(\Omega)$ be a nonnegative function such that $\int
\limits_{\Omega} {h(x)\,dx} = M > 0$. Then

\begin{equation}\label{D:nint}
\| h \|_{L^2(\Omega)}^2 \leqslant 6^{\tfrac{2}{3}}M^{\tfrac{4}{3}}
\biggl(\int\limits_{\Omega} {h^2_{x} \,dx}\biggr)^{\tfrac{1}{3}} +
\tfrac{M ^{2}}{|\Omega|}.
\end{equation}
\end{lemma}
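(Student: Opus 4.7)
The plan is to decompose $h$ into its mean and its oscillation and to reduce the claim to a one-dimensional Poincar\'e--Sobolev bound for mean-zero functions. Setting $\bar h := M/|\Omega|$ and $u := h - \bar h$ gives $\int_\Omega u\,dx = 0$, and the orthogonality of $u$ with the constant yields
$$\|h\|_{L^2(\Omega)}^2 = \bar h^{\,2}|\Omega| + \|u\|_{L^2(\Omega)}^2 = \frac{M^2}{|\Omega|} + \|u\|_{L^2(\Omega)}^2.$$
Hence (\ref{D:nint}) will follow once we prove $\|u\|_{L^2(\Omega)}^2 \leq 6^{2/3} M^{4/3} \|h_x\|_{L^2(\Omega)}^{2/3}$.

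I would then establish two short estimates. First, since $h \in H^1(\Omega) \subset C(\overline{\Omega})$ in one dimension and $u$ has zero mean, the intermediate value theorem supplies some $x_0 \in \overline{\Omega}$ with $u(x_0)=0$. Writing $u^2(x) = 2\int_{x_0}^{x} u\, u_y\, dy$ and applying Cauchy--Schwarz gives the pointwise bound
$$\|u\|_{L^\infty(\Omega)}^2 \leq 2\,\|u\|_{L^2(\Omega)}\,\|h_x\|_{L^2(\Omega)}.$$
Second, the nonnegativity of $h$ forces $\|h\|_{L^1}=M$, so that $\|h\|_{L^2}^2 \leq \|h\|_{L^1}\|h\|_{L^\infty} \leq M\bar h + M\|u\|_{L^\infty}$; comparing with the decomposition above and cancelling $M^2/|\Omega|$ yields $\|u\|_{L^2(\Omega)}^2 \leq M\,\|u\|_{L^\infty(\Omega)}$.

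Combining the two estimates I would conclude
$$\|u\|_{L^2(\Omega)}^4 \leq M^2\|u\|_{L^\infty(\Omega)}^2 \leq 2M^2\|u\|_{L^2(\Omega)}\|h_x\|_{L^2(\Omega)},$$
whence $\|u\|_{L^2(\Omega)}^3 \leq 2M^2\|h_x\|_{L^2(\Omega)}$ and therefore $\|u\|_{L^2(\Omega)}^2 \leq 2^{2/3}M^{4/3}\|h_x\|_{L^2(\Omega)}^{2/3}$, which is in fact strictly sharper than the stated constant $6^{2/3}$. Feeding this back into the orthogonal decomposition recovers (\ref{D:nint}).

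There is no serious obstacle: the only step that uses more than the fundamental theorem of calculus and Cauchy--Schwarz is the inequality $\|u\|_{L^2}^2 \leq M\|u\|_{L^\infty}$, and this hinges on exploiting $h\geq 0$ to identify $\|h\|_{L^1}$ with $M$ rather than merely with $\|u\|_{L^1}+M$. Without nonnegativity one would have to work with $\|u\|_{L^1}\leq 2M$ instead, giving a larger (but still acceptable) constant in front of $M^{4/3}\|h_x\|_{L^2}^{2/3}$.
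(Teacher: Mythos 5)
Your proof is correct and in fact establishes a sharper inequality, with constant $2^{2/3}$ in place of the stated $6^{2/3}$; since $2^{2/3}<6^{2/3}$, this implies (\ref{D:nint}) as written. The route is genuinely different from the paper's. The paper decomposes $h = M/|\Omega| + v$ just as you do, but then invokes the Ladyzhenskaya interpolation inequality (\ref{Lady_ineq}) with $p=2$, $r=1$ to get $\|v\|_2^2 \leq (3/2)^{2/3}\|v_x\|_2^{2/3}\|v\|_1^{4/3}$, and closes by estimating $\|v\|_1 = \int|h - M/|\Omega||\,dx \leq 2M$ via the triangle inequality; the product $(3/2)^{2/3}\cdot 2^{4/3}=6^{2/3}$ is where the paper's constant comes from. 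You instead prove the needed interpolation from scratch (IVT plus fundamental theorem of calculus plus Cauchy--Schwarz to get $\|u\|_\infty^2 \leq 2\|u\|_2\|h_x\|_2$), and, crucially, you apply H\"older to $h$ itself rather than to the oscillation $u$: since $h\geq 0$ forces $\|h\|_{L^1}=M$ exactly, the chain $\|h\|_2^2 \leq M\|h\|_\infty \leq M(M/|\Omega| + \|u\|_\infty)$ lets the $M^2/|\Omega|$ cancel cleanly, and you never need the lossy bound $\|u\|_1\leq 2M$. The payoff is a cleaner, self-contained argument that avoids the external reference to \cite{Lady} and halves the leading constant (your $2^{2/3}\approx 1.59$ versus the paper's $6^{2/3}\approx 3.30$), while preserving the sharp additive term $M^2/|\Omega|$. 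Your closing remark about what happens without nonnegativity is also accurate: dropping $h\geq 0$ forces one back to $\|u\|_1\leq 2M$ and recovers essentially the paper's constant.
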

Note that by taking $h$ to be a constant function, one finds
that the constant $M^2/|\Omega|$ in (\ref{D:nint}) is sharp.

\begin{proof}

Let $v = h - M/|\Omega|$.  By (\ref{Lady_ineq}),
$$
\| v \|_{L^2(\Omega)}^2 \leqslant (\tfrac{3}{2})^{\tfrac{2}{3}}
\biggl(\int\limits_{\Omega} {v^2_{x} \,dx}\biggr)^{\tfrac{1}{3}}
\biggl(\int\limits_{\Omega} {|v| \,dx}\biggr)^{\tfrac{4}{3}} \
$$

Hence
\begin{multline*}
\| h \|_{L^2(\Omega)}^2 \leqslant (\tfrac{3}{2})^{\tfrac{2}{3}}
\biggl(\int\limits_{\Omega} {h^2_{x} \,dx}\biggr)^{\tfrac{1}{3}}
\biggl(\int\limits_{\Omega} {\left|h -  \tfrac{M}{|\Omega|}\right|
\,dx}\biggr)^{\tfrac{4}{3}} +  \tfrac{M^2}{|\Omega|}\leqslant\\
(\tfrac{3}{2})^{\tfrac{2}{3}} \biggl(\int\limits_{\Omega} {h^2_{x}
\,dx}\biggr)^{\tfrac{1}{3}} (2M)^{\tfrac{4}{3}} +
\tfrac{M^2}{|\Omega|}.
\end{multline*}

\end{proof}

Lemma \ref{Marinas_bound}
and the bound (\ref{C:d2'}) are used to prove $H^1$ control of the
generalized weak solution constructed in Theorem \ref{C:Th1}.

\begin{lemma}\label{G:Th1}
Let $h$ be the generalized solution of Theorem~\ref{C:Th1}. Then
\begin{equation}\label{G:1}
\tfrac{a_0}{4} \; \| h(\cdot,T_{loc}) \|_{H^1(\Omega)}^2
\leq
\mathcal{E}_0(0) + K T_{loc} + K_3
\end{equation}
where $\mathcal{E}_0(0)$ is defined in
(\ref{Energy}), $M = \int h_0$,
$K = |a_2 a_3| \| w' \|_\infty C$ and
$$
K_3 =
\begin{cases}
|a_2| \| w \|_\infty M & \mbox{ if } a_0 + a_1 \leq 0, \\
|a_2| \| w \|_\infty M +  M^2 \,\left( \frac{2
\sqrt{6}\,(a_0+a_1)^{3/2}}{3\sqrt{a_0}} + \tfrac{a_0 +
a_1}{2|\Omega|} \right) & \mbox{ otherwise}.
\end{cases}
$$
\end{lemma}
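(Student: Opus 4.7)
The plan is to derive the inequality directly from the energy bound (\ref{C:d2'}), combined with conservation of mass and the interpolation of Lemma \ref{Marinas_bound}. I discard the nonnegative dissipation integral in (\ref{C:d2'}) to obtain $\mathcal{E}_0(T_{loc}) \le \mathcal{E}_0(0) + K\,T_{loc}$. Unpacking the definition of $\mathcal{E}_0$ from (\ref{Energy}) and adding $\tfrac{a_0}{2}\int_\Omega h^2\,dx$ to both sides of the resulting identity for $\tfrac{a_0}{2}\int h_x^2\,dx$ gives
\[
\tfrac{a_0}{2}\|h(\cdot,T_{loc})\|_{H^1(\Omega)}^2 \le \mathcal{E}_0(0) + K\,T_{loc} + \tfrac{a_0+a_1}{2}\!\!\int\limits_\Omega \!\! h^2(x,T_{loc})\,dx + a_2\!\!\int\limits_\Omega \!\! w(x)\,h(x,T_{loc})\,dx.
\]
Testing the weak formulation (\ref{integral_form}) with $\phi \equiv 1$ yields mass conservation $\int_\Omega h(\cdot,T_{loc})\,dx = M$, and since $h \ge 0$ this gives $|a_2\int w h| \le |a_2|\|w\|_\infty M$.

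If $a_0+a_1 \le 0$ the $\int h^2$ term above is nonpositive and I conclude $\tfrac{a_0}{2}\|h\|_{H^1}^2 \le \mathcal{E}_0(0) + K T_{loc} + |a_2|\|w\|_\infty M$. Since $\tfrac{a_0}{4} \le \tfrac{a_0}{2}$, this already yields the stated bound with $K_3 = |a_2|\|w\|_\infty M$.

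If $a_0+a_1 > 0$, I apply Lemma \ref{Marinas_bound} to get $\int h^2 \le 6^{2/3} M^{4/3}(\int h_x^2)^{1/3} + M^2/|\Omega|$. The $\tfrac{a_0+a_1}{2}$ prefactor then produces a term $B\,(\int h_x^2)^{1/3}$ with $B = \tfrac{a_0+a_1}{2}\cdot 6^{2/3} M^{4/3}$, which I bound by Young's inequality with conjugate exponents $(3/2,3)$, tuning the free scaling parameter so that the absorbed piece is exactly $\tfrac{a_0}{4}\int h_x^2$. Moving that to the left gives $\tfrac{a_0}{4}\int h_x^2 + \tfrac{a_0}{2}\int h^2 \ge \tfrac{a_0}{4}\|h\|_{H^1}^2$, while the Young residual evaluates (after a short calculation with $B^{3/2} = \tfrac{3(a_0+a_1)^{3/2}}{\sqrt 2} M^2$) to $\tfrac{2\sqrt{6}(a_0+a_1)^{3/2}}{3\sqrt{a_0}}\,M^2$; the $M^2/|\Omega|$ tail contributes the remaining $\tfrac{a_0+a_1}{2|\Omega|}\,M^2$. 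These together reproduce the stated $K_3$.

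The main obstacle is purely bookkeeping: picking the Young scaling so that the absorbed coefficient is precisely $a_0/4$, and then threading the constants through to reproduce the exact factor $\tfrac{2\sqrt{6}(a_0+a_1)^{3/2}}{3\sqrt{a_0}}$. The analytic ingredients---energy dissipation (\ref{C:d2'}), mass conservation, and the interpolation Lemma \ref{Marinas_bound}---are all already in hand, so there is no conceptual difficulty beyond this arithmetic.
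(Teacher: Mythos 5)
Your proposal is correct and follows essentially the same route as the paper's own proof: discard the nonnegative dissipation integral in (\ref{C:d2'}), unpack the definition of $\mathcal{E}_0$ to isolate the $H^1$ norm, use mass conservation to bound the $a_2\int wh$ term by $|a_2|\|w\|_\infty M$, and then split on the sign of $a_0+a_1$, invoking Lemma~\ref{Marinas_bound} together with Young's inequality in the second case. Your arithmetic for the Young constant (giving $\tfrac{2\sqrt{6}(a_0+a_1)^{3/2}}{3\sqrt{a_0}}M^2$) matches (\ref{young_marina}), so the argument reproduces the paper's proof in full.
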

Note that if the evolution is missing either linear or nonlinear
advection ($a_2 = 0$ or $w' = 0$ or $a_3=0$) then Lemma \ref{G:Th1}
provides a uniform-in-time upper bound for
$\|h(\cdot,T_{loc})\|_{H^1}$.

For the equation (\ref{A:PukhEq}) which models the flow of a thin
film of liquid on the outside of a rotating cylinder one has $a_0 =
a_1 = \tfrac{\chi}{3}$, $a_2 = - \tfrac{\mu}{3}$, $a_3 = 1$, $w(x) =
\sin x$, and $|\Omega|=2\pi$. In this case, the $H^1$ bound
(\ref{G:1}) becomes
$$
\tfrac{\chi}{12} \|  h(.,T_{loc}) \|^2_{H^1(\Omega)} \leqslant
\mathcal{E}_0(0) + \tfrac{\mu}{3} C T_{loc} + \tfrac{\mu}{3} M + M^2
\left( \tfrac{8}{3} \sqrt{\chi}  + \tfrac{\chi}{6 \pi} \right)
$$
where $2 \mathcal{E}_0(0) = \int (\chi/3 \: (h_{0,x}^2 - h_0^2) +
2\mu/3 \: \sin(x) \: h_0 \;) dx$. The $H^1$ bound (\ref{G:1})
actually holds true for all times for which $h$ is strictly
positive. Recalling the definition (\ref{params}) of $\chi$, one
sees that the $H^1$ control is lost as $\chi \to 0$ (i.e. as
$\sigma/(\nu \rho R \omega) \to 0$); for example in the zero surface
tension limit.

\begin{proof}
By (\ref{Energy}),
$$
\tfrac{a_0}{2} \int\limits_\Omega h_x^2(x,T) \; dx =
\mathcal{E}_0(T) + \tfrac{a_1}{2} \int\limits_\Omega h^2(x,T) \; dx
+ a_2 \int\limits_\Omega h(x,T) \, w(x) \; dx.
$$
The linear--in--time bound
(\ref{C:d2'}) on $\mathcal{E}_0(T_{loc})$
then implies
\begin{equation}
\tfrac{a_0}{2} \| h(\cdot,T_{loc}) \|_{H^1}^2
\leq
\mathcal{E}_0(0) +  K \,
 T_{loc} + \tfrac{a_0+a_1}{2} \int\limits_\Omega h^2
\; dx + |a_2| \| w \|_\infty M.
\label{general_bound}
\end{equation}
with $K = |a_2 a_3| \| w' \|_\infty C$.

\noindent
{\it \underline{Case 1: $a_0 + a_1 \leq 0 \;$}}
The third term on the right-hand side of
(\ref{general_bound}) is nonpositive and can be removed.  The desired
bound (\ref{G:1}) follows immediately. \\

\noindent {\it \underline{Case 2: $a_0 + a_1 > 0 \;$}} By Lemma
\ref{Marinas_bound} and Young's inequality (\ref{Young})
\begin{align} \notag
& \tfrac{a_0+a_1}{2} \int\limits_\Omega h^2 \; dx \leq
\tfrac{a_0+a_1}{2}
\left( 6^{\tfrac{2}{3}}M^{\tfrac{4}{3}}
\biggl(\int\limits_{\Omega} {h^2_{x} \,dx}\biggr)^{\tfrac{1}{3}} +
\tfrac{M ^{2}}{|\Omega|} \right) \\
& \hspace{.2in} \leq \tfrac{a_0}{4} \int\limits_\Omega
h_x^2(x,T_{loc}) \; dx + M^2\,
\left(\tfrac{2\sqrt{6}(a_0+a_1)^{3/2}}{3\sqrt{a_0}} +
\tfrac{a_0+a_1}{2|\Omega|} \right). \label{young_marina}
\end{align}
Using this in (\ref{general_bound}), the desired
bound (\ref{G:1}) follows immediately.
\end{proof}

This $H^1$ control in time of the generalized solution is now used
to extend the short--time existence result of Theorem \ref{C:Th1}
to a long--time existence result:
\begin{theorem} \label{global}
Let $T_g$ be an arbitrary positive finite number. The generalized
weak solution $h$ of Theorem \ref{C:Th1} can be continued in time
from $[0,T_{loc}]$ to $[0,T_g]$ in such a way that $h$ is also a
generalized weak solution and satisfies all the bounds of Theorem
\ref{C:Th1} (with $T_{loc}$ replaced by $T_g$).
\end{theorem}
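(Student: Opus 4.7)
The plan is to extend the solution by iterating Theorem \ref{C:Th1}: using $h(\cdot, T_{loc})$ as new initial data, apply the short--time existence result to extend the solution further, and repeat. The crux is to show that the resulting sequence of step sizes $T_{loc,k}$ admits a uniform positive lower bound on $[0, T_g]$, so that after finitely many iterations the solution covers all of $[0, T_g]$.

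First I would verify that at each restart, $h(\cdot, T_k)$, where $T_k = \sum_{j < k} T_{loc,j}$, satisfies the hypotheses of Theorem \ref{C:Th1}. Nonnegativity and the periodic boundary conditions come from the construction and from (\ref{BC1}); the $H^1$ regularity follows from (\ref{C:a2-new}) and Lemma \ref{G:Th1}; and the finite--entropy condition $\int_\Omega 1/h(\cdot, T_k)\, dx < \infty$ follows from the entropy bound (\ref{BF_entropy}) of Lemma \ref{MainAE}, since $G_{\delta\eps}(z) \sim 1/(2z)$ near $z=0$ and the bound is preserved under the weak limit $\eps_k \to 0$ performed in the proof of Theorem \ref{C:Th1}. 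Next, by Theorem \ref{C:Th1}, each step size $T_{loc,k}$ is determined by the fixed constants $a_0, a_1, a_2, \|w'\|_2, \|w'\|_\infty, |\Omega|$ together with the data quantities $\int h(\cdot, T_k)\, dx$, $\|h_x(\cdot, T_k)\|_2$, and $\int_\Omega 1/h(\cdot, T_k)\, dx$. Mass conservation (obtained by integrating (\ref{A:mainEq}) over $\Omega$ and exploiting periodicity) makes the first independent of $k$; Lemma \ref{G:Th1} gives at--most--linear growth of $\|h_x\|_2$, so the second is uniformly bounded on $[0, T_g]$; and the entropy bound (\ref{BF_entropy}) yields uniform control of the third on $[0, T_g]$. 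Together, these give a uniform $\tau > 0$ with $T_{loc,k} \geq \tau$, so $\lceil T_g/\tau \rceil$ iterations suffice to cover $[0, T_g]$. Gluing the successive pieces produces a single generalized weak solution in the sense of Definition \ref{B:defweak}, since the integral formulation (\ref{integral_form}) is additive in time and the regularity and boundary--trace conditions match at each junction.

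Finally I would propagate the bounds (\ref{C:d2'}) and (\ref{C:a2-new}) to $T_g$. The energy bound (\ref{C:d2'}) is additive in time: summing over the successive subintervals telescopes the $\mathcal{E}_0$ terms on the two sides and produces the desired inequality with $T_{loc}$ replaced by $T_g$. The $H^1$ bound (\ref{C:a2-new}) chains multiplicatively, and since $B_0(\cdot)$ is additive in its upper limit, iterating yields $\int h_x^2(x,T_g)\, dx \leq e^{B_0(T_g)} \int h_{0x}^2\, dx$; here $B_0(T_g)$ is finite because Sobolev embedding combined with Lemma \ref{G:Th1} gives $\int_0^{T_g} \|h(\cdot,t)\|_\infty^3\, dt < \infty$. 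I expect the main technical obstacle to be verifying uniform control of $\int 1/h(\cdot, t)\, dx$ on $[0, T_g]$: the entropy estimate (\ref{BF_entropy}) gives this in principle, but one must check from the (unreproduced) proof of Lemma \ref{MainAE} that $K_2$ depends on $T$ only through quantities that grow in a locally controlled way, so that it does not blow up as the iteration proceeds and the step sizes are genuinely bounded below.
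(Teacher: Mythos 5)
Your outline matches the paper's strategy: iterate the local existence theory, checking at each restart that the mass, $\|h_x\|_2$, and $\int 1/h$ are controlled, so that the step lengths cannot collapse to zero in finite time. The minor stylistic difference is that the paper runs the argument by contradiction (assuming $\sum T_{n,loc} = T^* < \infty$ and deriving a contradiction from the implied blow-up of $\int h_x^2 + G_0(h)$) rather than claiming a uniform lower bound on the step lengths directly, but these are essentially equivalent once the a priori control is in hand.

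The genuine gap is exactly the one you flag at the end and then leave open: you need a \emph{linear-in-time} (or at least locally bounded) bound on $\int G_0(h(\cdot,t))\,dx$, not just the short-time constant $K_2$ from Lemma~\ref{MainAE}. The bound (\ref{BF_entropy}) as stated gives a constant valid only on $[0,T_{loc}]$, and it is not automatic that iterating it keeps the entropy finite. The paper supplies the missing ingredient: the recursive inequality
\[
\int_\Omega G_\eps(h_\eps(x,T_{n,\eps}))\,dx
\le \int_\Omega G_\eps(h_\eps(x,T_{n-1,\eps}))\,dx
+ c_8 \int_{T_{n-1,\eps}}^{T_{n,\eps}} \max\Bigl\{1,\int_\Omega h_{\eps,x}^2\Bigr\}\,dT,
\]
which is the instance of (\ref{D:aa2}) on the $n$th step. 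Crucially this must be applied at the level of the approximate solutions $h_\eps$ (the entropy inequality is derived by testing the regularized PDE against $G'_{\delta\eps}(h)$ and does not survive directly to the weak limit), combined with the linear-in-time $H^1$ control to bound the right-hand side, summed over $n$, and only then passed to the limit $\eps_k\to 0$ to conclude $\int G_0(h(\cdot,T_n))\,dx$ grows at most quadratically in $T_n$. Without this chain of steps, the claim that $\int 1/h(\cdot,t)\,dx$ is ``uniformly controlled on $[0,T_g]$'' is an assertion rather than a proof, and the uniform lower bound $\tau$ you invoke is unjustified. So: right approach, but the step you yourself identify as the main obstacle is the heart of the proof and needs to be carried out, not just flagged.
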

Similarly, the short--time existence of strong solutions (see Theorem
\ref{C:Th1.3}) can be extended to a long--time existence.

\begin{proof}  To construct a weak solution up to time $T_g$, one applies
the local existence theory iteratively, taking the solution at the final time of
the current time interval as initial data for the next time interval.

Introduce the times
\begin{equation} \label{D:time_ints}
0 = T_0 < T_1 < T_2 < \dots < T_N < \dots \quad \mbox{where} \quad
T_N := \sum_{n=0}^{N-1} T_{n,loc}
\end{equation}
and $T_{n,loc}$ is the interval of existence (\ref{Tloc_is})
for a solution with initial data $h(\cdot,T_n)$:
\begin{equation} \label{D:Tloc_is}
T_{n,loc} := \tfrac{9}{40 c_9}
\min\left\{
1,   \left(  \int\limits_\Omega h_x^2(x,T_n)
+ 2 \tfrac{c_3}{a_0} G_0(h(x,T_n)) \; dx\right)^{-2}
\right\}.
\end{equation}

The proof proceeds by contradiction.  Assume there
exists initial data $h_0$, satisfying the hypotheses of Theorem \ref{C:Th1},
that results in a
weak solution that cannot be extended arbitrarily in time:
$$
\sum_{k=0}^{\infty} T_{n,loc} = T^* < \infty
\quad
\Longrightarrow \quad \lim_{n \to \infty} T_{n,loc} = 0.
$$
From the definition (\ref{D:Tloc_is}) of $T_{n,loc}$, this implies
\begin{equation} \label{D:diverges}
\lim_{n \to \infty}
 \int\limits_\Omega (h_x^2(x,T_n)
+ 2 \tfrac{c_3}{a_0} G_0(h(x,T_n))) \; dx = \infty.
\end{equation}
%
By (\ref{G:1}),
$$
\tfrac{a_0}{4} \int\limits_\Omega h_x^2(x,T_n) \; dx
\leq \mathcal{E}_0(T_{n-1})
+  K \, T_{n-1,loc} + K_3.
$$
By (\ref{C:d2'}),
$$
\mathcal{E}_0(T_{n-1})
\leq \mathcal{E}_0(T_{n-2}) + K \; T_{n-2,loc}.
$$
Combining these,
$$
\tfrac{a_0}{4} \int\limits_\Omega h_x^2(x,T_n) \; dx
\leq
\mathcal{E}_0(T_{n-2})  + K  \left( T_{n-2,loc} + T_{n-1,loc} \right)
 + K_3 .
$$
Continuing in this way,
\begin{equation}
\tfrac{a_0}{4} \int\limits_\Omega h_x^2(x,T_n) \; dx \label{D:here}
\leq \mathcal{E}_0(0)   + K  \, T_n + K_3
\end{equation}

By assumption, $T_n \to T^* < \infty$ as $n \to \infty$ hence
$\int h_x^2(x,T_n)$ remains bounded.
Assumption (\ref{D:diverges}) then implies that
$\int G_0(h(x,T_n))  \to \infty$ as $n \to \infty$.

To continue the argument, we step back to the approximate solutions
$h_\eps$.  Let $T_{n,\eps}$ be the analogue of $T_n$ and
$T_{n,loc,\eps}$, defined by (\ref{Tloc_eps_is}), be the analogue of
$T_{n,loc}$ By (\ref{D:aa2}),
\begin{align} \label{D:2}
& \int\limits_\Omega G_\eps(h_\eps(x,T_{n,\eps})) \; dx
\leq \int\limits_\Omega G_\eps(h_\eps(x,T_{n-1,\eps})) \; dx \\
& \hspace{2in} \notag + c_8 \int\limits_{T_{n-1,\eps}}^{T_{n,\eps}}
\max \left\{ 1, \int\limits_\Omega h_{\eps,x}^2(x,T) \; dx \right\}
\; dT
\end{align}
Using the bound (\ref{D:d2}), one can prove the analogue of Lemma \ref{G:Th1} for
the approximate solution $h_\eps$.  However the bound (\ref{G:1}) would be replaced
by a bound on $\| h_\eps(\cdot,T) \|_{H^1}$ which holds for all $T \in [0,T_{\eps,loc}]$.  This bound
would then be used to prove a bound like (\ref{D:here}) to prove linear-in-time
control of $\int h_{\eps,x}^2(x,T)$ for all $T \in [0,T_{n,\eps}]$.  Using this bound,

\begin{align}
\notag & \int\limits_{T_{n-1,\eps}}^{T_{n,\eps}} \int\limits_\Omega
h_{\eps,x}^2(x,T) \; dx dT \leq
\tfrac{4}{a_0} \int\limits_{T_{n-1,\eps}}^{T_{n,\eps}} \left( \mathcal{E}_\eps(0) + K \, T + K_3 \right) \,dT \\
& \hspace{1in} \label{D:3}
=
\tfrac{4}{a_0}
\left[ \mathcal{E}_\eps(0) + K_3 + \tfrac{K}{2} \left( T_{n-1,\eps} + T_{n,\eps} \right)
\right] \, T_{n-1,loc,\eps}.
\end{align}
If the initial
data is such that $4/a_0 \; (\mathcal{E}_\eps(0) + K_3 ) < 1$ then before using
(\ref{D:3}) in (\ref{D:2}) we
replace $K_3$ by a larger value so that $4/a_0 \; (\mathcal{E}_\eps(0) + K_3 )>1$.
Using (\ref{D:3}) in (\ref{D:2}), it follows that
\begin{align} \label{D:4}
& \int\limits_\Omega G_\eps(h_\eps(x,T_{n,\eps})) \; dx \\
& \hspace{.5in} \notag
\leq \int\limits_\Omega G_\eps(h_\eps(x,T_{n-1,\eps})) \; dx
+ \left( \alpha + \beta (T_{n-1,\eps} + T_{n,\eps}) \right) \, T_{n-1,loc,\eps}
\end{align}
for some $\alpha$ and $\beta$ which are fixed values that depend on $|\Omega|$,
the
coefficients of the PDE,  and (possibly) on the initial data $h_{0,\eps}$.

One now takes the sequence $\{ \eps_k \}$ that was used to construct the
weak solution of Theorem \ref{C:Th1} on the interval $[T_{n-1},T_n]$.  Taking
$\eps_k \to 0$, (\ref{D:4}) yields
\begin{equation}
\label{D:4a} \int\limits_\Omega G_0(h(x,T_{n}))\; dx \leq
\int\limits_\Omega G_0(h(x,T_{n-1}))\,dx + \left( \alpha + \beta
(T_{n-1} + T_{n}) \right) \, T_{n-1,loc}.
\end{equation}

Applying
(\ref{D:4a}) iteratively,
\begin{align*}
&  \int\limits_\Omega G_0(h(x,T_n)) \; dx
\leq  \int\limits_\Omega
G_0(h(x)) \; dx
+ \sum_{k=0}^{n-1} T_{k,loc} \left( \alpha + \beta \left( T_k + T_{k+1} \right)  \right)\\
& \hspace{1in} \leq  \int\limits_\Omega G_0(h_0(x)) \; dx
+ \sum_{k=0}^{n-1} T_{k,loc} \left( \alpha +  2 \,\beta \, T^*  \right) \\
& \hspace{1in} =  \int\limits_\Omega G_0(h_0(x)) \; dx + \left(
\alpha +  2 \,\beta \, T^*\right) T_n.
\end{align*}
This upper bound proves that $\int G_0(h(x,T_n))$ cannot diverge to infinity
as $n \to \infty$, finishing the proof.

\end{proof}

Under certain conditions, a bound closely related to (\ref{G:1})
implies that if the solution of Theorem \ref{C:Th1} is initially
constant then it will remain constant for all time:
\begin{theorem}\label{constancy}
Assume the coefficients $a_1$ and $a_2$ in (\ref{A:mainEq}) satisfy
$a_1 \geq 0$, $a_2=0$ and $| \Omega | < 4 a_0/|a_1|$.  If the
initial data is constant, $h_0 \equiv C > 0$, then the solution of
Theorem \ref{C:Th1} satisfies $h(x,t) = C$ for all $x \in
\bar{\Omega}$ and all $t > 0$.
\end{theorem}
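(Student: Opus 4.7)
The plan is to combine the energy identity (\ref{C:d2'}), now specialized to constant initial data, with a Poincar\'e-type inequality for mean-zero periodic functions. First, mass conservation gives $\int_{\Omega} h(x,T)\,dx = C|\Omega|$ for every $T$. Second, since $a_2 = 0$ the constant $K$ appearing in (\ref{C:d2'}) vanishes (it is proportional to $|a_2 a_3|$), and at $h_0 \equiv C$ the initial energy collapses to $\mathcal{E}_0(0) = -\tfrac{1}{2} a_1 C^2 |\Omega|$. Combining these with the identity $\int_\Omega h^2\,dx - C^2|\Omega| = \int_\Omega (h-C)^2\,dx$ (which uses mass conservation) recasts (\ref{C:d2'}) in the clean form
$$
\tfrac{a_0}{2} \int_\Omega h_x^2(x,T)\,dx + \iint_{\{h>0\}} h^3\bigl(a_0 h_{xxx} + a_1 h_x\bigr)^2 \,dx\,dt \;\leq\; \tfrac{a_1}{2} \int_\Omega (h - C)^2(x,T)\,dx.
$$

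Next I would invoke a Poincar\'e-type bound $\int_\Omega (h-C)^2\,dx \leq \kappa(|\Omega|) \int_\Omega h_x^2\,dx$ valid for the mean-zero periodic function $h - C$, obtained by choosing a point $x_0$ with $h(x_0)=C$ (which exists by the intermediate value theorem for a zero-mean periodic function) and estimating $(h(x)-C)^2 \leq |x-x_0| \int_\Omega h_y^2\,dy$ along the shorter arc of the circle. The smallness hypothesis $|\Omega| < 4 a_0/|a_1|$ is precisely the threshold that delivers $a_1 \kappa(|\Omega|) < a_0$, so that, combined with $a_1 \geq 0$, the displayed inequality forces $\int_\Omega h_x^2(x,T)\,dx \leq 0$. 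Hence $h(\cdot,T)$ is spatially constant for every $T$, and mass conservation identifies this constant as $C$.

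The main obstacle is pinning down the Poincar\'e constant $\kappa(|\Omega|)$ to realize exactly the stated threshold; this is bookkeeping with arc-length estimates rather than a conceptual issue. A clean alternative that sidesteps the inequality entirely is to observe that for constant initial data the approximating data $h_{0,\delta\eps} \equiv C + \eps^\theta$ built in (\ref{D:inreg}) is itself spatially constant, so the constant function $h_{\delta\eps}(x,t) \equiv C + \eps^\theta$ solves the uniformly parabolic regularized problem $(\textup{P}_{\delta,\eps})$; uniqueness of classical solutions to the regularized equation then forces $h_{\delta\eps}$ to be this constant for every admissible $\delta,\eps$, and passing to the limits $\delta,\eps \to 0$ used in the construction of Theorem \ref{C:Th1} immediately yields $h \equiv C$.
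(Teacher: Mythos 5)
Your proposal actually contains two different proofs, and they have quite different standing.

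Your main (Poincar\'e) argument follows the same route the paper takes: pass from the energy bound to a bound on $\int (h - C)^2$, then use a Poincar\'e inequality for the zero-mean part to absorb that term into $\frac{a_0}{2}\int h_x^2$ when $a_1 \geq 0$ and $|\Omega|$ is small. The paper does this at the level of the approximate solutions $h_\eps$ using (\ref{D:d2}) rather than the passed-to-limit bound (\ref{C:d2'}), and it is worth noting why: the remark following Theorem \ref{C:Th1} explicitly cautions that (\ref{C:d2'}) and (\ref{C:a2-new}) only hold at $T_{loc}$ (or along a countable collection of times chosen in advance), so using (\ref{C:d2'}) at an arbitrary $T$ as your display does is not literally available. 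Working with (\ref{D:d2}) for $h_\eps$ on $[0,T_{\eps,loc}]$ and then taking $\eps_k \to 0$, as the paper does, avoids this issue. Your worry about ``pinning down $\kappa(|\Omega|)$'' is justified but in a different way than you think: with the paper's Poincar\'e constant $b_1 = |\Omega|^2/4$ (or your arc-length derivation, which gives the same $|\Omega|^2/4$), the argument actually requires $|\Omega|^2 < 4a_0/|a_1|$, not $|\Omega| < 4a_0/|a_1|$ as stated; this appears to be an inconsistency between the theorem statement and the proof, not something a sharper Poincar\'e constant will fix.

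Your ``clean alternative'' is, in my view, the better argument, and it is genuinely different from the paper's. When $a_2 = 0$, the constant $C + \eps^\theta$ is an exact classical solution of the uniformly parabolic regularized problem $(\textup{P}_{\delta,\eps})$ (the only $x$-dependent source term is $f_{\delta\eps}(h)\,a_2 w''(x)$, which drops out). Uniqueness of classical solutions of $(\textup{P}_{\delta,\eps})$ then forces $h_{\delta\eps} \equiv C + \eps^\theta$, and since the constructed weak solution is the uniform limit of these constants along the specific subsequences used in Theorem \ref{C:Th1}, one gets $h \equiv C$ on $Q_{T_{loc}}$ directly (and on all of $[0,T_g]$ by iterating). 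This is cleaner and strictly stronger: it makes no use of $a_1 \geq 0$ or of any smallness condition on $|\Omega|$, only $a_2 = 0$. The hypotheses $a_1 \geq 0$ and $|\Omega| < 4a_0/|a_1|$ are artefacts of the Poincar\'e-based estimate and are not needed to show that the particular weak solution constructed in Theorem \ref{C:Th1} remains constant; they would only be relevant if one wanted to argue about the behaviour of arbitrary weak solutions (where non-uniqueness could in principle be an issue), which is not what the theorem claims.
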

The hypotheses of Theorem \ref{constancy} correspond to: the equation is
long--wave unstable ($a_1 > 0$), there is no nonlinear advection
($a_2 = 0$), and the domain is not ``too large''.

\begin{proof}
Consider the approximate solution $h_\eps$.
The definition of $\mathcal{E}_\eps(T)$
combined with the linear-in-time bound  (\ref{D:d2}) implies
\begin{equation} \label{gen_bound2}
\tfrac{a_0}{2} \int\limits_\Omega h_{\eps,x}^2(x,T) \; dx \leq
\mathcal{E}_\eps(0) + K  \, T  + \tfrac{|a_1|}{2} \int\limits_\Omega h_\eps^2 \;
dx
+ | a_2 | \| w \|_\infty M_\eps
\end{equation}
where $M_\eps = \int h_{0,\eps}\,dx$. Applying Poincar\'e's
inequality (\ref{Poincare}) to $v_\eps = h_\eps - M_\eps/|\Omega|$
and using $\int h_\eps^2 \, dx= \int v_\eps^2 \, dx +
M_\eps^2/|\Omega|$ yields
$$
\left(
\tfrac{a_0}{2} - \tfrac{|a_1| \, |\Omega|^2}{8}
\right)
\int\limits_\Omega h_{\eps,x}^2(x,t) \; dx
\leq
\mathcal{E}_\eps(0)  + K \, T_{\eps,loc}  + \tfrac{|a_1| M_\eps^2}{2 |\Omega|}
+ | a_2 | \| w \|_\infty M_\eps.
$$
If $h_{0,\eps} \equiv C_\eps = C + \eps^\theta$ and $a_2 = 0$ (hence $K = 0$) this becomes
$$
\left(
\tfrac{a_0}{2} - \tfrac{|a_1| |\Omega|^2}{8}
\right)
\int\limits_\Omega h_{\eps,x}^2(x,T) \; dx
\leq (a_1 - |a_1|) \tfrac{C^2 |\Omega|}{2}.
$$
If $a_1 \geq 0$ and $|\Omega| < 4 a_0/a_1$ then
$\int h_{\eps,x}^2(x,T) \; dx = 0$
for all $T \in [0,T_{\eps,loc}]$ and that this,
combined with the continuity in space and time of $h_\eps$,
implies that $h_\eps \equiv C_\eps$ on $Q_{T_{\eps,loc}}$.

Taking the sequence $\{ \eps_k \}$ that yields convergence to the
solution $h$ of Theorem \ref{C:Th1}, $h \equiv C$ on $Q_{T_{loc}}$.

\end{proof}

\section{Strong positivity of solutions} \label{F}

\begin{proposition}\label{F:LemPos}
Assume the initial data $h_0$ satisfies
$h_0(x) > 0 $ for all
$x \in \omega \subseteq \Omega$ where $\omega$ is an open
interval.
Then the weak solution $h$ from Theorem~\ref{C:Th1}
satisfies:

1) $h(x,T) > 0$ for almost every $x \in \omega$, for all $T \in [0,T_{loc}]$

2) $h(x,T) > 0$ for all $x \in \omega$,  for almost every $T \in [0,T_{loc}]$
\end{proposition}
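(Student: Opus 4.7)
The plan is to propagate the a priori entropy bound \eqref{BF_entropy} of Lemma~\ref{MainAE} to the weak solution $h$ of Theorem~\ref{C:Th1}. Writing out \eqref{D:reg2} explicitly with $f_\eps(z) = z^4/(z+\eps)$, one computes $G_\eps(z) = 1/(2z) + \eps/(6z^2)$ (up to harmless linear terms chosen to enforce $G_\eps \ge 0$). Thus $G_\eps(z) \to G_0(z) = 1/(2z)$ pointwise on $(0,\infty)$, and $G_\eps(z_n) \to +\infty$ whenever $z_n \to 0^+$. Along the subsequence $h_{\eps_k} \to h$ (uniform on $\overline{Q_{T_{loc}}}$) furnished by the proof of Theorem~\ref{C:Th1}, the integrand $G_{\eps_k}(h_{\eps_k}(\cdot,T))$ converges pointwise to $G_0(h(\cdot,T))$ for every fixed $T$. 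Fatou's lemma then yields
$$
\int\limits_{\Omega} \tfrac{1}{2\,h(x,T)}\,dx \;\leq\; \liminf_{k\to\infty} \int\limits_{\Omega} G_{\eps_k}(h_{\eps_k}(x,T))\,dx \;\leq\; K_2
$$
for every $T \in [0,T_{loc}]$.

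Part (1) is then immediate: a nonnegative function whose reciprocal is integrable over $\Omega$ must be strictly positive almost everywhere in $\Omega$, hence almost everywhere in $\omega$. For part (2) I would combine this with the regularity \eqref{Linf_H2}, $h \in L^2(0,T_{loc};H^2(\Omega))$, which together with the one-dimensional Sobolev embedding $H^2 \hookrightarrow C^1(\bar{\Omega})$ gives $h(\cdot,T) \in C^1(\bar{\Omega})$ for almost every $T$. Fix such a $T$ at which also $\int_\Omega 1/h(x,T)\,dx < \infty$ (still a full-measure set), and suppose for contradiction that $h(x_0,T) = 0$ at some $x_0 \in \omega$. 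Since $\omega$ is open in $\Omega$, the point $x_0$ is an interior minimum of the nonnegative $C^1$ function $h(\cdot,T)$, so $h_x(x_0,T) = 0$. Taylor's theorem gives $h(x,T) = o(|x-x_0|)$ as $x \to x_0$, so $1/h(x,T) \ge 1/(\lambda |x-x_0|)$ on a neighborhood of $x_0$ for every $\lambda > 0$. Since $1/|x-x_0|$ is non-integrable at $x_0$ in one dimension, this contradicts the finite entropy, and hence $h(\cdot,T) > 0$ throughout $\omega$.

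The main point to get right is the Fatou step, in particular the pointwise convergence $G_{\eps_k}(h_{\eps_k}(x,T)) \to G_0(h(x,T))$ at points where $h(x,T) = 0$; the explicit formula for $G_\eps$ handles both the strictly positive and the vanishing cases cleanly, since the correction $\eps/(6z^2)$ is itself nonnegative. A secondary bookkeeping issue is that Lemma~\ref{MainAE} requires a uniform initial entropy $\int G_\eps(h_{0,\eps})\,dx \le C$, which is guaranteed by the standing assumption $\int_\Omega 1/h_0\,dx < \infty$ of Theorem~\ref{C:Th1} together with the lifted initial data $h_{0,\eps} = h_{0,\delta} + \eps^\theta$. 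I note that the local hypothesis $h_0 > 0$ on $\omega$ is not strictly needed for either conclusion — the argument actually yields positivity on all of $\Omega$ — so the proposition reads as a convenient restriction rather than a refinement of what the entropy method produces.
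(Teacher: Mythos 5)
Your proof is correct, and it reaches the conclusion by a mildly different route than the paper. The paper first establishes a \emph{localized} entropy bound (Lemma~\ref{F:local_BF}: $\int_\Omega \xi\,/h(\cdot,T)<\infty$ with $\xi=\zeta^4$ supported on $\overline\omega$) and then runs the two contradiction arguments against that weighted integral; you instead propagate the \emph{global} bound (\ref{BF_entropy}) to the weak solution via Fatou (using $G_\eps(z)=1/(2z)+\eps/(6z^2)\to G_0(z)=1/(2z)$ pointwise, with the convention $G_0(0)=+\infty$, which handles the vanishing set correctly), obtaining $\int_\Omega 1/h(\cdot,T)<\infty$ for every $T\in[0,T_{loc}]$ and then arguing exactly as the paper does. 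Both then feed into the same dichotomy: part (1) is the positive-measure-zero-set contradiction, and part (2) uses the one-dimensional Sobolev embedding $H^2\hookrightarrow C^{1,1/2}$ to get regularity of $h(\cdot,T)$ for a.e.\ $T$ and then the non-integrability of $1/|x-x_0|^\alpha$, $\alpha\ge 1$, near an interior zero. Your version makes the vanishing-derivative step explicit (an interior minimum of a nonnegative $C^1$ function has $h_x(x_0,T)=0$, hence $h(x,T)=o(|x-x_0|)$), while the paper implicitly uses the same fact to pass from $C^{3/2}$ to $h\lesssim|x-x_0|^{3/2}$; both give a divergent entropy integral. Your closing observation is also accurate: as stated, the proposition inherits the global finite-entropy hypothesis $\int 1/h_0<\infty$ from Theorem~\ref{C:Th1}, so the global Fatou argument already yields positivity on all of $\Omega$ and the restriction to $\omega$ adds nothing. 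The paper's localized Lemma~\ref{F:local_BF} earns its keep only if one imagines relaxing the global entropy hypothesis to something local on $\omega$ — at which point the existence theorem itself would need reworking — so for the proposition as written your simpler global argument is adequate and a bit cleaner.
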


The proof of Proposition~\ref{F:LemPos} depends on a local version
of the a priori bound (\ref{BF_entropy}) of Lemma \ref{MainAE}:

\begin{lemma} \label{F:local_BF}  Let $\omega \subseteq \Omega$ be an open interval
and
$\zeta \in C^2(\bar{\Omega})$ such that
$\zeta > 0$ on $\omega$,
$\text{supp}\,\zeta = \overline{\omega}$,
and $(\zeta^4)^{\prime} = 0$ on
$\partial\Omega$. If $\omega = \Omega$, choose $\zeta$ such
that $ \zeta(-a) = \zeta(a)
> 0$. Let $\xi := \zeta^4$.

If the initial data $h_0$ and the
time $T_{loc}$ are as in Theorem \ref{C:Th1}
then for all $T \in [0,T_{loc}]$ the weak solution $h$
from Theorem \ref{C:Th1}
satisfies
\begin{equation} \label{local_BF_finite}
\int\limits_\Omega \xi(x) \; \frac{1}{h(x,T)} \; dx < \infty
\end{equation}

\end{lemma}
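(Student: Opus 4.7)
The plan is to establish a localized version of the Bernis--Friedman entropy bound (\ref{BF_entropy}) at the level of the approximate solutions $h_\eps$ constructed in Lemma~\ref{preC:Th1}, and then pass to the limit $\eps\to 0$. Concretely, I would multiply the regularized equation (\ref{D:1r'}) (with $\delta=0$) by the test function $\xi(x)\,G_\eps'(h_\eps(x,t))$, integrate over $\Omega$, and derive a uniform-in-$\eps$ bound on $\int_\Omega \xi(x)\,G_\eps(h_\eps(x,T))\,dx$. The initial value is controlled by the hypothesis (\ref{C:inval}): using the explicit form $G_\eps(z)=\tfrac{1}{2z}+\tfrac{\eps}{6z^2}+(\text{lower order})$, chosen nonnegative, together with the construction (\ref{D:inreg}) of $h_{0,\eps}$, one verifies that $\int_\Omega \xi\,G_\eps(h_{0,\eps})\,dx \leq C\int_\Omega h_0^{-1}\,dx + C < \infty$ uniformly in $\eps$. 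All boundary contributions from the integrations by parts below vanish: either $\supp\zeta \subset\subset \Omega$, or $\supp\zeta=\overline\Omega$, in which case the assumption $\zeta(-a)=\zeta(a)>0$ combined with $(\zeta^4)'(\pm a)=0$ forces $\zeta'(\pm a)=0$, and together with the periodicity of $h_\eps$ this kills every boundary term.

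Using the entropy identity $G_\eps''(z)f_\eps(z)=1$ to cancel the principal singularity yields
$$
\tfrac{d}{dt}\int_\Omega \xi\,G_\eps(h_\eps)\,dx + a_0\!\int_\Omega \xi\,h_{\eps,xx}^2\,dx \;=\; \mathcal{R}_\eps(t),
$$
where $\mathcal{R}_\eps$ collects three families of remainders: the bulk quadratic pieces $-a_0\!\int\xi' h_{\eps,x}h_{\eps,xx} + a_1\!\int\xi h_{\eps,x}^2 + a_2\!\int\xi h_{\eps,x}w'$; the drift term $-a_3\!\int\xi G_\eps'(h_\eps)h_{\eps,x}$, which upon integration by parts becomes $a_3\!\int\xi'G_\eps(h_\eps)\,dx$ and is bounded by $|a_3|\|\xi'\|_\infty K_2$ via (\ref{BF_entropy}); and the commutator term $\int \xi'\,G_\eps'(h_\eps)f_\eps(h_\eps)(a_0 h_{\eps,xxx}+a_1 h_{\eps,x}+a_2 w')\,dx$. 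For the commutator, I would integrate the $h_{\eps,xxx}$ contribution by parts once more in $x$, using $[\xi'G_\eps'(h_\eps)f_\eps(h_\eps)]_x = \xi''G_\eps'(h_\eps)f_\eps(h_\eps) + \xi'\bigl(1+G_\eps'(h_\eps)f_\eps'(h_\eps)\bigr)h_{\eps,x}$. The key algebraic fact, checked by direct computation with $f_\eps(z)=z^4/(z+\eps)$ and $G_\eps'(z)=-\tfrac{1}{2z^2}-\tfrac{\eps}{3z^3}+c$, is that both $G_\eps'(z)f_\eps(z)$ and $G_\eps'(z)f_\eps'(z)$ are bounded uniformly in $\eps\in(0,\eps_0)$ for $z$ in the $L^\infty$ range of $h_\eps$ supplied by Lemma~\ref{MainAE}. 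The choice $\xi=\zeta^4$ is made precisely so that $(\xi')^2/\xi$ and $(\xi'')^2/\xi$ are bounded by $C\|\zeta\|_{C^2}^2$; Young's inequality then absorbs every product of the form $|\xi^{(k)}|\,|h_{\eps,xx}|$ into $\tfrac{a_0}{2}\!\int \xi h_{\eps,xx}^2$ at the cost of terms controlled by $C(1+\|h_{\eps,x}\|_{L^2}^2)$.

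Assembling these estimates gives a differential inequality of the form
$$
\tfrac{d}{dt}\!\int_\Omega \xi\,G_\eps(h_\eps)\,dx \leq C_1 + C_2\bigl(1+\|h_{\eps,x}(\cdot,t)\|_{L^2}^2\bigr),
$$
with $C_1,C_2$ independent of $\eps$. Integrating in time and using the uniform bound $\int_0^{T_{loc}}\|h_{\eps,x}\|_{L^2}^2\,dt \leq C$ from Lemma~\ref{MainAE} yields $\sup_{T\in[0,T_{loc}]}\int_\Omega \xi\,G_\eps(h_\eps(\cdot,T))\,dx \leq C(T_{loc})$ uniformly in $\eps$. Along the subsequence $\eps_k\to 0$ constructed in the proof of Theorem~\ref{C:Th1}, uniform convergence $h_{\eps_k}\to h$ combined with Fatou's lemma yields $\int_\Omega \xi(x)\,G_0(h(x,T))\,dx<\infty$, and the lower bound $G_0(z)\geq \tfrac{1}{2z}-C$ delivers (\ref{local_BF_finite}). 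The main obstacle is the commutator term supported on $\{\xi'\neq 0\}$, where $h_\eps$ may be arbitrarily small: the argument succeeds only because $G_\eps'f_\eps$ and $G_\eps'f_\eps'$ remain bounded as $z\to 0^+$ uniformly in $\eps$, which is the algebraic mechanism by which the Bernis--Friedman entropy method propagates to the cutoff $\xi=\zeta^4$ without requiring a local positivity estimate on $\supp\xi'$.
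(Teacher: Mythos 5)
Your proposal follows the paper's proof essentially step for step: the test function $\xi\,G_\eps'(h_\eps)$, the same decomposition into the drift term $-a_3\int\xi G_\eps'(h_\eps)h_{\eps,x}$, the commutator $\int\xi' f_\eps(h_\eps)G_\eps'(h_\eps)(a_0 h_{\eps,xxx}+\cdots)$, and the bulk quadratic pieces, the same key algebraic bounds on $f_\eps G_\eps'$ and $f_\eps' G_\eps'$ (uniform in $\eps$ and nonsingular at $z=0$), absorption of the dangerous cross terms into $\tfrac{a_0}{2}\int\xi h_{\eps,xx}^2$ via Cauchy--Schwarz and Young using the structure $\xi=\zeta^4$, control of the remainders by the a priori estimates of Lemma~\ref{MainAE} and conservation of mass, and passage to the limit $\eps\to0$ by Fatou's lemma. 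The only cosmetic differences are that you write the Gr\"onwall step in differential rather than time-integrated form, and you do not single out the antiderivative $F_\eps(z)=\int_0^z f_\eps G_\eps'$ (which the paper uses for the $a_1$-part of the commutator), but the underlying mechanism — boundedness of $f_\eps G_\eps'$, $f_\eps' G_\eps'$, and hence $F_\eps$ near $z=0$ uniformly in small $\eps$ — is correctly identified.
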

The proof of Lemma \ref{F:local_BF} is given in Appendix \ref{A_priori_proofs}.
The proof of Proposition \ref{F:LemPos} is essentially a combination of the proofs
of Corollary 4.5 and Theorem 6.1 in \cite{B8} and is provided here for the reader's
convenience.
\begin{proof}[Proof of Proposition~\ref{F:LemPos}]
Choose
the localizing function
$\zeta(x)$ to satisfy the hypotheses of
Lemma \ref{F:local_BF}.  Hence, (\ref{local_BF_finite}) holds
for every $T \in [0,T_{loc}]$.

First, we prove
$h(x,T) > 0$ for almost every $x \in \omega$, for all $T \in [0,T_{loc}]$.
Assume not.  Then there is a time $T \in [0,T_{loc}]$ such that the set
$\{ x \; | \; h(x,T) = 0 \} \cap \omega$ has positive measure.  Then
$$
\infty > \int\limits_\Omega \xi(x) \tfrac{1}{h(x,T)} \; dx
\geq
\int\limits_{ \{h(\cdot,T) = 0 \} \cap \omega } \xi(x) \tfrac{1}{h(x,T)} \; dx
= \infty.
$$
This contradiction implies
there can be no time at which $h$ vanishes on a set of positive measure
in $\omega$, as desired.

Now, we prove
$h(x,T) > 0$ for all $x \in \omega$,  for almost every $T \in [0,T_{loc}]$.
By (\ref{Linf_H2}), $h_{xx}(\cdot,T) \in L^2(\Omega)$ for almost all $T \in [0,T_{loc}]$
hence $h(\cdot,T) \in C^{3/2}(\Omega)$ for almost all $T \in [0,T_{loc}]$.
Assume $T_0$ is such that
$h(\cdot,T_0) \in C^{3/2}(\Omega)$ and $h(x_0,T_0) = 0$ at some $x_0 \in \omega$.
Then there is a $L$ such that
$$
h(x,T_0) = |h(x,T_0)-h(x_0,T_0)| \leq L | x-x_0|^{3/2}.
$$
Hence
$$
\infty > \int\limits_\Omega \xi(x) \tfrac{1}{h(x,T_0)} \; dx
\geq
\tfrac{1}{L} \int\limits_\Omega \xi(x) |x - x_0|^{-3/2} \; dx = \infty.
$$
This contradiction implies there
can be no point $x_0$ such that $h(x_0,T_0) = 0$, as desired.  Note that
we used $\xi > 0$ on $\omega$ and $x_0 \in \omega$ to conclude that the
integral diverges.

\end{proof}

\appendix

\section{Proofs of A Priori Estimates}
\label{A_priori_proofs}

The first observation is that the periodic boundary conditions imply
that classical solutions of equation (\ref{D:1r'}) conserve mass:
\begin{equation}\label{D:mass}
\int\limits_{\Omega} {h_{\delta \eps}(x,t)\,dx} =  \int\limits_{\Omega}
{h_{0,\delta \varepsilon}(x) \,dx} = M_{\delta \varepsilon } < \infty \text{ for all } t
> 0.
\end{equation}
Further, (\ref{D:inreg}) implies $M_{\delta \eps} \to M = \int h_0$ as $\eps, \delta \to 0$.
The initial data in this article have $M > 0$, hence $M_{\delta \eps} > 0$
for $\delta$ and $\eps$ sufficiently small.

Also, we will relate the $L^p$ norm of $h$ to the $L^p$ norm of its
zero-mean part as follows:
$$
|h(x)| \leq \left| h(x)-\tfrac{M}{|\Omega|} \right| +
\tfrac{M}{|\Omega|} \Longrightarrow \| h \|_p^p \leq 2^{p-1} \; \| v
\|_p^p + \left(\tfrac{2}{  |\Omega| } \right)^{p-1} \; M^{p}
$$
where $v := h-M/|\Omega|$ and we have assumed that $M \geq 0$. We
will use the Poincar\'{e} inequality which holds for  any zero-mean
function in $H^1(\Omega)$
\begin{equation} \label{Poincare}
\| v \|_p^p \leq b_1 \|v_x \|_p^p \qquad 1 \leq p < \infty
\end{equation}
with $b_1 = |\Omega|^p/(p \: 2^{p-1})$.

Also used will be an interpolation inequality \cite[Th. 2.2, p. 62]{Lady} for functions of zero mean in $H^1(\Omega)$:
\begin{equation} \label{Lady_ineq}
\| v \|_p^p \leq b_2 \, \| v_x \|_2^{ap} \; \| v \|_r^{(1-a)p}
\end{equation}
where $r \geq 1$, $p \geq r$,
$$
a = \tfrac{1/r - 1/p}{1/r + 1/2}, \qquad
b_2 = \left(1 + r/2 \right)^{a p}.
$$
It follows that for any zero-mean function $v$ in $H^1(\Omega)$
\begin{equation} \label{LpH1}
\| v \|_p^p  \leq b_3 \| v_x \|_2^p,
\quad
\Longrightarrow
\quad
\| h \|_p^p \leq b_4 \| h_x \|_2^p + b_5 M_{\delta \eps}^p
\end{equation}
where
$$
b_3 =
\begin{cases}
b_1 \; | \Omega |^{(2-p)/p} & \mbox{if} \quad 1 \leq p \leq 2 \\
b_1^{(p+2)/2} \; b_2 & \mbox{if} \quad  2 < p < \infty
\end{cases}, \quad
b_4 = 2^{p-1} \, b_3, \quad b_5 = \left( \tfrac{2}{|\Omega|} \right)^{p-1}
$$
To see that (\ref{LpH1}) holds, consider two cases.  If $1 \leq p < 2$, then by (\ref{Poincare}), $\| v \|_p$ is controlled by $\| v_x \|_p$.  By the H\"older inequality, $\| v_x \|_p$ is then controlled by $\| v_x \|_2$.  If $p > 2$ then by (\ref{Lady_ineq}), $\|v \|_p$ is controlled by $\|v_x\|_2^a \|v \|_2^{1-a}$ where $a = 1/2 - 1/p$.  By  the Poincar\'e inequality, $\| v \|_2^{1-a}$ is controlled by $\| v_x \|_2^{1-a}$.

The Cauchy inequality $ab \leqslant \epsilon a^2 + b^2/(4 \epsilon)$
with $\epsilon > 0$ will be used often as will Young's inequality
\begin{equation} \label{Young}
a b \leq
\epsilon a^p + \tfrac{b^q}{q \, (\epsilon p)^{q/p}}, \qquad \tfrac{1}{p} + \tfrac{1}{q} = 1, \; \epsilon > 0.
\end{equation}

\begin{proof}[Proof of Lemma~\ref{MainAE}]
In the following, we denote the classical solution $h_{\delta \eps}$ by $h$ whenever
there is no chance of confusion.

To prove the bound (\ref{D:a13''}) one starts by
multiplying (\ref{D:1r'}) by $- h_{xx}$, integrating over
$Q_T$, and using the periodic boundary conditions (\ref{D:2r'}) yields
\begin{align}
\label{D:a1}
& \tfrac{1}{2} \int\limits_{\Omega} {h_x^2(x,T) \,dx} + a_0
\iint\limits_{Q_T} {f_{\delta \varepsilon}(h) h^2_{xxx} \,dx dt} \\
\notag & \hspace{.2in} = \tfrac{1}{2}\int\limits_{\Omega}
{{h_{0,\delta \varepsilon,x}}^2(x) \,dx} - a_1 \iint\limits_{Q_T}
{f_{\varepsilon}(h) h_x h_{xxx}\,dx dt}+
\delta a_1 \iint\limits_{Q_T}h_{xx}^2 \; dx dt \\
\notag & \hspace{.4in} - a_2 \iint\limits_{Q_T} {f_{\delta
\varepsilon}(h)w'  h_{xxx}\,dx dt} - \delta a_2 \iint\limits_{Q_T}
w^{\prime} \, h_{xxx} \; dx dt.
\end{align}
The periodic boundary conditions were used above to conclude
$$
a_3 \iint\limits_{Q_T} h_x h_{xx}\; dx dt = \tfrac{a_3}{2}
\iint\limits_{Q_T} {(h^2_{x})_{x} \,dx dt} = \tfrac{a_3}{2}
\int\limits_{0}^T h^2_{x}(a,t) - h^2_x(-a,t) \, dt = 0.
$$
The Cauchy inequality is used to bound some terms on
the right-hand of (\ref{D:a1}):

\begin{equation}\label{D:a2}
a_1\iint\limits_{Q_T} {f_{\delta  \varepsilon}(h) h_x h_{xxx}\,dx
dt} \leqslant \tfrac{a_0}{4}\iint\limits_{Q_T} {f_{\delta
\varepsilon}(h) h^2_{xxx} \,dx dt} +
\tfrac{a_1^2}{a_0}\iint\limits_{Q_T} {f_{\delta  \varepsilon}(h)
h^2_{x} \,dx dt},
\end{equation}
\begin{equation}\label{D:a3}
a_2\iint\limits_{Q_T} {f_{\delta  \varepsilon}(h)w' h_{xxx}\,dx dt}
\leqslant \tfrac{a_0}{4}\iint\limits_{Q_T} {f_{\delta
\varepsilon}(h) h^2_{xxx} \,dx dt} + \tfrac{a_2^2}{a_0}
\iint\limits_{Q_T} {f_{\delta  \varepsilon}(h) w'^2 \,dx dt},
\end{equation}
\begin{equation} \label{D:a4}
\delta a_2 \iint\limits_{Q_T} w' h_{xxx} \; dx dt\leq \delta
\tfrac{a_0}{2}  \iint\limits_{Q_T} h_{xxx}^2 \; dx dt + \delta
\tfrac{a_2^2}{a_0} \; T  \int\limits_\Omega  w'^2 \; dx.
\end{equation}
Using (\ref{D:a2})--(\ref{D:a4}) in
(\ref{D:a1}) yields
\begin{align}
\notag
& \tfrac{1}{2}\int\limits_{\Omega} {h_x^2(x,T) \,dx} + \tfrac{a_0}{2}
\iint\limits_{Q_T} {f_{\delta \varepsilon}(h) h^2_{xxx}\,dx dt} \\
& \hspace{.2in} \notag
 \leqslant
 \tfrac{1}{2}\int\limits_{\Omega} {
 {h_{0,\delta \varepsilon,x}}^2\,dx} + \tfrac{a_1^2}{a_0}
\iint\limits_{Q_T} {f_{\delta  \varepsilon}(h) h^2_{x} \,dx dt} +
\tfrac{a_2^2}{a_0} \iint\limits_{Q_T} {f_{\delta  \varepsilon}(h)
w'^2
\,dx dt} \\
& \hspace{.4in}  \label{D:a4a}
+ \delta \; a_1 \iint\limits_{Q_T} h_{xx}^2 \; dx dt
+ \delta \; \tfrac{a_2^2}{a_0} \; T \int\limits_\Omega w'^2 \; dx \\
& \hspace{.2in} %
\notag \leqslant \tfrac{1}{2} \int\limits_{\Omega} { {h_{0,\delta
\varepsilon,x}}^2\,dx} +  \tfrac{a_1^2}{a_0} \iint\limits_{Q_T}
{|h|^3 h_{x}^2  \,dx dt} + \tfrac{a_2^2}{a_0} \| w' \|_\infty^2 \;
\iint\limits_{Q_T} {|h|^3 \,dx dt} \\
& \label{D:a5} \hspace{.4in}
+ \delta \; a_1 \iint\limits_{Q_T} h_{xx}^2 \; dx dt
+ \delta \; \tfrac{a_2^2}{a_0} \; T \int\limits_\Omega w'^2 \; dx
\end{align}
Above, we used the bound $f_{\delta \eps}(z) \leq |z|^3$.
By the Cauchy inequality, bound (\ref{LpH1}), and bound
(\ref{Lady_ineq}),
\begin{align}
&\notag  \iint\limits_{Q_T} |h|^3 h_x^2 \; dx dt \leq \tfrac{1}{2} \iint\limits_{Q_T} h^6 \; dx dt +
\tfrac{1}{2}  \iint\limits_{Q_T} h_x^4 \; dx dt \\
& \notag
\leq
\tfrac{b_4}{2} \int\limits_0^T  \left( \int\limits_\Omega h_x^2 \; dx \right)^3 \; dt
+ \tfrac{b_5}{2} \; M_{\delta \eps}^6 \; T +
\tfrac{b_2}{2} \int\limits_0^T \| h_{xx}(\cdot,t) \|_2 \; \| h_x(\cdot,t) \|_2^3 \; dt \\
& \label{D:a9} \hspace{.4in} \leq \tfrac{1}{2} \iint\limits_{Q_T}
h_{xx}^2 \; dx dt + c_1 \int\limits_0^T \left( \int\limits_\Omega
h_x^2 \; dx \right)^3 \; dt + c_2 \; T
\end{align}
where $c_1 = b_2^2/8 + b_4/2$ and $c_2 = M_{\delta \eps}^6 \; b_5/2$.

By (\ref{LpH1}),
\begin{equation}\label{D:a10}
\iint\limits_{Q_T} {|h|^3 \,dx dt} \leqslant b_4 \int\limits_0^T
\left( \int\limits_\Omega h_x^2 \; dx \right)^{3/2} \; dt + b_5 \;
M_{\delta \eps}^3 \; T.
\end{equation}

From (\ref{D:a5}), due to (\ref{D:a9})--(\ref{D:a10}), we arrive at
\begin{align}
& \notag
\tfrac{1}{2} \int\limits_{\Omega} {h_x^2(x,T) \,dx} + \tfrac{a_0}{2}
\iint\limits_{Q_T} {f_{\delta \varepsilon}(h) h^2_{xxx}\,dx dt} \\
& \notag \hspace{.2in} \leqslant \tfrac{1}{2}\int\limits_{\Omega}
{{h_{0,\delta \varepsilon,x}}^2 \,dx} + c_3 \iint\limits_{Q_T}
{h^2_{xx}\,dx dt}
+ c_4 \int\limits_0^T \left( \int\limits_\Omega h_x^2 \; dx \right)^3 \; dt \\
& \notag \hspace{.6in}
+ c_5  \int\limits_0^T \left( \int\limits_\Omega h_x^2 \; dx \right)^{3/2} \; dt
+ c_6 \; T \\
& \hspace{.2in} \leq \label{D:a11} \tfrac{1}{2}\int\limits_{\Omega}
{{h_{0,\delta \varepsilon,x}}^2 \,dx} + c_3 \iint\limits_{Q_T}
{h^2_{xx}\,dx dt} + c_7 \int\limits_0^T \max\left\{ 1 , \left(
\int\limits_\Omega h_x^2 \; dx \right)^3 \right\} \; dt
\end{align}
where
\begin{align*}
& c_3 = \tfrac{a_1^2}{2 a_0} + \delta |a_1|, \quad
c_4 = \tfrac{a_1^2}{a_0} c_1, \quad
c_5 = \tfrac{a_2^2}{a_0} \| w' \|_\infty^2 b_4 \\
& c_6 = \tfrac{a_1^2}{a_0}c_2 + \tfrac{a_2^2}{a_0} \| w' \|_\infty^2
b_5 M_{\delta \eps}^3 + \delta \tfrac{a_2^2}{a_0} \| w' \|_2^2,
\quad c_7 = c_4 + c_5 + c_6.
\end{align*}

Now, multiplying (\ref{D:1r'}) by $G'_{\delta \varepsilon}(h)$,
integrating over $Q_T$, and using the periodic boundary conditions
(\ref{D:2r'}), we obtain
\begin{multline}\label{D:aa1}
\int\limits_{\Omega} {G_{\delta \varepsilon}(h(x,T))\,dx} + a_0
\iint\limits_{Q_T} {h^2_{xx}\,dx dt} = \int\limits_{\Omega}
{G_{\delta \varepsilon}(h_{0, \delta \varepsilon})\,dx} +
a_1 \iint\limits_{Q_T} {h^2_{x}\,dx dt} \\
-a_3 \iint\limits_{Q_T} {(G_{\delta \varepsilon}(h))_{x} \,dx dt} + a_2
\iint\limits_{Q_T} {w' h_{x}\,dx dt}.
\end{multline}
By the periodic boundary conditions,
$$
\iint\limits_{Q_T} {( G_{\delta \varepsilon}(h))_{x} \,dx dt} =
\int\limits_{0}^T {(G_{\delta \varepsilon}(h(a,t)) -
G_{\delta \varepsilon}(h(-a,t)))\, dt} = 0.
$$
Thus, from (\ref{D:aa1}), we deduce
\begin{align}\notag
& \int\limits_{\Omega} {G_{\delta \varepsilon}(h(x,T))\,dx} + a_0
\iint\limits_{Q_T} {h^2_{xx}\,dx dt} \\
& \notag \hspace{.3in} \leqslant
\int\limits_{\Omega} {G_{\delta \varepsilon}(h_{0, \delta \varepsilon})\,dx} +
a_1 \iint\limits_{Q_T} {h^2_{x}\,dx dt} +
|a_2|
\| w' \|_2
\int\limits_{0}^T { \biggl(
\int\limits_{\Omega} {h^2_{x} \,dx} \biggr)^{\tfrac{1}{2}} dt} \\
& \label{D:aa2} \hspace{.3in}
\leqslant \int\limits_{\Omega}
{G_{\delta \varepsilon}(h_{0, \delta \varepsilon})\,dx} + c_8 \int\limits_0^T
\max\left\{ 1, \int\limits_\Omega h_x^2(x,t) \; dx \right\} \; dt,
\end{align}
where
$$
c_8 = |a_1| + |a_2| \| w' \|_2.
$$

Further, from (\ref{D:a11}) and (\ref{D:aa2}) we find
\begin{align} \notag
& \int\limits_{\Omega} {h_x^2 \,dx}
+ \tfrac{2 c_3}{a_0} \int\limits_\Omega G_{\delta \eps}(h(x,T)) \; dx
+ a_0 \iint\limits_{Q_T}
{f_{\delta \varepsilon}(h) h^2_{xxx}  \,dx dt} \\
& \hspace{.1in} \notag
\leq
\int\limits_\Omega {h_{0,\delta \eps,x}}^2 \; dx + \tfrac{2 c_3}{a_0} \left( \int\limits_\Omega G_{\delta \eps}(h(x,T)) \; dx + a_0 \iint\limits_{Q_T} h_{xx}^2 \; dx dt \right) \\
& \hspace{.1in} \notag
+ 2 c_7 \int\limits_0^T \max\left\{ 1, \left( \int\limits_\Omega h_x^2(x,t) \; dx \right)^3 \right\} \; dt
\leq \int\limits_\Omega {h_{0,\delta \eps,x}}^2 \; dx \\
& \hspace{.1in} \notag + \tfrac{2 c_3}{a_0} \left(
\int\limits_\Omega G_{\delta \eps}(h_{0,\delta \eps}) \; dx + c_8 \int\limits_0^T \max\left\{
1, \int\limits_\Omega h_x^2(x,t) \; dx
\right\} \; dt \right) \\
& \hspace{.1in} \label{D:aa3}+
 2 c_7 \int\limits_0^T \max\left\{ 1, \left( \int\limits_\Omega h_x^2(x,t) \; dx \right)^3 \right\} \; dt
\leq
 \int\limits_\Omega {h_{0,\delta \eps,x}}^2 \; dx \\
  & \notag  \hspace{.1in} + \tfrac{2 c_3}{a_0}
\int\limits_\Omega G_{\delta \eps}(h_{0,\delta \eps}) \; dx
+ c_9 \int\limits_0^T \max\left\{ 1, \left( \int\limits_\Omega h_x^2(x,t) \; dx \right)^3 \right\} \; dt
\end{align}
where $c_9 = 2 c_3 c_8/a_0 + 2 c_7$.

Applying the nonlinear Gr\"onwall lemma \cite{Bihari} to
$$
v(T) \leq v(0) + c_9 \int\limits_0^T \max\{1,v^3(t)\} \; dt$$ with
$v(t) = \int h_x^2(x,t) + 2 c_3/a_0 \: G_{\delta \eps}(h(x,t)) \;
dx$ yields
$$
v(t) \leq
\begin{cases}
\begin{cases}
v(0) + c_9 t & \mbox{if} \quad t < t_0 := \tfrac{1-v(0)}{c_9} \\
\left( 1 - 2 c_9 (t-t_0) \right)^{-1/2} & \mbox{if} \quad t \geq t_0
\end{cases}
& \mbox{if} \quad v(0) < 1 \\
\left( v(0)^{-2} - 2 c_9 t \right)^{-1/2} & \mbox{if} \quad v(0)
\geq 1.
\end{cases}
$$
From this,
\begin{align} \label{H1_control}
& \int\limits_\Omega h_x^2(x,t) + 2 \tfrac{c_3}{a_0} G_{\delta \eps}(h(x,t)) \; dx \\
& \hspace{.4in} \leq \sqrt{2} \max\left\{ 1, \int\limits_\Omega
({h_{0,\delta \eps,x}}^2(x) + 2 \tfrac{c_3}{a_0} G_{\delta
\eps}(h_{0,\delta \eps}(x))) \; dx \right\} = K_{\delta \eps} <
\infty \notag
\end{align}
for all $t \in [0,T_{\delta \eps,loc}]$ where
\begin{equation} \label{Tloc_eps_is}
T_{\delta \eps,loc} := \tfrac{1}{4 c_9} \min\left\{ 1,   \left(
\int\limits_\Omega ({h_{0,\delta \eps,x}}^2(x) + 2 \tfrac{c_3}{a_0}
G_{\delta \eps}(h_{0,\delta \eps}(x)))\,dx \right)^{-2} \right\}.
\end{equation}
Using the $\delta \to 0, \eps \to 0$ convergence of the initial data
and the choice of $\theta \in (0,2/5)$
(see (\ref{D:inreg})) as well as the assumption that the initial data $h_0$ has
finite entropy (\ref{C:inval}),
the times $T_{\delta \eps, loc}$ converge to a positive limit and
the upper bound $K$ in
(\ref{H1_control}) can be taken finite and independent of $\delta$ and $\epsilon$
for $\delta$ and $\eps$ sufficiently small.  (We refer the reader to the end
of the proof of Lemma \ref{F:local_BF} in this Appendix for a fuller explanation
of a similar case.)
Therefore there
exists $\delta_0>0$ and $\eps_0>0$ and $K$ such that
the bound (\ref{H1_control}) holds for all $0 \leq \delta < \delta_0$ and
$0 < \eps < \eps_0$
with $K$ replacing $K_{\delta \eps}$ and for all
\begin{equation} \label{Tloc_is}
0 \leq t \leq T_{loc} := \tfrac{9}{10} \lim_{\eps \to 0, \delta \to 0} T_{\delta \eps,loc}.
\end{equation}

Using the uniform bound on $\int h_x^2$ that (\ref{H1_control})
provides, one can find a uniform-in-$\delta$-and-$\eps$ bound for the
right-hand-side of (\ref{D:aa3}) yielding the desired a priori bound
(\ref{D:a13''}).  Similarly, one can find a
uniform-in-$\delta$-and-$\eps$ bound for the right-hand-side of
(\ref{D:aa2}) yielding the desired a priori bound (\ref{BF_entropy}).

To prove the bound (\ref{D:d2}), multiply (\ref{D:1r'}) by $- a_0
h_{xx} - a_1 h - a_2 w$, integrate over $Q_T$, integrate by parts,
use the periodic boundary conditions  (\ref{D:2r'}), and use the
mass conservation (see (\ref{D:mass})) to find
\begin{align}
& \notag \mathcal{E}_{\delta \varepsilon}(T) + \iint\limits_{Q_T}
{f_{\delta \varepsilon}(h) (a_0 h_{xxx} + a_1 h_x + a_2 w'(x))^2
\,dx dt} \\
& \hspace{.4in} \notag
= \mathcal{E}_{\varepsilon}(0) + a_2a_3 \iint\limits_{Q_T} {h_{x}
w(x) \,dx dt} \\
& \hspace{.4in}  \notag
= \mathcal{E}_{\varepsilon}(0) - a_2a_3
\iint\limits_{Q_T} {h w'(x) \,dx dt} \\
& \hspace{.4in}  \label{D:d0} \leq  \mathcal{E}_{\varepsilon}(0) + |
a_2 a_3| \| w' \|_\infty \iint\limits_{Q_T} |h(x,t)| \; dx dt.
\end{align}
By the embedding theorem (\ref{LpH1}) and the bound (\ref{D:a13''}), one has
\begin{align}
& \notag
\iint\limits_{Q_T} |h(x,t)| \; dx dt
\leq |\Omega|^2 \int\limits_0^T \sqrt{ \int\limits_\Omega |h_x(x,t)|^2 \; dx } \; dt
+  M_{\delta \eps} T \\
& \hspace{.5in} \label{L1_bound}
\leq \left( |\Omega|^2 \sqrt{K_1} +  2 M \right) T,
\end{align}
where $\delta_0$ and $\eps_0$ have been taken smaller, if necessary,
to ensure that $M_{\delta \eps} \leq 2 M$.  Substituting
(\ref{L1_bound}) into (\ref{D:d0}) yields the desired bound
(\ref{D:d2}) with the constant $$K_3 =  | a_2 a_3| \| w' \|_\infty (
|\Omega|^2 \sqrt{K_1} +  2 M ).$$

The parameters $\delta_0$ and $\eps_0$ are determined by $a_0$, $a_1$,
$a_2$, $\| w' \|_2$, $\| w' \|_\infty$, $| \Omega |$, $\int h_0$, $\|
h_{0x} \|_2$, $\int 1/h_0$, by how quickly $M_{\delta \eps}$ converges to $M$,
and by how quickly the approximate initial
data  (\ref{D:inreg}), $h_{0,\delta \eps}$, converge to $h_0$ in $H^1(\Omega)$.

The time $T_{loc}$ and the constants $K_1$, $K_2$, and $K_3$ are determined by
$\delta_0$, $\eps_0$, $a_0$,
$a_1$, $a_2$, $\| w' \|_2$, $\| w' \|_\infty$, $| \Omega |$,
$\int h_0$, $\| h_{0x} \|_2$, and $\int 1/h_0$.
\end{proof}

\begin{proof}[Proof of Lemma~\ref{lemLL}]
In the following, we denote the positive, classical solution $h_{\eps}$ by $h$ whenever
there is no chance of confusion.

Taking $\delta \to 0$ in (\ref{D:a4a}) yields
\begin{align} 
& \tfrac{1}{2}\int\limits_{\Omega} {h_x^2 \,dx} + \tfrac{a_0}{2}
\iint\limits_{Q_T} {f_{\varepsilon}(h) h^2_{xxx}\,dx dt} \\
& \notag
\leqslant \tfrac{1}{2}\int\limits_{\Omega} {h_{0\varepsilon, x}^2\,dx} + \tfrac{a_1^2}{a_0}
\iint\limits_{Q_T} {f_{\varepsilon}(h) h^2_{x} \,dx dt} +
\tfrac{a_2^2}{a_0} \iint\limits_{Q_T} {f_{\varepsilon}(h) w'^2(x)
\,dx dt} \\
&  \notag
\leqslant \tfrac{1}{2} \int\limits_{\Omega} { h_{0\varepsilon,
x}^2\,dx} + \int\limits_0^T \left(\| h(\cdot,t) \|_\infty^3
\int\limits_\Omega
\tfrac{a_1^2}{a_0} h_x^2(x,t) + \tfrac{a_2^2}{a_0} w'^2 \; dx \; \right)dt \\
&  \label{D:a5-n}
\leqslant \tfrac{1}{2} \int\limits_{\Omega} { h_{0\varepsilon,
x}^2\,dx} + \tfrac{a_1^2 + a_2^2}{a_0} \int\limits_0^T \| h(\cdot,t)
\|_\infty^3 \; \max\left\{ \|w'\|_2^2, \int\limits_\Omega h_x^2(x,t)
\; dx \right\}\,dt
\end{align}
Applying the nonlinear Gr\"onwall lemma \cite{Bihari} to
$$
v(T) \leq v(0) + \int\limits_0^T B(t) \; \max\{ A, v(t)\; dt
$$
with $v(t) = \int h_x^2(x,t) \; dx$, $B(t) =
\int\limits_{\Omega}\,(2(a_1^2 + a_2^2)/a_0 \; \| h(\cdot,t)
\|_\infty^3)\;dx $ and $A = \| w' \|_2^2$ yields
$$
v(T) \leq \max\{ A, v(0) \} \; e^{\int\limits_0^T B(t) \; dt}.
$$
\end{proof}

\begin{proof}[Proof of Lemma~\ref{MainAE2}]
In the following, we denote the positive, classical solution $h_{\eps}$ by $h$ whenever
there is no chance of confusion.

Multiplying (\ref{D:1r'}) by $(G^{(\alpha)}_{\varepsilon} (h))'$,
integrating over $Q_T$, taking $\delta \to 0$,
and using the periodic boundary conditions
(\ref{D:2r'}), yields
\begin{align}
& \label{E:b1}
\int\limits_{\Omega} {G^{(\alpha)}_{\varepsilon}(h(x,T))\,dx} + a_0
\iint\limits_{Q_T} {h^{\alpha}h^2_{xx}\,dx dt}  +
a_0 \tfrac{\alpha(1 - \alpha)}{3} \iint\limits_{Q_T} {h^{\alpha -2
}h^4_{x}\,dx dt} \\
& =
\int\limits_{\Omega}
{G^{(\alpha)}_{\varepsilon}(h_{0\varepsilon})\,dx} + a_1
\iint\limits_{Q_T} {h^{\alpha} h^2_{x}\,dx dt}
- \tfrac{a_2}{\alpha+1} \iint\limits_{Q_T} {h^{\alpha+1} w'' \,dx dt}.
\notag
\end{align}

\textbf{Case 1: $0 < \alpha < 1$.} The coefficient
multiplying $\iint h^{\alpha-2} h_x^4$ in (\ref{E:b1}) is positive and
can therefore be used to control the term
$\iint h^\alpha h_x^2$ on the
right--hand side of (\ref{E:b1}).  Specifically, using the Cauchy-Schwartz
inequality and the Cauchy inequality,
\begin{equation} \label{E:b2}
a_1\iint\limits_{Q_T} {h^{\alpha} h^2_{x}}\; dx dt \leqslant
\tfrac{a_0 \alpha(1 - \alpha)}{6}\iint\limits_{Q_T} {h^{\alpha-2}
h^4_{x}}\; dx dt + \tfrac{3a_1^2}{2a_0\alpha(1 -
\alpha)}\iint\limits_{Q_T} {h^{\alpha + 2} }\;dx dt.
\end{equation}
Using the bound (\ref{E:b2}) in (\ref{E:b1}) yields
\begin{align}
& \label{E:b3}
\int\limits_{\Omega} {G^{(\alpha)}_{\varepsilon}(h(x,T))\,dx} + a_0
\iint\limits_{Q_T} {h^{\alpha}h^2_{xx}\,dx dt}  +
a_0 \tfrac{\alpha(1 - \alpha)}{6} \iint\limits_{Q_T} {h^{\alpha -2
}h^4_{x}\,dx dt} \\
& \leq \int\limits_{\Omega}
{G^{(\alpha)}_{\varepsilon}(h_{0\varepsilon})\,dx} + \tfrac{3
a_1^2}{2 a_0 \alpha (1-\alpha)} \iint\limits_{Q_T} {h^{\alpha+2}\;
dx dt } + \tfrac{|a_2|}{\alpha+1} \| w'' \|_\infty
\iint\limits_{Q_T} {h^{\alpha+1}\; dx dt }.
\end{align}
By  (\ref{LpH1}),
\begin{equation}\label{E:b8}
\iint\limits_{Q_T} {h^{\alpha + 1} \,dx dt} \leqslant
b_4  \int \limits_0^T {\left(
\int\limits_{\Omega} {h^2_{x} \,dx} \right)^{\tfrac{\alpha}{2}+\tfrac{1}{2}} dt}
+ b_5 M_\eps^{\alpha+1} \; T,
\end{equation}
\begin{equation}\label{E:b9}
\iint\limits_{Q_T} {h^{\alpha + 2} \,dx dt} \leqslant b_4
 \int \limits_0^T {\left(
\int\limits_{\Omega} {h^2_{x} \,dx} \right)^{\tfrac{\alpha}{2}+1}
dt} + b_5 M_\eps^{\alpha+2} \; T.
\end{equation}
Using (\ref{E:b8}) and (\ref{E:b9}) in (\ref{E:b3}) yields
\begin{align}
&\notag
\int\limits_{\Omega} {G^{(\alpha)}_{\varepsilon}(h(x,T))\,dx} + a_0
\iint\limits_{Q_T} {h^{\alpha}h^2_{xx}\,dx dt}  \\
& \notag \hspace{.2in} +
a_0 \tfrac{\alpha(1 - \alpha)}{6} \iint\limits_{Q_T} {h^{\alpha -2
}h^4_{x}\,dx dt}  \leq
\int\limits_{\Omega}
{G^{(\alpha)}_{\varepsilon}(h_{0\varepsilon})\,dx} \\
& \hspace{.2in} \notag +
d_1 \int\limits_0^T \left( \int\limits_\Omega h_x^2 \; dx \right)^{\tfrac{\alpha}{2} + 1} \; dt
+
d_2 \int\limits_0^T \left( \int\limits_\Omega h_x^2 \; dx \right)^{\tfrac{\alpha}{2} + \tfrac{1}{2}} \; dt
+d_3 \; T \\
& \hspace{.2in} \label{E:b9a}
\leq \int\limits_{\Omega}
{G^{(\alpha)}_{\varepsilon}(h_{0\varepsilon})\,dx}
+ d_4 \int\limits_0^T \max\left\{ 1, \left( \int\limits_\Omega h_x^2 \; dx \right)^{\tfrac{\alpha}{2}+1}
\right\} \; dt
\end{align}
where
$$
d_1 = b_4 \; \tfrac{3 a_1^2}{2 a_0 \alpha (1-\alpha)}, \quad
d_2 = b_4 \; \tfrac{|a_2| \| w'' \|_\infty}{1+\alpha}
$$
$$
d_3 = b_5 \; \left(
 \tfrac{3 a_1^2}{2 a_0 \alpha (1-\alpha)} \; M_\eps^{\alpha+2}
 +  \tfrac{|a_2| \| w'' \|_\infty}{1+\alpha} \; M_\eps^{\alpha+1}\right), \quad
 d_4 = d_1+d_2+d_3
$$

Taking $\delta \to 0$ in (\ref{D:a5}) yields
\begin{align} \label{E:b9b}
& \int\limits_{\Omega} {h_x^2 \,dx} + a_0
\iint\limits_{Q_T} {f_{\varepsilon}(h) h^2_{xxx}\,dx dt} \\
& \notag
\hspace{.2in}
\leq \int\limits_{\Omega} {h_{0\varepsilon, x}^2\,dx} + \tfrac{2 a_1^2}{a_0}
\iint\limits_{Q_T} { h^3 h^2_{x} \,dx dt} +
\tfrac{2 a_2^2}{a_0} \| w' \|^2_\infty \iint\limits_{Q_T} { h^3
\,dx dt}
\end{align}
Applying the Cauchy-Schwartz inequality and the Cauchy inequality,
\begin{align} \label{E:b9c}
& \tfrac{2 a_1^2}{a_0} \iint\limits_{Q_T} h^3 h_x^2 \; dx dt \\
& \hspace{.2in}  \notag
\leq
\tfrac{a_0 \alpha(1-\alpha)}{6} \iint\limits_{Q_T} h^{\alpha-2} h_x^4 \; dx dt
+
\tfrac{6 a_1^4}{ a_0^3 \alpha(1-\alpha)} \iint\limits_{Q_T} h^{8-\alpha} \; dx dt
\end{align}
By  (\ref{LpH1}),
\begin{equation}\label{E:b8'}
\iint\limits_{Q_T} {h^{8-\alpha} \,dx dt} \leqslant
b_4  \int \limits_0^T {\left(
\int\limits_{\Omega} {h^2_{x} \,dx} \right)^{4-\tfrac{\alpha}{2}} dt}
+ b_5 M_\eps^{8-\alpha} \; T,
\end{equation}
\begin{equation}\label{E:b9'}
\iint\limits_{Q_T} {h^{3} \,dx dt} \leqslant b_4
 \int \limits_0^T {\left(
\int\limits_{\Omega} {h^2_{x} \,dx} \right)^{\tfrac{3}{2}}
dt} + b_5 M_\eps^{3} \; T.
\end{equation}
Using (\ref{E:b9c}), (\ref{E:b8'}) and (\ref{E:b9'}) in (\ref{E:b9b}) yields
\begin{align} \notag
& \int\limits_{\Omega} {h_x^2 \,dx} + a_0
\iint\limits_{Q_T} {f_{\varepsilon}(h) h^2_{xxx}\,dx dt}
\leqslant \int\limits_{\Omega} {h_{0\varepsilon, x}^2\,dx} \\
& \notag
\hspace{.2in}
+ \tfrac{a_0 \alpha(1-\alpha)}{6}
\iint\limits_{Q_T} { h^{\alpha-2} h^4_{x} \,dx dt} +
d_5 \int\limits_0^T \left( \int\limits_\Omega h_x^2 \; dx \right)^{4-\tfrac{\alpha}{2}} \; dt \\
&  \hspace{.2in} \label{E:b9d}
+
d_6 \int\limits_0^T \left( \int\limits_\Omega h_x^2 \; dx \right)^{\tfrac{3}{2}} \; dt
+d_7 \; T
\leq  \int\limits_{\Omega} {h_{0\varepsilon, x}^2\,dx} \\
& \hspace{.2in} \notag
+ \tfrac{a_0 \alpha(1-\alpha)}{6}
\iint\limits_{Q_T} { h^{\alpha-2} h^4_{x} \,dx dt} +
d_8 \int\limits_0^T \max\left\{
1, \left( \int\limits_\Omega h_x^2 \; dx
\right)^{4-\tfrac{\alpha}{2}}
\right\} \; dt
\end{align}
where
$$
d_5 = \tfrac{6 a_1^4}{ a_0^3 \alpha(1-\alpha)}  \; b_4, \qquad
d_6 = \tfrac{2 a_2^2}{a_0} \| w' \|_\infty^2  \; b_4
$$
$$
d_7 = b_5 \left( \tfrac{6 a_1^4}{a_0^3 \alpha(1-\alpha)} \; M_\eps^{8-\alpha}
+ \tfrac{2 a_2^2}{a_0} \| w' \|_\infty^2  \; M_\eps^3
\right), \qquad
d_8 = d_5 + d_6 + d_7
$$
Add
$$
\int\limits_{\Omega} {G^{(\alpha)}_{\varepsilon}(h(x,T))\,dx}
$$
to both sides of (\ref{E:b9d}) and add
$$
a_0 \iint\limits_{Q_T} h^\alpha h_{xx}^2 \; dx dt
$$
to the right--hand side of the resulting inequality.    Using
(\ref{E:b9a}) yields
\begin{align} \label{E:b9e}
& \int\limits_{\Omega} {h_x^2(x,T) \,dx}
+ \int\limits_{\Omega} {G^{(\alpha)}_{\varepsilon}(h(x,T))\,dx}
+ a_0
\iint\limits_{Q_T} {f_{\varepsilon}(h) h^2_{xxx}\,dx dt} \\
& \hspace{.2in} \notag
\leq
 \int\limits_{\Omega} {h_{0\varepsilon, x}^2\,dx}
+ \int\limits_\Omega G^{(\alpha)}_{\varepsilon}(h_{0\varepsilon}) \,dx
+ d_4 \int\limits_0^T \max\left\{ 1, \left( \int\limits_\Omega h_x^2 \; dx \right)^{\tfrac{\alpha}{2}+1} \right\} \\
& \hspace{.4in} \notag
+ d_8 \int\limits_0^T \max\left\{
1, \left( \int\limits_\Omega h_x^2 \; dx
\right)^{4-\tfrac{\alpha}{2}}
\right\} \; dt \\
& \hspace{.2in}
\leq \notag
 \int\limits_{\Omega} {h_{0\varepsilon, x}^2\,dx}
+ \int\limits_\Omega G^{(\alpha)}_{\varepsilon}(h_{0\varepsilon}) \,dx
+ d_9
\int\limits_0^T \max\left\{
1, \left( \int\limits_\Omega h_x^2 \; dx
\right)^{4-\tfrac{\alpha}{2}}
\right\}
\end{align}
where $d_9 = d_4 + d_8$.

Applying the nonlinear Gr\"onwall lemma \cite{Bihari} to
$$
v(T) \leq v(0) + d_9 \int\limits_0^T \max\{1,v^{4-\alpha/2}(t)\} \;
dt$$ with $v(T) = \int h_x^2(x,T) + \: G_{\eps}^{(\alpha)}(h(x,T))
\; dx$ yields
$$
v(T) \leq
\begin{cases}
\begin{cases}
v(0) + d_9 t & \mbox{if} \quad T < T_0 := \tfrac{1-v(0)}{d_9} \\
\left( 1 - \tfrac{6-\alpha}{2} d_9 (T-T_0) \right)^{-\tfrac{2}{6-\alpha}} & \mbox{if} \quad T \geq T_0
\end{cases}
& \mbox{if} \quad v(0) < 1 \\
\left( v(0)^{-\tfrac{6-\alpha}{2}} - \tfrac{6-\alpha}{2} d_9 T \right)^{-\tfrac{2}{6-\alpha}} & \mbox{if} \quad v(0) \geq 1
\end{cases}
$$
From this,
\begin{align} \label{bound1}
& \int\limits_\Omega (h_x^2(x,T) + G_{\eps}^{(\alpha)}(h(x,T))) \; dx \\
& \hspace{.4in} \leq 4^{\tfrac{1}{6-\alpha}} \max\left\{ 1,
\int\limits_\Omega ({h_{0,\eps}}_x^2(x) +
G_{\eps}^{(\alpha)}(h_{0,\eps}(x))) \; dx \right\} = K_\eps < \infty
\notag
\end{align}
for all
$$
0 \leq T \leq T_{\eps,loc}^{(\alpha)} := \tfrac{1}{d_9(6-\alpha)}
\min\left\{ 1,   \left(  \int\limits_\Omega ({h_{0,\eps}}_x^2(x) +
G_{\eps}^{(\alpha)}(h_{0,\eps}(x))) \;
dx\right)^{-\tfrac{6-\alpha}{2}} \right\}.
$$
The bound (\ref{bound1}) holds for all $0 < \eps < \eps_0$ where
$\eps_0$ is from Lemma \ref{MainAE} and for all $t \leq
\min\{T_{loc},T_{\eps,loc}^{(\alpha)}\}$ where $T_{loc}$ is from Lemma
\ref{MainAE}.

Using the $\delta \to 0, \eps \to 0$ convergence of the initial data
and the choice of $\theta \in (0,2/5)$
(see (\ref{D:inreg})) as well as
the assumption that the initial data $h_0$ has
finite $\alpha$-entropy (\ref{finite_alpha_ent}),
the times $T_{\eps, loc}^{(\alpha)}$ converge to a positive limit and
the upper bound $K_\eps$ in
(\ref{bound1}) can be taken finite and independent of $\epsilon$.
(We refer the reader to the end
of the proof of Lemma \ref{F:local_BF} in this Appendix for a fuller explanation
of a similar case.)
Therefore there exists $\eps_0^{(\alpha)}$ and $K$ such that
the bound (\ref{bound1}) holds for all $0 < \eps < \eps_0^{(\alpha)}$
with $K$ replacing $K_\eps$ and for all
\begin{equation} \label{Tloc_alpha_is}
0 \leq t \leq T_{loc}^{(\alpha)} := \min\left\{T_{loc},
\tfrac{9}{10} \lim_{\eps \to 0} T_{\eps,loc}^{(\alpha)}
\right\}
\end{equation}
where $T_{loc}$ is the time from Lemma \ref{MainAE}.  Also, without
loss of generality, $\eps_0^{(\alpha)}$ can be taken to be less than
or equal to the $\eps_0$ from Lemma \ref{MainAE}.

Using the uniform bound on $\int h_x^2$ that (\ref{bound1})
provides, one can find a uniform-in-$\eps$ bound for the
right-hand-side of (\ref{E:b9a}) yielding the desired bound
\begin{equation} \label{alpha_bound1}
\int\limits_{\Omega} {G^{(\alpha)}_{\varepsilon}(h(x,T))\,dx} + a_0
\iint\limits_{Q_T} {h^{\alpha}h^2_{xx}}\,dx dt
 +
a_0 \tfrac{\alpha(1 - \alpha)}{6} \iint\limits_{Q_T} {h^{\alpha -2
}h^4_{x}\,dx dt}  \leq K_1
\end{equation}
which holds for all $0 <\eps < \eps_0^{(\alpha)}$ and all
$0 \leq T \leq T_{loc}^{(\alpha)}$.

It remains to argue that (\ref{alpha_bound1}) implies that
for all $0 < \eps < \eps_0^{(\alpha)}$ that
$h_\eps^{\alpha/2 + 1}$ and $h_\eps^{\alpha/4 + 1/2}$ are contained
in
balls in $L^{2}(0, T; H^2(\Omega))$
and $L^{2}(0, T; W^1_4(\Omega))$ respectively.  It suffices to show that
$$
\iint\limits_{Q_T}
\left(
h_\eps^{\alpha/2 + 1}\right)^2_{xx} \; dx dt \leq K, \qquad
\iint\limits_{Q_T}
\left(
h_\eps^{\alpha/4 + 1/2}\right)^4_x \; dx dt \leq K
$$
for some $K$ that is independent of $\eps$ and $T$.
The integral $\iint (h_\eps^{\alpha/2+1})_{xx}^2$ is a linear
combination of
$\iint h^{\alpha-2} h_x^4$,
$\iint h^{\alpha-1} h_x^2 h_{xx}$, and
$\iint h^{\alpha} h_{xx}^2$.
Integration by parts and the periodic boundary conditions imply
\begin{equation} \label{calc_ident}
\tfrac{1 - \alpha}{3} \iint\limits_{Q_T} {h^{\alpha -2
}h^4_{x}\,dx dt} = \iint\limits_{Q_T} {h^{\alpha -1 }h^2_{x}
h_{xx}\,dx dt}
\end{equation}
Hence $\iint (h_\eps^{\alpha/2+1})_{xx}^2$ is a linear combination of
$\iint h^{\alpha-2} h_x^4$, and $\iint h^{\alpha} h_{xx}^2$.  By
(\ref{alpha_bound1}), the two integrals are uniformly bounded independent
of $\eps$ and $T$ hence $\iint (h_\eps^{\alpha/2+1})_{xx}^2$ is as
well, yielding the first part of (\ref{E:b13}).

The uniform bound of $\iint (h_\eps^{\alpha/4 + 1/2})_x^4$ follows
immediately from the uniform bound of $\iint h^{\alpha-2} h_x^4$,
yielding the second part of (\ref{E:b13}).

\textbf{Case 2: $-\tfrac{1}{2} < \alpha < 0$.}  For $\alpha < 0$ the
coefficient multiplying $\iint h^{\alpha-2} h_x^4$ in (\ref{E:b1})
is negative.  However, we will show that if $\alpha > -1/2$ then one
can replace this coefficient with a positive coefficient while also
controlling the term $\iint h^\alpha h_x^2$ on the right-hand side
of (\ref{E:b1}).

Applying the Cauchy-Schwartz inequality to the right--hand side of
(\ref{calc_ident}), dividing by $\sqrt{\iint h^{\alpha-2} h_x^4}$,
and squaring both sides of the resulting inequality yields
\begin{equation}\label{E:b4}
\iint\limits_{Q_T} {h^{\alpha -2 }h^4_{x}\,dx dt} \leq \tfrac{9}{(1
- \alpha)^2} \iint\limits_{Q_T} {h^{\alpha} h^2_{xx}\,dx dt} \qquad
\forall \alpha < 1.
\end{equation}
Using (\ref{E:b4}) in (\ref{E:b1}) yields
\begin{align}
& \label{E:b5}
\int\limits_{\Omega} {G^{(\alpha)}_{\varepsilon}(h(x,T))\,dx} + a_0
\tfrac{1+2\alpha}{1-\alpha}\iint\limits_{Q_T} {h^{\alpha}h^2_{xx}\,dx dt}  \\
& \hspace{.2in} \notag
\leq
\int\limits_{\Omega}
{G^{(\alpha)}_{\varepsilon}(h_{0\varepsilon})\,dx} + a_1
\iint\limits_{Q_T} {h^{\alpha} h^2_{x}\,dx dt}
+ \tfrac{|a_2|}{\alpha+1} \| w'' \|_\infty  \iint\limits_{Q_T} {h^{\alpha+1}  \,dx dt}.
\notag
\end{align}
Note that if $\alpha > -1/2$ then all the terms on the left--hand side
of (\ref{E:b5}) are positive.  We now control the term
$\iint h^\alpha h_x^2$ on the right-hand side of
(\ref{E:b5}).

By integration by parts and the periodic boundary conditions
\begin{equation}  \label{calc_ident2}
\iint\limits_{Q_T} h^\alpha h_x^2 \; dx dt
= - \tfrac{1}{1+\alpha} \iint\limits_{Q_T} h^{\alpha+1} h_{xx} \; dx dt
\end{equation}
Applying the Cauchy-Schwartz inequality and the Cauchy inequality
to (\ref{calc_ident2}) yields
\begin{equation} \label{E:b6}
a_1 \iint\limits_{Q_T} h^\alpha h_x^2\; dx dt \leq \tfrac{a_0 (1+2
\alpha)}{2(1-\alpha)}  \iint\limits_{Q_T} h^\alpha h_{xx}^2 \ \; dx
dt + \tfrac{a_1^2 (1-\alpha)}{2 a_0 (1+2 \alpha) (1+\alpha)^2}
\iint\limits_{Q_T} h^{\alpha+2}\; dx dt
\end{equation}
Using inequality (\ref{E:b6}) in (\ref{E:b5}) yields
\begin{align}
& \label{E:b6a}
\int\limits_{\Omega} {G^{(\alpha)}_{\varepsilon}(h(x,T))\,dx} + a_0
\tfrac{1+2\alpha}{2(1-\alpha)}\iint\limits_{Q_T} {h^{\alpha}h^2_{xx}\,dx dt}  \\
& \notag \leq \int\limits_{\Omega}
{G^{(\alpha)}_{\varepsilon}(h_{0\varepsilon})\,dx} + \tfrac{a_1^2
(1-\alpha)}{2 a_0 (1+2 \alpha) (1+\alpha)^2} \iint\limits_{Q_T}
h^{\alpha+2}\; dx dt + \tfrac{|a_2|}{\alpha+1} \| w'' \|_\infty
\iint\limits_{Q_T} {h^{\alpha+1} }\; dx dt. \notag
\end{align}
Adding
$$
\tfrac{a_0 (1+2 \alpha) (1-\alpha)}{36}  \iint\limits_{Q_T}  h^{\alpha-2} h_x^4 \; dx dt
$$
to both sides of (\ref{E:b6a}) and using the inequality (\ref{E:b4}) yields
\begin{align}
& \label{E:b6b}
\int\limits_{\Omega} {G^{(\alpha)}_{\varepsilon}(h(x,T))\,dx} + a_0
\tfrac{(1+2\alpha)}{4(1-\alpha)}\iint\limits_{Q_T} {h^{\alpha}h^2_{xx}\,dx dt}  \\
& \hspace{.2in} \notag + \tfrac{a_0(1+2\alpha)(1-\alpha)}{36}
\iint\limits_{Q_T} h^{\alpha-2} h_x^4 \; dx dt \leq
\int\limits_{\Omega}
{G^{(\alpha)}_{\varepsilon}(h_{0\varepsilon})\,dx} \\
& \hspace{.2in} \notag + \tfrac{a_1^2 (1-\alpha)}{2 a_0 (1+2 \alpha)
(1+\alpha)^2} \iint\limits_{Q_T} h^{\alpha+2}\; dx dt +
\tfrac{|a_2|}{\alpha+1} \| w'' \|_\infty \iint\limits_{Q_T}
{h^{\alpha+1} \; dx dt}. \notag
\end{align}
Using (\ref{E:b8}) and (\ref{E:b9}) in (\ref{E:b6b}) yields
\begin{align}
&\notag
\int\limits_{\Omega} {G^{(\alpha)}_{\varepsilon}(h(x,T))\,dx}
+ \tfrac{a_0(1+2\alpha)}{4(1-\alpha)}
\iint\limits_{Q_T} {h^{\alpha}h^2_{xx}\,dx dt}  \\
& \notag \hspace{.2in} +
\tfrac{a_0(1+2\alpha)(1-\alpha)}{36}
\iint\limits_{Q_T} {h^{\alpha -2
}h^4_{x}\,dx dt}  \leq
\int\limits_{\Omega}
{G^{(\alpha)}_{\varepsilon}(h_{0\varepsilon})\,dx} \\
& \hspace{.2in} \notag +
e_1 \int\limits_0^T \left( \int\limits_\Omega h_x^2 \; dx \right)^{\tfrac{\alpha}{2} + 1} \; dt
+
e_2 \int\limits_0^T \left( \int\limits_\Omega h_x^2 \; dx \right)^{\tfrac{\alpha}{2} + \tfrac{1}{2}} \; dt
+e_3 \; T \\
& \hspace{.2in} \label{E:b9a'}
\leq \int\limits_{\Omega}
{G^{(\alpha)}_{\varepsilon}(h_{0\varepsilon})\,dx}
+ e_4 \int\limits_0^T \max\left\{ 1, \left( \int\limits_\Omega h_x^2 \; dx \right)^{\tfrac{\alpha}{2}+1}
\right\} \; dt
\end{align}
where
$$
e_1 = \tfrac{a_1^2(1-\alpha)}{2a_0(1+2\alpha)(1+\alpha)^2}\; b_4,
\qquad e_2 = \tfrac{|a_2|}{\alpha+1} \|w''\|_\infty \; b_4,
$$
$$
e_3 = b_5 \left(
\tfrac{a_1^2(1-\alpha)}{2a_0(1+2\alpha)(1+\alpha)^2}\;
M_\eps^{\alpha+2} + \tfrac{|a_2|}{\alpha+1} \|w''\|_\infty \;
M_\eps^{\alpha+1} \right),
$$
and $e_4 = e_1 + e_2 + e_3$.

Recall the bound (\ref{E:b9b}):
\begin{align} \label{E:b9b'}
& \int\limits_{\Omega} {h_x^2 \,dx} + a_0
\iint\limits_{Q_T} {f_{\varepsilon}(h) h^2_{xxx}\,dx dt} \\
& \notag
\hspace{.2in}
\leq \int\limits_{\Omega} {h_{0\varepsilon, x}^2\,dx} + \tfrac{2 a_1^2}{a_0}
\iint\limits_{Q_T} { h^3 h^2_{x} \,dx dt} +
\tfrac{2 a_2^2}{a_0} \| w' \|^2_\infty \iint\limits_{Q_T} { h^3
\,dx dt}.
\end{align}
As before, by the Cauchy-Schwartz inequality and the Cauchy
inequality,
\begin{align} \label{E:b9c'}
& \tfrac{2 a_1^2}{a_0} \iint\limits_{Q_T} h^3 h_x^2 \; dx dt \leq
\tfrac{a_0(1+2\alpha)(1-\alpha)}{36}
\iint\limits_{Q_T} h^{\alpha-2} h_x^4 \; dx dt\\
& \hspace{1.4in}  \notag + \tfrac{36 a_1^4}{a_0^3
(1+2\alpha)(1-\alpha)} \iint\limits_{Q_T} h^{8-\alpha} \; dx dt
\end{align}
Using (\ref{E:b9c'}), (\ref{E:b8'}), and (\ref{E:b9'}) in
(\ref{E:b9b'}) yields
\begin{align} \notag
& \int\limits_{\Omega} {h_x^2 \,dx} + a_0
\iint\limits_{Q_T} {f_{\varepsilon}(h) h^2_{xxx}\,dx dt}
\leqslant \int\limits_{\Omega} {h_{0\varepsilon, x}^2\,dx} \\
& \notag
\hspace{.2in}
+ \tfrac{a_0 (1+2\alpha)(1-\alpha)}{36}
\iint\limits_{Q_T} { h^{\alpha-2} h^4_{x} \,dx dt} +
e_5 \int\limits_0^T \left( \int\limits_\Omega h_x^2 \; dx \right)^{4-\tfrac{\alpha}{2}} \; dt \\
&  \hspace{.2in} \label{E:b9d'}
+
e_6 \int\limits_0^T \left( \int\limits_\Omega h_x^2 \; dx \right)^{\tfrac{3}{2}} \; dt
+e_7 \; T
\leq  \int\limits_{\Omega} {h_{0\varepsilon, x}^2\,dx} \\
& \hspace{.2in} \notag
+ \tfrac{a_0 (1+2\alpha)(1-\alpha)}{36}
\iint\limits_{Q_T} { h^{\alpha-2} h^4_{x} \,dx dt} +
e_8 \int\limits_0^T \max\left\{
1, \left( \int\limits_\Omega h_x^2 \; dx
\right)^{4-\tfrac{\alpha}{2}}
\right\} \; dt
\end{align}
where
$$
e_5 = \tfrac{36 a_1^4}{a_0^3 (1+2\alpha)(1-\alpha)} \; b_4, \qquad
e_6 = \tfrac{2 a_2^2}{a_0} \| w' \|^2_\infty \; b_4
$$
$$
e_7 = b_5 \left( \tfrac{36 a_1^4}{a_0^3 (1+2\alpha)(1-\alpha)} \;
M_\eps^{8-\alpha} + \tfrac{2 a_2^2}{a_0} \| w' \|^2_\infty \;
M_\eps^{3} \right),\quad e_8 = e_5 + e_6 + e_7.
$$
Add
$$
\int\limits_{\Omega} {G^{(\alpha)}_{\varepsilon}(h(x,T))\,dx}
$$
to both sides of (\ref{E:b9d'}) and add
$$
\tfrac{a_0(1+2\alpha)}{4(1-\alpha)}
\iint\limits_{Q_T} h^\alpha h_{xx}^2 \; dx dt
$$
to the right--hand side of the resulting inequality. Just as
(\ref{E:b9a}) and (\ref{E:b9b}) yielded (\ref{E:b9e}),
(\ref{E:b9a'}) combined with the above inequality yields
\begin{align} \label{E:b9e'}
& \int\limits_{\Omega} {h_x^2(x,T) \,dx}
+ \int\limits_{\Omega} {G^{(\alpha)}_{\varepsilon}(h(x,T))\,dx}
+ a_0
\iint\limits_{Q_T} {f_{\varepsilon}(h) h^2_{xxx}\,dx dt} \\
& \hspace{.2in} \notag
\leq
 \int\limits_{\Omega} {h_{0\varepsilon, x}^2\,dx}
+ \int\limits_\Omega G^{(\alpha)}_{\varepsilon}(h_{0\varepsilon}) \,dx
+ e_4 \int\limits_0^T \max\left\{ 1, \left( \int\limits_\Omega h_x^2 \; dx \right)^{\tfrac{\alpha}{2}+1} \right\} \\
& \hspace{.4in} \notag
+ e_8 \int\limits_0^T \max\left\{
1, \left( \int\limits_\Omega h_x^2 \; dx
\right)^{4-\tfrac{\alpha}{2}}
\right\} \; dt \\
& \hspace{.2in}
\leq \notag
 \int\limits_{\Omega} {h_{0\varepsilon, x}^2\,dx}
+ \int\limits_\Omega G^{(\alpha)}_{\varepsilon}(h_{0\varepsilon}) \,dx
+ e_9
\int\limits_0^T \max\left\{
1, \left( \int\limits_\Omega h_x^2 \; dx
\right)^{4-\tfrac{\alpha}{2}}
\right\}
\end{align}
where $e_9 = e_4 + e_8$.

The rest of the proof now continues as in the $0 < \alpha < 1$ case.
Specifically, one finds a bound
\begin{align} \label{bound2}
& \int\limits_\Omega (h_x^2(x,T) + G_{\eps}^{(\alpha)}(h(x,T))) \; dx \\
& \hspace{.4in} \leq 4^{\tfrac{1}{6-\alpha}} \max\left\{ 1,
\int\limits_\Omega ({h_{0,\eps,x}}^2(x) +
G_{\eps}^{(\alpha)}(h_{0,\eps}(x))) \; dx \right\} = K_\eps < \infty
\notag
\end{align}
for all
$$
0 \leq T \leq T_{\eps,loc}^{(\alpha)} := \tfrac{1}{e_9(6-\alpha)}
\min\left\{ 1,   \left(  \int\limits_\Omega ({h_{0,\eps,x}}^2(x) +
G_{\eps}^{(\alpha)}(h_{0,\eps}(x))) \;
dx\right)^{-\tfrac{6-\alpha}{2}} \right\}.
$$
The time $T_{loc}^{(\alpha)}$ is defined as in (\ref{Tloc_alpha_is})
and the uniform bound (\ref{bound2}) used to bound the right hand side
of (\ref{E:b9a'}) yields the desired bound
\begin{align} \notag
& \int\limits_{\Omega} {G^{(\alpha)}_{\varepsilon}(h(x,T))\,dx}
+ \tfrac{a_0(1+2\alpha)}{4(1-\alpha)}
\iint\limits_{Q_T} {h^{\alpha}h^2_{xx}\,dx dt}  \\
& \hspace{1.2in} +
\tfrac{a_0(1+2\alpha)(1-\alpha)}{36}
\iint\limits_{Q_T} {h^{\alpha -2
}h^4_{x}\,dx dt}  \leq K_2 \label{alpha_bound2}
\end{align}

\end{proof}

\begin{proof}[Proof of Lemma~\ref{F:local_BF}]
In the following, we denote the
 positive, classical solution $h_\eps$
constructed in Lemma \ref{preC:Th1} by $h$ (whenever there is no
chance of confusion).

Recall the entropy function $G_{\delta \eps}(z)$ defined by (\ref{D:reg2}).
Multiplying
(\ref{D:1r'}) by $\xi(x) G'_{\delta \varepsilon}(h_{\delta \eps})$,
taking $\delta \to 0$, and integrating over $Q_T$ yields
\begin{align}
& \notag \int\limits_{\Omega} {\xi(x) G_{\varepsilon}(h(x,T))\,dx}  -
\int\limits_{\Omega} {\xi(x)
G_{\varepsilon}(h_{0,\varepsilon})\,dx} = - a_3\iint\limits_{Q_T}
{\xi(x) G'_{\varepsilon}(h)
h_{x} \,dx dt} \\
& \notag \hspace{.3in}
+ \iint\limits_{Q_T} {f_{\varepsilon}(h)(a_0 h_{xxx} +
a_1 h_x + a_2 w') (\xi' G'_{\varepsilon}(h) + \xi G''_{\varepsilon}(h) h_x) \,dx dt}  \\
&\notag =
a_3\iint\limits_{Q_T} {\xi'G_{\varepsilon}(h) \,dx dt} +
\iint\limits_{Q_T} {\xi' f_{\varepsilon}(h)G'_{\varepsilon}(h)
(a_0 h_{xxx} +
a_1 h_x + a_2 w')\,dx dt}  \\
&  \hspace{.3in} +
\iint\limits_{Q_T} {\xi  h_x (a_0 h_{xxx} + a_1 h_x + a_2 w')
\,dx dt} =: I_{1} + I_{2} + I_{3} .
\label{F:pp1}
\end{align}

We now bound the terms $I_2$ and $I_3$.  First,
\begin{align*}
& I_{2}  = - a_0 \iint\limits_{Q_T} {\xi''
f_{\varepsilon}(h)G'_{\varepsilon}(h) h_{xx}\,dx dt} - a_0
\iint\limits_{Q_T} {\xi' (1 +
f'_{\varepsilon}(h)G'_{\varepsilon}(h))h_x h_{xx}\; dx dt}\\
& \hspace{.5in} - a_1 \iint\limits_{Q_T} {\xi'' F_{\varepsilon}(h)
\,dx dt} + a_2 \iint\limits_{Q_T} {\xi'
f_{\varepsilon}(h)G'_{\varepsilon}(h) w'(x) \,dx dt} \\
& \hspace{.2in}
= - a_0 \iint\limits_{Q_T} {\xi''
f_{\varepsilon}(h)G'_{\varepsilon}(h) h_{xx}\,dx dt} +
\tfrac{a_0}{2} \iint\limits_{Q_T} {\xi'' h_x^2\,dx dt}
\\
& \hspace{.5in}
- a_0 \iint\limits_{Q_T} {\xi'
f'_{\varepsilon}(h)G'_{\varepsilon}(h)h_x h_{xx}\,dx dt} - a_1
\iint\limits_{Q_T} {\xi'' F_{\varepsilon}(h) \,dx
dt}  \\
& \hspace{.5in}
+ a_2 \iint\limits_{Q_T} {\xi' f_{\varepsilon}(h)G'_{\varepsilon}(h)
w'(x) \,dx dt} ,
\end{align*}
where $F_{\varepsilon}(z) := \int \limits_0^z
{f_{\varepsilon}(s)G'_{\varepsilon}(s)\,ds} =
-z^2/4 + \eps/6 \; z - \eps^2/6 \; \ln(z+\eps)$.

One easily finds that for all $\eps > 0$ and all $z \geq 0$
$$
|f_{\varepsilon}(z)G_{\varepsilon}'(z)|
= \tfrac{z}{6} \tfrac{2 \eps + 3 z}{\eps + z}
\leqslant \tfrac{1}{2}z, \
|f_{\varepsilon}'(z)G_{\varepsilon}'(z)|
= \tfrac{1}{6} \tfrac{(4 \eps + 3 z)(2 \eps + 3 z)}{(\eps + z)^2}
\leqslant 2.
$$
To bound $|F_\eps(z)|$, a limit on the possible values of $\eps$ is
assumed.  Specifically, if $0 < \eps < (\sqrt{33}-3)/4$ then for all $z \geq 0$
$$
|F_\eps(z)| \leq \tfrac{1}{2} z^2 + \tfrac{3}{5}.
$$
Using these bounds, and recalling $\xi = \zeta^4$, we bound $|I_2|$:
\begin{align} \notag
& |I_2| \leqslant \tfrac{a_0}{2} \iint\limits_{Q_T} \left[ 12
\zeta^2 \zeta_x^2 + 4 \zeta^3 | \zeta_{xx} | \right] h \, | h_{xx}
|\; dx dt
+ 8 a_0 \iint\limits_{Q_T} \zeta^3 | \zeta_x | | h_x| |h_{xx}|\; dx dt \\
& \hspace{.4in} \notag + 2 a_0 \iint\limits_{Q_T} \left[ 3 \zeta^2
\zeta_x^2 + \zeta^3 | \zeta_{xx}| \right] h_x^2 \; dx dt
+ |a_1| \iint\limits_{Q_T} \left[ 6 \zeta^2 \zeta_x^2  + 2 \zeta^3 | \zeta_{xx}| \right] h^2\; dx dt \\
& \hspace{.4in}  \label{F:pp4} + 2 |a_2| \| w' \|_{\infty}
\iint\limits_{Q_T} \zeta^3 |\zeta_x| h \; dx dt+ \tfrac{3}{5} |a_1|
\iint\limits_{Q_T} | \xi^{\prime \prime} |\; dx dt
\end{align}
for all $0 < \eps  < (\sqrt{33}-3)/4$.  The first two integrals on the right hand side of
(\ref{F:pp4}) are bounded via the Cauchy-Schwartz inequality followed by
the Cauchy inequality  yielding:
\begin{align}
& \label{F:CS1} 6 a_0 \iint\limits_{Q_T} \zeta^2 \zeta_x^2 h
|h_{xx}|\; dx dt
\leq \tfrac{a_0}{6} \iint\limits_{Q_T} \zeta^4 h_{xx}^2 + 54 a_0 \iint\limits_{Q_T} \zeta_x^4 h^2\; dx dt \\
&  \label{F:CS2} 2 a_0 \iint\limits_{Q_T} \zeta^3 | \zeta_{xx} | h |
h_{xx} |\; dx dt
\leq \tfrac{a_0}{6} \iint\limits_{Q_T} \zeta^4 h_{xx}^2 + 6 a_0 \iint\limits_{Q_T} \zeta^2 \zeta_{xx}^2 h^2 \; dx dt\\
&  \label{F:CS3} 8 a_0 \iint\limits_{Q_T} \zeta^3 | \zeta_x | |h_x|
|h_{xx}|\; dx dt \leq \tfrac{a_0}{6} \iint\limits_{Q_T} \zeta^4
h_{xx}^2 \; dx dt + 96 a_0 \iint\limits_{Q_T} \zeta^2 \zeta_{xx}^2
h_x^2\; dx dt
\end{align}
Using (\ref{F:CS1})--(\ref{F:CS3}) in (\ref{F:pp4}) yields
\begin{align}
& \notag | I_2 | \leq \tfrac{a_0}{2} \iint\limits_{Q_T} \zeta^4
h_{xx}^2\; dx dt + c_1 \iint\limits_{Q_T} \left[ \zeta^2 \zeta_x^2 +
\zeta^3 | \zeta_{xx}|
+ \zeta_x^4  + \zeta^2 \zeta_{xx}^2 \right] \left( h^2 + h_x^2 \right)\; dx dt \\
& \label{I2_bound} \hspace{.4in} + 2 |a_2| \| w' \|_\infty
\iint\limits_{Q_T} \zeta^3 | \zeta_x | h \; dx dt+ \tfrac{3}{5}
|a_1| \iint\limits_{Q_T} |  \xi^{\prime \prime} |\; dx dt
\end{align}
where $c_1 = \max\{ 102 a_0, 6 |a_1| \}$ and
$0 < \eps  < (\sqrt{33}-3)/4$.  We now consider the term $I_3$ in (\ref{F:pp1}):
$$
I_3 = \iint\limits_{Q_T} \xi h_x \left[ a_0 h_{xxx} + a_1 h_x  + a_2 w' \right] \; dx dt.
$$
Integrating by parts,
\begin{align}
& \notag I_3 + a_0 \iint\limits_{Q_T} \xi h_{xx}^2 \; dx dt=
\tfrac{a_0}{2} \iint\limits_{Q_T} \xi^{\prime \prime} h_x^2 \; dx
dt+ a_1 \iint\limits_{Q_T} \xi h_x^2 \; dx dt
 - a_2 \iint\limits_{Q_T} \left(w' \xi' + w^{\prime \prime} \xi \right) h \; dx dt\\
 & \hspace{.4in} \notag
 \leq \tfrac{a_0}{2} \iint\limits_{Q_T} \left[ 12 \zeta^2 \zeta_x^2 + 4 \zeta^3 |\zeta_{xx}| \right]
 h_x^2 \; dx dt
 + |a_1| \iint\limits_{Q_T} \zeta^4 h_x^2 \; dx dt\\
 & \hspace{.7in} \notag
 +  4 |a_2|  \| w' \|_\infty \iint\limits_{Q_T} \zeta^3 | \zeta_x | h\; dx dt
 + |a_2 | \| w^{\prime \prime} \|_\infty \iint\limits_{Q_T} \zeta^4 h \; dx dt\\
 & \hspace{.4in} \notag
 \leq c_2 \iint\limits_{Q_T} \left[ \zeta^2 \zeta_x^2 + \zeta^3 | \zeta_{xx} | + \zeta^4 \right] h_x^2 \; dx dt\\
 & \hspace{.7in} \label{I3_bound}
 + 4 |a_2| \left( \| w' \|_\infty + \| w^{\prime \prime} \|_\infty \right) \iint\limits_{Q_T} \left( \zeta^3 | \zeta_x | + \zeta^4 \right) h\; dx dt
\end{align}
where $c_2 = \max\{ 6 a_0, |a_1| \}$.

Using bounds (\ref{I2_bound}) and (\ref{I3_bound}) in (\ref{F:pp1}),
\begin{align}
& \notag
\int\limits_{\Omega} {\zeta^4 G_{\varepsilon}(h(x,T))\,dx} +
\tfrac{a_0}{2} \iint\limits_{Q_T} {\zeta^4 h^2_{xx} \,dx dt}
\leq \int\limits_{\Omega} {\zeta^4 G_{\varepsilon}(h_{0 \eps}(x)) \,dx}  \\
& \hspace{.4in} \notag
+ c_3 \| \zeta^2 \zeta_x^2 + \zeta^3 |\zeta_{xx}| + \zeta_x^4 +
\zeta^2 \zeta_{xx}^2 + \zeta^4 \|_\infty
\iint\limits_{Q_T}(h^2 + h_x^2) \; dx dt \\
& \hspace{.4in} \notag + 6 |a_2| \left( \|w' \|_\infty + \|
w^{\prime \prime} \|_\infty \right)
 \| \zeta^3 | \zeta_x | + \zeta^4 \|_\infty  \iint\limits_{Q_T} h \; dx dt\\
& \hspace{.4in} \label{F:pp6} + 4 |a_3| \| \zeta^3|\zeta_x \|_\infty
\iint\limits_{Q_T} G_{\eps}(h)\; dx dt + \tfrac{3}{5} |a_1|
\iint\limits_{Q_T} | \xi^{\prime \prime} |\; dx dt
\end{align}
where $c_3 = \max\{ 102 a_0, 6 |a_1| \}$
and $0 < \eps  < (\sqrt{33}-3)/4$.
Taking $\delta = 0$ in the a priori estimate (\ref{D:a13''}), using
conservation of mass, and assuming $0 < \eps < \min\{\eps_0, (\sqrt{33}-3)/4\}$
where $\eps_0$ is from Lemma \ref{MainAE}, we deduce from  (\ref{F:pp6}) that
for all $T \in [0,T_{loc}]$
\begin{equation} \label{local_BF}
\int\limits_{\Omega} {\xi G_{\varepsilon}(h_\eps(x,T))\,dx}
\leqslant \int\limits_{\Omega} {\xi G_{\varepsilon}(h_{0 \eps})\,dx}
+ C
\end{equation}
where $C > 0$ is independent of $\varepsilon > 0$.  $C$ is determined by
$a_0$, $a_1$, $a_2$, $a_3$, $T_{loc}$, $\int h_0$, $| \Omega |$,
$w'$, $w^{\prime \prime}$, and on $\zeta$ and its derivatives.
Note that in going from (\ref{F:pp6}) to (\ref{local_BF}) we dropped the
$a_0/2 \iint \xi h_{\eps,xx}^2 $ term because
this term is not needed in the rest of the proof.

We
now argue that the $\eps \to 0$ limit of the right-hand side of (\ref{local_BF}) is
finite and bounded by $K$, allowing us to apply Fatou's lemma to the
left-hand side of (\ref{local_BF}), concluding
$$
\int\limits_\Omega \xi(x) \; G_0(h(x,T)) \; dx = \frac{1}{2}
\int\limits_\Omega \xi(x) \; \tfrac{1}{h(x,T)} \; dx \leq K < \infty
$$
for every $T \in [0,T_{loc}]$, as desired.  (Note that in taking $\eps \to 0$ we
will choose the exact same sequence $\eps_k$ that was used to construct
the weak solution $h$ of Theorem \ref{C:Th1}.  Also, in applying Fatou's
lemma we used the fact that $\{ h = 0 \}$ having measure zero in $Q_{T_{loc}}$
implies $\{ h(\cdot,T) \}$ has measure zero in $\Omega$.)

It suffices to show that
$\int \xi G_\eps(h_{0 \eps}) \to \int \xi G_0(h_0) < \infty$ as $\eps \to 0$.  This uses the
Lebesgue Dominated Convergence Theorem.  First, note that
$$
G_\eps(z) = \tfrac{1}{2 z} + \tfrac{\eps}{6 z^2} = G_0(z) +  \tfrac{\eps}{6 z^2},
$$
hence if $h_0(x) > 0$ then
$$
 G_\eps(h_{0 \eps}(x)) =
 \tfrac{1}{2(h_0(x) + \eps^\theta)} + \tfrac{\eps}{6 (h_0(x) + \eps^\theta)^2}
\leq  \tfrac{1}{2 h_0(x)} + \tfrac{\eps^{1-2\theta}}{6}.
$$
Because $h_0$ has finite entropy
($\int G_0(h_0) < \infty$)
it is positive almost everywhere in $\Omega$.
Using this and the fact that $\theta$ was chosen so that
$\theta < 2/5 < 1/2$, we have $|\xi(x) \, G_\eps(h_{0 \eps}(x))|
\leq \xi(x) (G_0(h_0(x)) + c) \leq C (G_0(h_0(x)) + c )$ almost everywhere in $x$ and for all $\eps < \eps_0$.  The
dominating function is in $L^1$, because $h_0$ has finite
entropy.

It remains to show pointwise convergence $\xi(x) G_\eps(h_{0 \eps}(x)) \to
\xi(x) G_0(h_0(x))$ almost everywhere in $x$:
\begin{align*}
& \left| G_\eps(h_{0 \eps}(x))  - G_0(h_0(x)) \right|
\leq
\left| G_\eps(h_{0 \eps}(x))  - G_0(h_{0 \eps}(x)) \right| \\
& \hspace{.4in} +
\left| G_0(h_{0 \eps}(x))  - G_0(h_0(x)) \right|
=
\tfrac{\eps}{6 h_{0 \eps}(x)^2} +
\left| G_0(h_{0 \eps}(x))  - G_0(h_0(x)) \right| \\
& \hspace{.4in}
\leq
\tfrac{\eps^{1-2\theta}}{6}
+\left| G_0(h_{0 \eps}(x))  - G_0(h_0(x)) \right|
\end{align*}
As before, the term $\eps^{1-2\theta}/6$ goes to zero by the choice of $\theta$.
The term $\left| G_0(h_{0 \eps}(x))  - G_0(h_0(x)) \right|$ goes to zero for almost
every $x \in \Omega$ because $G_0(z)$ is continuous everywhere except
at $z=0$.

\end{proof}

\section{Results used from functional analysis}

\begin{lemma}\label{A.1}
(\cite{Lions}) Suppose that $X,\ Y,$  and  $Z$ are Banach spaces,
$X \!\Subset \!Y \subset \! Z$, and $X$ and $Z$ are reflexive.
Then the embedding $ \{ u \in \! L^{p_0 } (0,T;$ $X):$ $\partial
_t u \in L^{p_1 } (0,T;Z),1 < p_i < \infty ,i = 0,1 \} \Subset
L^{p_0 } (0,T;Y)$ is compact.
\end{lemma}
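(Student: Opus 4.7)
The statement is the classical Aubin--Lions compactness lemma, and my plan is to prove it via Ehrling's interpolation inequality combined with a Fr\'echet--Kolmogorov criterion for compactness in Bochner spaces. Denote $W := \{u \in L^{p_0}(0,T;X) : \partial_t u \in L^{p_1}(0,T;Z)\}$ and let $\{u_n\} \subset W$ be a bounded sequence; the goal is to extract a subsequence that converges strongly in $L^{p_0}(0,T;Y)$. By reflexivity of $X$ and $Z$, first pass to a subsequence $u_{n_k} \rightharpoonup u$ weakly in $L^{p_0}(0,T;X)$ and $\partial_t u_{n_k} \rightharpoonup \partial_t u$ weakly in $L^{p_1}(0,T;Z)$.

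The heart of the argument is a uniform equicontinuity estimate for time translations. Since $X \Subset Y \hookrightarrow Z$, Ehrling's lemma provides, for every $\eta > 0$, a constant $C_\eta>0$ such that
\begin{equation*}
\|v\|_Y \leq \eta \|v\|_X + C_\eta \|v\|_Z \qquad \forall \, v \in X.
\end{equation*}
For $h>0$, write $u_n(t+h)-u_n(t) = \int_t^{t+h} \partial_t u_n(\tau)\,d\tau$ in $Z$; H\"older's inequality in $\tau$ gives $\|u_n(t+h)-u_n(t)\|_Z \leq h^{1/q_1}\|\partial_t u_n\|_{L^{p_1}(0,T;Z)}$ with $1/p_1+1/q_1=1$. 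Substituting into Ehrling, raising to the $p_0$--th power, and integrating in $t$ produces
\begin{equation*}
\int_0^{T-h}\|u_n(t+h)-u_n(t)\|_Y^{p_0}\,dt \leq C_1 \eta^{p_0} + C_2 C_\eta^{p_0} h^{p_0/q_1},
\end{equation*}
with $C_1, C_2$ depending only on the uniform $W$--bound on $\{u_n\}$. Choosing $\eta$ first and then $h$ small makes the left-hand side vanish uniformly in $n$ as $h \to 0$.

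This $Y$--valued time-translation equicontinuity, together with the pointwise-in-time compactness supplied by the embedding $X \Subset Y$, satisfies the hypotheses of a Fr\'echet--Kolmogorov (Simon-type) criterion in $L^{p_0}(0,T;Y)$, yielding relative compactness of $\{u_{n_k}\}$ there. Any strongly convergent cluster point must coincide with the weak limit $u$ by uniqueness, so the full subsequence converges strongly to $u$ in $L^{p_0}(0,T;Y)$. The main technical obstacle is arranging the interaction between the two exponents $p_0$ and $p_1$ in the translation estimate: Ehrling is a pointwise-in-time inequality, whereas the $Z$--translation bound lives naturally in $L^{p_1}$, so one must carefully track the exponent $p_0/q_1$ and use boundedness of $(0,T)$ together with H\"older when $p_1 \neq p_0$ to guarantee the bound is finite and tends to zero as $h\to 0$.
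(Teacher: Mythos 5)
Your proof is essentially correct. Note that the paper gives no proof of this lemma at all: it appears in the appendix of auxiliary functional-analysis results and is simply cited from \cite{Lions}. Your argument is the standard \emph{Simon-type} proof (Ehrling plus the Fr\'echet--Kolmogorov translation criterion for Bochner spaces), which is the cleanest modern route. Lions' original proof, by contrast, proceeds by contradiction: assuming $u_m\rightharpoonup 0$ in $W$ but $\|u_m\|_{L^{p_0}(0,T;Y)}\geq c>0$, he applies Ehrling to push the obstruction down into $L^{p_0}(0,T;Z)$, and then shows separately (via an Arzel\`a--Ascoli-type argument exploiting the $L^{p_1}$ bound on $\partial_t u_m$) that $u_m\to 0$ strongly in $C([0,T];Z)$, contradicting that lower bound. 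Your approach gets the same conclusion constructively, without contradiction, by directly verifying Simon's two hypotheses. One small point of phrasing: ``pointwise-in-time compactness'' is not quite what Simon's criterion asks for. The actual hypothesis is that the time-averaged sets $\{\int_a^b u_n(t)\,dt : n\}$ are relatively compact in $Y$ for each $0<a<b<T$; this does follow from the uniform $L^{p_0}(0,T;X)$ bound together with H\"older and $X\Subset Y$, so there is no gap, but it would be worth stating the hypothesis in its correct averaged form rather than pointwise. Otherwise the exponent bookkeeping ($p_0/q_1$ in the translation bound, the $2^{p_0-1}$ convexity constant, the factor $T$) is all in order, and the final identification of the strong limit with the weak limit by uniqueness is correct.
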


\begin{lemma}\label{A.2}
(\cite{Sim}) Suppose that $X,\ Y,$  and  $Z$ are Banach spaces and
$X \!\Subset \!Y\hspace{-0.2cm} \subset \! Z$. Then the embedding
$ \{u \in L^\infty (0,T;X):\partial _t u \in L^p (0,T;Z)$, $p > 1
\} \Subset C(0,T;Y)$ is compact.
\end{lemma}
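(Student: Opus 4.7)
The plan is to adapt the standard Simon compactness argument (a generalization of Aubin--Lions) to the $C(0,T;Y)$ setting. Given a sequence $\{u_n\}$ bounded in the space of interest, i.e.\ with $\|u_n\|_{L^\infty(0,T;X)} + \|\partial_t u_n\|_{L^p(0,T;Z)} \leq M$, I want to extract a subsequence converging in $C(0,T;Y)$. The strategy is to verify the two hypotheses of the Banach-space-valued Arzel\`a--Ascoli theorem --- pointwise precompactness in $Y$ and equicontinuity into $Y$ --- and then conclude by a diagonal extraction.

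First I would observe that $\partial_t u_n \in L^p(0,T;Z)$ with $p>1$ implies $u_n \in W^{1,p}(0,T;Z) \hookrightarrow C([0,T];Z)$, so each $u_n$ has a canonical continuous representative into $Z$. Moreover, $u_n(t) \in X$ with $\|u_n(t)\|_X \leq M$ for a.e.\ $t$, and since $X \Subset Y$, the set $\{u_n(t)\}_{n\in\mathbb N}$ is relatively compact in $Y$ at each such $t$. This gives the pointwise precompactness requirement.

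The equicontinuity estimate rests on the Ehrling--Lions interpolation inequality: since $X \Subset Y \subset Z$, for every $\eta > 0$ there exists $C_\eta$ with $\|v\|_Y \leq \eta \|v\|_X + C_\eta \|v\|_Z$ for all $v \in X$. Combining this with the H\"older-in-time bound
\begin{equation*}
\|u_n(t)-u_n(s)\|_Z \;\leq\; |t-s|^{1-1/p}\, \|\partial_t u_n\|_{L^p(0,T;Z)},
\end{equation*}
applied to $v = u_n(t)-u_n(s)$, yields $\|u_n(t)-u_n(s)\|_Y \leq 2\eta M + C_\eta M\,|t-s|^{1-1/p}$. Choosing $\eta$ small and then restricting $|t-s|$ delivers equicontinuity of $\{u_n\}$ as a family of $Y$-valued functions. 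Fixing a countable dense set $\{t_k\} \subset [0,T]$, a diagonal extraction using pointwise precompactness at each $t_k$ yields a subsequence converging in $Y$ at every $t_k$, and equicontinuity then upgrades this convergence to uniform convergence on $[0,T]$.

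The main technical obstacle is handling the $X$-bound on the exceptional null set of times: the Ehrling estimate must be applicable at \emph{every} $t\in[0,T]$, not just a.e.\ $t$. I would resolve this by working with the $C([0,T];Z)$ representative and using weak-$\star$ lower semicontinuity of the $X$-norm: for any $t\in[0,T]$, pick a sequence $t_j\to t$ of ``good'' times where $\|u_n(t_j)\|_X\le M$ holds, and observe that $u_n(t_j)\to u_n(t)$ in $Z$ forces $u_n(t)\in X$ with the same bound. With the $X$-bound valid uniformly in $t$, the Ehrling interpolation inequality applies pointwise and the Arzel\`a--Ascoli conclusion follows as sketched.
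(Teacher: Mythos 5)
The paper itself does not prove Lemma \ref{A.2}; it is stated as a citation to Simon's compactness theorem, so there is no in-paper argument to compare against. Your proposed strategy --- Banach-valued Arzel\`a--Ascoli, with pointwise precompactness from $X\Subset Y$ and equicontinuity from Ehrling's interpolation inequality together with the H\"older-in-time estimate $\|u_n(t)-u_n(s)\|_Z\le|t-s|^{1-1/p}\|\partial_t u_n\|_{L^p(0,T;Z)}$ --- is exactly the standard route, and the first two paragraphs are sound.

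The gap is in your handling of the exceptional null set. You claim that if $t_j\to t$, $\|u_n(t_j)\|_X\le M$, and $u_n(t_j)\to u_n(t)$ in $Z$, then $u_n(t)\in X$ with $\|u_n(t)\|_X\le M$. That step requires bounded balls of $X$ to be weak-$\star$ (or weakly) sequentially compact, together with a compatibility between weak-$\star$ limits in $X$ and strong limits in $Z$ --- in practice, $X$ reflexive or the dual of a separable space. But the lemma as stated only assumes $X$, $Y$, $Z$ are Banach: take $X=C[0,1]$, $Z=L^1(0,1)$ and a sequence of uniformly bounded continuous functions converging in $L^1$ to a discontinuous limit; the hypotheses of your claim hold but the conclusion fails. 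So the last paragraph, as written, does not close the argument under the stated hypotheses. The clean fix is to avoid asserting $u_n(t)\in X$ at the bad times altogether: let $E_n$ be the full-measure set where $\|u_n(t)\|_X\le M$. Ehrling plus the H\"older bound give, for $s,t\in E_n$, the modulus of continuity $\|u_n(t)-u_n(s)\|_Y\le 2\eta M+C_\eta M|t-s|^{1-1/p}$; since $E_n$ is dense, $u_n|_{E_n}$ is uniformly continuous into $Y$ and so extends to a continuous $Y$-valued map on $[0,T]$, which must coincide with the $C([0,T];Z)$ representative (two $Z$-continuous maps agreeing a.e.\ agree everywhere). The equicontinuity estimate then holds for \emph{all} $s,t$ by density, and pointwise precompactness at a bad $t$ follows by picking a nearby $t'\in\bigcap_n E_n$ and observing that a set uniformly close to a precompact set is precompact. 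This removes the need for any reflexivity of $X$. (In the paper's actual application $X=H^2(\Omega)$ is reflexive, so your argument would in fact go through there, but it does not prove the lemma in the generality stated.)
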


\bibliographystyle{plain}
\bibliography{ChugPughTarRep}

\end{document}